


\documentclass[reqno,a4paper]{article}
\setlength{\textwidth}{6.5in}
\setlength{\oddsidemargin}{-0.15in}
\setlength{\evensidemargin}{-0.15in}
\setlength{\textheight}{8.7in}
\setlength{\topmargin}{-0.1in}
\setlength{\parindent}{15pt}
\setlength{\parskip}{5pt}


\usepackage{amssymb,amsmath,epsfig,graphics,psfrag,graphicx,cite,color,subfigure}
\definecolor{lightblue}{rgb}{0.22,0.45,0.70}
\definecolor{darkgreen}{rgb}{0.22,0.75,0.40}
\definecolor{mygray}{rgb}{0.7,0.7,0.7}
\usepackage[colorlinks=true,breaklinks=true,linkcolor=lightblue,citecolor=lightblue]{hyperref}
\usepackage{float}


\numberwithin{equation}{section}
\numberwithin{figure}{section}
\numberwithin{table}{section}
\newcommand\cero{\boldsymbol{0}}

\newcommand\bL{\boldsymbol{L}}

\newcommand\bU{\mathbf{U}}
\newcommand\mV{\mathbb{V}}

\newcommand\ff{\boldsymbol{f}}
\newcommand\bg{\boldsymbol{g}}

\newcommand\bphi{\boldsymbol{\varphi}}

\newcommand\bu{\boldsymbol{u}}
\newcommand\bv{\boldsymbol{v}}
\newcommand\bx{\boldsymbol{x}}

\newcommand\bbbeta{\boldsymbol{\beta}}

\newcommand\cP{\mathcal{P}}
\newcommand\cT{\mathcal{T}}

\newcommand\cE{{\mathcal{E}}}

\newcommand\N{\mathbb{N}}
\newcommand\R{\mathbb{R}}

\newcommand{\norm}[1]{\left\|#1\right\|}

\newcommand{\set}[1]{\left\{#1\right\}}

\renewcommand\O{\Omega}
\newcommand\G{\Gamma}

\renewcommand\H{\mathrm{H}}
\renewcommand\L{\mathrm{L}}
\newcommand\Q{\mathrm{Q}}
\newcommand\Z{\mathbf{Z}}

\newcommand\LO{\L^2(\O)}
\newcommand\LOO{\L_{0}^2(\O)}

\newcommand\vdiv{\mathop{\mathrm{div}}\nolimits}

\newcommand\HsO{\H^s(\O)}

\newcommand\HusO{\H^{1+s}(\O)}

\newcommand\bp{\boldsymbol{p}}

\newcommand\ba{\boldsymbol{a}}

\newcommand\bn{\boldsymbol{n}}
\newcommand\bt{\boldsymbol{t}}

\newcommand\bomega{\boldsymbol{\omega}}
\newcommand\btheta{\boldsymbol{\theta}}

\newcommand\bxi{\boldsymbol{\xi}}
\newcommand\betta{\boldsymbol{\eta}}

\newtheorem{remark}{Remark}[section]
\newtheorem{lemma}{Lemma}[section]
\newtheorem{theorem}{Theorem}[section]

\newcommand{\dx}{\,\mbox{d}x}

\newcommand{\ds}{\,\mbox{d}s}
\newcommand\curl{\mathop{\mathbf{curl}}\nolimits}
\newcommand\rot{\mathop{\mathrm{rot}}\nolimits}

\newcommand{\supp}{\mathrm{supp}}

\def\CE{{\mathcal E}}
\def\CT{{\mathcal T}}
\def\CR{{\mathcal N}}

\newenvironment{proof}{\noindent{\it Proof.}}{\hfill$\square$}

\allowdisplaybreaks

\begin{document}

\title{Numerical analysis of a new formulation
for the Oseen equations \\ in terms of vorticity and Bernoulli pressure}

\author{Ver\'onica Anaya\thanks{GIMNAP, Departamento de Matem\'atica,
Universidad del B\'io-B\'io, Concepci\'on, Chile; and
Centro de Investigaci\'on en Ingenier\'ia Matem\'atica
(CI$^2$MA), Universidad de Concepci\'on, Concepci\'on, Chile. E-mail:
{\tt vanaya@ubiobio.cl}.},\quad 
David Mora\thanks{GIMNAP, Departamento de Matem\'atica, Universidad
del B\'io-B\'io, Concepci\'on, Chile; and
Centro de Investigaci\'on en Ingenier\'ia Matem\'atica
(CI$^2$MA), Universidad de Concepci\'on, Concepci\'on, Chile.
E-mail: {\tt dmora@ubiobio.cl}.},\quad 
Amiya K. Pani\thanks{Department of Mathematics,
Indian Institute of 
Technology Bombay, Powai, Mumbai 400076, India.
E-mail: {\tt akp@math.iitb.ac.in}},\quad 
Ricardo Ruiz-Baier\thanks{School of Mathematical Sciences,
Monash University, 9 Rainforest Walk, Clayton VIC 3800, Australia. E-mail:
{\tt ricardo.ruizbaier@monash.edu}.}}

\date{July 29, 2020}

\maketitle

\begin{abstract}
A variational formulation is introduced for the Oseen equations written in terms of vor\-ti\-city and Bernoulli pressure. The velocity is fully decoupled using the momentum balance equation, and it is later recovered by a post-process. A finite element method is also proposed, consisting in equal-order N\'ed\'elec finite elements and piecewise continuous polynomials for the vorticity and the Bernoulli pressure, respectively. The {\it a priori}  error analysis is carried out in the $\L^2$-norm for vorticity, pressure, and velocity; under a smallness assumption either on the convecting velocity, or on the mesh parameter.  Furthermore, an {\it a posteriori} error estimator is designed and its robustness and efficiency are studied using weighted norms. Finally, a set of numerical examples in 2D and 3D is given, where the error indicator serves to guide adaptive mesh refinement. These tests illustrate the behaviour of the new formulation in typical flow conditions, and they also confirm the theoretical findings.
\end{abstract}

\noindent
{\bf Key words}: Oseen equations; vorticity-based formulation; 
finite element methods; {\it a priori} error bounds;
{\it a posteriori} error estimation; numerical examples.

\smallskip\noindent
{\bf Mathematics subject classifications (2000)}:  65N30, 65N12, 76D07, 65N15

\maketitle
\section{Introduction}

In this paper, we propose a reformulation of the Oseen equations using only vorticity and Ber\-nou\-lli pressure. A similar splitting of the unknowns has been recently proposed in \cite{anaya17b} for the Brinkman equations. We extend those results for the Oseen problem and propose a residual-based {\it a posteriori} error estimator whose properties are studied using a weighted energy norm, as well as the $\L^2$-norm. 

There is an abundant body of literature dealing with numerical methods for incompressible flow problems using the vor\-ti\-city as a dedicated unknown. These include  spectral elements \cite{amoura07,azaiez06}, stabilised and least-squares schemes \cite{amara07,bochev99}, and mixed finite elements \cite{anaya15a,dubois03,duan03,salaun15,gatica11}, to name a few. 
Works specifically devoted to the analysis of numerical schemes for the Oseen equations in terms of vorticity include the non-conforming exponentially accurate least-squares spectral  method for Oseen equations  proposed in \cite{mohapatra16}, the least-squares method proposed in \cite{tsai05} for Oseen and Navier-Stokes equations, the family of vorticity-based first-order Oseen-type systems studied in \cite{chang07}, the enhanced accuracy formulation in terms of velocity-vorticity-helicity investigated in \cite{benzi12}, and the recent mixed and DG discretisations for Oseen's problem in velocity-vorticity-pressure form, proposed in  \cite{anaya18}.

The method advocated in this article focuses on N\'ed\'elec elements of order $k\ge 1$ for the vorticity and piecewise continuous polynomials of degree $k$, for the Bernoulli pressure. 
An abridged version of the analysis for this formulation has been recently advanced in \cite{altamirano}. In contrast, here we provide details on the 
{\it a priori} error estimates rigorously derived for the finite element discretisations in the $\L^2$-norm under enough regularity and under  smallness assumption on the mesh parameter.
Furthermore, we prove error estimates for two post-processes for the velocity
field in the $\L^2$-norm. The first one is similar to the one used in \cite{anaya17b} for Brinkman equations, which exploits 
the momentum equation and direct differentiation of the
discrete vorticity and Bernoulli pressure. For the second post-process
we solve an additional elliptic problem emanating from the constitutive equation defining vorticity, and it uses the continuity equation and 
the discrete vorticity appears on the right-hand side. This problem is discretised with, e.g., 
piecewise linear and continuous polynomials.

On the other hand, we address the construction of residual
based  {\it a posteriori} error estimators which are reliable and efficient. 
Adaptive mesh refinement strategies based on {\it a posteriori} error indicators
have a significant role in computing numerical solutions to partial differential equations, and this 
is of high importance in the particular context of incompressible flow problems. Robust and efficient error estimators permit to restore the optimal convergence of finite element methods, specifically when complex geometries or singular coefficients are present (which could otherwise lead to non-convergence or to the generation of spurious solutions) \cite{rv-1996}, and they can provide substantial enhancement to the accuracy of the approximations \cite{ain-ode}. {\it A posteriori} error analyses for vorticity-based equations are already available from the literature (see, e.g., \cite{agr2017,amara07,anaya15a,cgot2016}), 
but considering formulations substantially different to the one we put forward here. Our analysis complements these works by establishing upper and lower bounds in different 
norms, and using an estimator that is scaled according to the expected regularity of the solutions (which in turn also depends on the 
regularity of the domain).
Reliability of the {\it a posteriori} error estimator is proved
in the $\L^2$-norm, and local efficiency of the 
error indicator is shown by using a standard technique based on bubble functions.

We further remark that the present method has the advantage of direct computation of vorticity, and it is relatively competitive in terms of computational cost (for instance when compared with the classical MINI-element, or Taylor-Hood schemes). The type of vorticity-based formulations we use here can be of additional physical relevance in scenarios where boundary effects are critical, for example as in those discussed in \cite{davies01,olshanskii18}.  Moreover, the corresponding analysis is fairly simple, only requiring classical tools for elliptic problems. 

We have structured the contents of the paper in the following manner. We present the model problem as well as the two-field weak formulation and its solvability analysis in Section~\ref{sec:model}. The finite element discretisation is constructed in Section~\ref{sec:FE}, where we also derive the stability,
convergence bounds and we present two post-processes for the velocity field. Section~\ref{sec:apost} is devoted to the analysis of reliability and efficiency of a weighted residual-based {\it a posteriori} error indicator, and we close in Section~\ref{sec:numer} with a set of numerical tests that illustrate the properties of the proposed numerical scheme in a variety of scenarios, including validation of the adaptive refinement procedure guided by the error estimator.

\section{The continuous formulation of the Oseen problem}\label{sec:model}
This section deals with some preliminaries, variational formulation for the Oseen problem given in terms of vorticity and Bernoulli pressure  and its well-posedness.

\subsection{Preliminaries}
Let $\O\subset\R^3$ be a bounded and connected Lipschitz domain with 
its boundary $\G=\partial\O$ and 
further, let $\bn$ be 
the outward unit vector normal to $\G$. The starting point of our investigation  is the following form of the equations, that use velocity, vorticity, and Bernoulli pressure (see, e.g., \cite{anaya18,ORsisc02})
\begin{align}\label{NS-cont-vort}\begin{split}
\sigma\bu-\nu\Delta\bu+\curl\bu\times\bbbeta +\nabla p & =
  \ff \qquad \mbox{ in } \O, \\
  \vdiv\bu & =  0 \qquad\, \mbox{ in } \O,
 \end{split}\end{align}
where $\nu>0$ is the kinematic viscosity, and a linearisation and backward Euler time stepping 
explain the terms $\sigma>0$ as the inverse of the time step, and $\bbbeta$ as 
 an adequate approximation of velocity (representing for example the velocity 
 at a previous time step). The Bernoulli pressure relates to the true fluid pressure $P$ 
 as follows $p := P + \frac{1}{2}\bu\cdot\bu-\lambda$, where $\lambda$ is the mean value of $\frac{1}{2}\bu\cdot\bu$.

The structure of \eqref{NS-cont-vort} suggests to introduce the rescaled
vorticity vector $\bomega:=\sqrt{\nu}\curl\bu$ as a new
unknown. Thus, the Oseen problem can be formulated as: Find $\bu,\bomega,p$ such that
\begin{align}\label{eq:momentum}
 \sigma\bu +\sqrt{\nu}\curl \bomega + \nu^{-1/2} \bomega \times {\boldsymbol\beta}
 +\nabla p\,=&\,  \ff & \mbox{ in } \O, \\
  \bomega-\sqrt{\nu}\curl\bu= &\,\cero & \mbox{ in } \O,\label{eq:constitutive} \\
  \vdiv\bu  = &\, 0 & \mbox{ in } \O,  \label{eq:mass}\\
  \bu  = &\,\bg &    \mbox{ on } \G.\label{eq:bc}
 \end{align}
The vector of external forces $\ff$ absorbs the contributions
related to previous time steps and to the fixed states in
the linearisation procedure that leads  from Navier-Stokes to Oseen equations. 
Along with to the Dirichlet boundary condition
for the velocity on $\G$, 
the additional condition $(p,1)_{\O,0} = 0$ is required to have uniqueness of the Bernoulli pressure. 
We will also assume that the data are
regular enough: $\ff\in\L^2(\O)^3$ and $\bbbeta\in \L^{\infty}(\O)^3.$
However, we do not restrict the behaviour of $\vdiv{\boldsymbol\beta}$. 
For different assumptions on $\bbbeta$ we refer to, e.g., 
\cite{barrios,Cockburn_JSC13,Cockburn_Mcomp04,tsai05}.

For the sake of conciseness of the presentation,
the analysis in the sequel is carried out for homogeneous boundary conditions on velocity, 
i.e. $\bg=\cero$ on $\Gamma$. Non homogeneous boundary data, as well as mixed boundary conditions, 
will be considered in the numerical examples in Section~\ref{sec:numer}, below. 

\subsection{Variational formulation}
For any $s\geq 0$, the symbol
$\norm{\cdot}_{s,\O}$ denotes
the norm of the Hilbert  Sobolev spaces $\HsO$ or
$\HsO^3$, with the usual convention $\H^0(\O):=\LO$. For $s\geq 0$, we recall
the definition of the space
$$\H^s(\curl;\O):=\set{\btheta\in\HsO^3:\ \curl\btheta\in\HsO^3},$$ endowed with the norm
 $\norm{\btheta}_{\H^s(\curl;\O)}=\Big(\norm{\btheta}_{s,\O}^2+\norm{\curl\btheta}^2_{s,\O}\Big)^{1/2}$, and will denote
$\H(\curl;\O)=\H^0(\curl;\O)$. Finally, $c$ and
$C$, with subscripts, tildes, or hats, will represent a generic constant
independent of the mesh parameter $h$.

We denote the function spaces  
\begin{gather*}
\Z:=\H(\curl;\O), \qquad \text{and} \qquad
\Q:=\H^{1}(\O)\cap\LOO,
\end{gather*}
which are endowed, respectively, with the following norms  
$$\Vert\btheta\Vert_{\Z}:=\left(\Vert\btheta\Vert_{0,\O}^2+
\nu\Vert\curl\btheta\Vert_{0,\O}^2\right)^{1/2} \qquad \mbox{and}\qquad 
\Vert q\Vert_{\Q}:= ( \Vert q\Vert^2_{0,\O}+
\Vert \nabla q\Vert^2_{0,\O} )^{1/2}
.$$
Here, $\LOO$ represents the set of $\L^2(\O)$ functions with mean value zero.

In addition, for sake of the subsequent analysis,
it is convenient to introduce the following space 
$$ \mV := \{ (\btheta,q)\in \L^2(\Omega)^3\times \L^2_0(\Omega): \ \sqrt{\nu} \curl \btheta + \nabla q \in \L^2(\Omega)^3\}.$$
\begin{lemma}
The space $\mV$ endowed with the norm defined by 
\begin{equation}\label{eq:normV}
\|(\btheta,q)\|_{\mV} : = \Big(\sigma\Vert\btheta\Vert_{0,\O}^2+\Vert\sqrt{\nu}\curl\btheta+\nabla q\Vert_{0,\O}^2
+\Vert q\Vert_{0,\O}^2\Big)^{1/2}
\end{equation}
is a Hilbert space. 
\end{lemma}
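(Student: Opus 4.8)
The plan is to verify the two standard Hilbert-space axioms: that $\|\cdot\|_{\mV}$ is indeed a norm on $\mV$ (coming from an inner product), and that $\mV$ is complete with respect to it. The first part is essentially bookkeeping. The quantity in \eqref{eq:normV} is the square root of
\[
\langle (\btheta,q),(\bxi,r)\rangle_{\mV} := \sigma(\btheta,\bxi)_{0,\O} + (\sqrt{\nu}\curl\btheta+\nabla q,\sqrt{\nu}\curl\bxi+\nabla r)_{0,\O} + (q,r)_{0,\O},
\]
which is manifestly a symmetric bilinear form; it is positive semidefinite because it is a sum of (scaled) $\L^2$ inner products of the components $\btheta$, $\sqrt{\nu}\curl\btheta+\nabla q$, and $q$; and it is definite because $\|(\btheta,q)\|_{\mV}=0$ forces $\btheta=\cero$ in $\L^2(\O)^3$ and $q=0$ in $\L^2_0(\O)$ (the middle term being then automatically zero). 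Since $\sigma,\nu>0$, no degeneracy arises. So $\mV$ is a pre-Hilbert space.

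The substantive part is completeness. I would take a Cauchy sequence $\{(\btheta_n,q_n)\}\subset\mV$. By definition of the norm, $\{\btheta_n\}$ is Cauchy in $\L^2(\O)^3$, $\{q_n\}$ is Cauchy in $\L^2_0(\O)$, and $\{\sqrt{\nu}\curl\btheta_n+\nabla q_n\}$ is Cauchy in $\L^2(\O)^3$. By completeness of $\L^2$, there are limits $\btheta\in\L^2(\O)^3$, $q\in\L^2_0(\O)$ (the subspace of mean-zero functions is closed in $\L^2(\O)$), and $\bz\in\L^2(\O)^3$ with $\btheta_n\to\btheta$, $q_n\to q$, $\sqrt{\nu}\curl\btheta_n+\nabla q_n\to\bz$, all in $\L^2$. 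The remaining point is to identify $\bz$ with $\sqrt{\nu}\curl\btheta+\nabla q$ and thereby conclude $(\btheta,q)\in\mV$ with $(\btheta_n,q_n)\to(\btheta,q)$ in $\|\cdot\|_{\mV}$. For this I pass to the limit in the sense of distributions: for any $\bphi\in\cC_0^\infty(\O)^3$,
\[
(\sqrt{\nu}\curl\btheta_n+\nabla q_n,\bphi)_{0,\O} = \sqrt{\nu}(\btheta_n,\curl\bphi)_{0,\O} - (q_n,\vdiv\bphi)_{0,\O},
\]
and both sides converge ($\L^2$ convergence against a fixed test function), giving $(\bz,\bphi)_{0,\O}=\sqrt{\nu}(\btheta,\curl\bphi)_{0,\O}-(q,\vdiv\bphi)_{0,\O}$, i.e. $\bz=\sqrt{\nu}\curl\btheta+\nabla q$ as distributions; since $\bz\in\L^2(\O)^3$, this says precisely that $\sqrt{\nu}\curl\btheta+\nabla q\in\L^2(\O)^3$, so $(\btheta,q)\in\mV$, and then $\|(\btheta_n,q_n)-(\btheta,q)\|_{\mV}\to0$ by construction.

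I expect the only mild subtlety to be the distributional limiting argument — making sure that the weak identity defining the combined operator $\sqrt{\nu}\curl(\cdot)+\nabla(\cdot)$ is stable under $\L^2$-convergence and that closedness of this (unbounded) operator is what guarantees completeness; this is the familiar fact that the graph norm of a closed operator gives a Banach (here Hilbert) space, applied to the operator $(\btheta,q)\mapsto\sqrt{\nu}\curl\btheta+\nabla q$ on the domain $\mV\subset\L^2(\O)^3\times\L^2_0(\O)$. Everything else is routine verification of inner-product axioms, so no genuine obstacle is anticipated.
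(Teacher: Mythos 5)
Your proof is correct and follows essentially the same route as the paper: establish that the norm comes from an inner product, then prove completeness by extracting the three $\L^2$ limits from a Cauchy sequence and identifying $\lim(\sqrt{\nu}\curl\btheta_n+\nabla q_n)$ with $\sqrt{\nu}\curl\btheta+\nabla q$ by passing to the limit in the distributional pairing after integrating by parts against test functions. The only cosmetic difference is that you exhibit the inner product explicitly, whereas the paper verifies the parallelogram identity and invokes polarisation; your closing remark that this is the graph norm of a closed operator is a nice way to package the same argument.
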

\begin{proof} Note that \eqref{eq:normV} is in fact a norm  as $\|(\btheta,q)\|_{\mV}=0$ implies $(\btheta,q)=(\cero,0)$ a.e.
Now, it is easy to check that the norm satisfies the parallelogram identity and hence, it induces an inner product  by
the polarisation identity. Therefore,  $\mV$  equipped with this inner product is an inner product space.   To complete the proof, it remains to show that this space is complete. To this end, let $\{(\btheta_n,q_n)\}_{n\in \mathbb{N}}$  be an arbitrary Cauchy sequence in $\mV.$  From  the completeness of $\L^2(\Omega)^3$ and $\L^2_0(\Omega)$, it follows that 
$(\btheta_n,q_n)\to (\hat{\btheta},\hat{q}) \in \L^2(\Omega)^3\times \L^2_0(\Omega)$ and $\sqrt{\nu} \curl {\btheta}_n + \nabla {q_n} \to \bphi \in \L^2(\Omega)^3.$
We now observe that 
 $\sqrt{\nu} \curl \hat{\btheta} + \nabla \hat{q} \in \mathcal{D}(\Omega)'$, and  for $\bxi\in  \mathcal{D}(\Omega)$
\begin{align*}
\langle \sqrt{\nu} \curl \hat\btheta + \nabla \hat{q} , \bxi\rangle & = \sqrt{\nu} \langle \curl \hat{\btheta},\bxi\rangle + \langle \nabla \hat{q}, \bxi\rangle =  \sqrt{\nu}\langle \hat{\btheta} ,\curl \bxi\rangle - \langle \hat{q},\vdiv\bxi\rangle\\
 & = \lim_{n\to\infty}\bigl[ \sqrt{\nu}\langle \hat{\btheta}_n ,\curl \bxi\rangle - \langle \hat{q}_n,\vdiv\bxi\rangle \bigr]\\
 & =  \lim_{n\to\infty}\bigl[\sqrt{\nu} \langle \curl \hat{\btheta}_n,\bxi\rangle + \langle \nabla \hat{q}_n, \bxi\rangle \bigr]\\
 & = \langle \bphi ,\bxi \rangle.
 \end{align*}
Here we have employed integration by parts twice and the continuity of the involved operators in 
passing to the limit. Therefore, $\sqrt{\nu} \curl \hat\btheta + \nabla \hat{q}= \bphi \in \L^2(\Omega)^3$ and this 
completes the rest of the proof.
\end{proof}

In order to derive a variational formulation of  the problem, we 
 test \eqref{eq:constitutive} against a sufficiently smooth function $\sigma\btheta$.
Then, integrating by parts (using the classical curl-based Gauss theorems from, e.g.,  \cite{gr-1986}) and using the velocity boundary condition, we arrive at
\begin{equation}\label{step1}
\sigma \int_{\O}\bomega\cdot\btheta-\sigma \sqrt{\nu}\int_{\O}\bu\cdot\curl\btheta
=0.
\end{equation}
Next, from the momentum equation \eqref{eq:momentum}, we readily obtain the relation 
\begin{equation}\label{repu}
\sigma\bu=\ff-\sqrt{\nu}\curl\bomega -\nu^{-1/2}\bomega \times {\boldsymbol\beta}
-\nabla p\quad\text{in}\quad\O,
\end{equation}
and after replacing \eqref{repu} in \eqref{step1},
we find that
\begin{align*}
\sigma\int_{\O}\bomega\cdot\btheta+\nu\int_{\O}\curl\bomega\cdot\curl\btheta
&+\sqrt{\nu}\int_{\O}\nabla p\cdot\curl\btheta
+\int_{\O} (\bomega \times {\boldsymbol\beta}) \cdot\curl\btheta=\sqrt{\nu}\int_{\O}\ff\cdot\curl\btheta.
\end{align*}
Next for a given sufficiently smooth function
 $q$, we can test \eqref{eq:momentum} against $\nabla q$. 
Then, we integrate by parts and use again the velocity boundary condition, as well as \eqref{eq:mass}
to arrive at 
$$\displaystyle \sigma\int_{\O}\bu\cdot\nabla q= 0,$$
which leads to the variational form 
\begin{equation*}
\sqrt{\nu}\int_{\O}\curl\bomega\cdot\nabla q +\nu^{-1/2}\int_{\O}
(\bomega \times {\boldsymbol\beta}) \cdot \nabla q + \int_{\O} \nabla p \cdot \nabla q =
\int_{\O}\ff\cdot\nabla q.
\end{equation*}
Summarising, problem \eqref{eq:momentum}-\eqref{eq:bc} is
written in its weak form as: Find $(\bomega,p)\in \mV$
such that
\begin{equation}\label{WVF3}
\mathcal{A}((\bomega,p),(\btheta,q))= \mathcal{F}(\btheta,q)\quad\forall(\btheta,q)\in\mV,
\end{equation}
where the multilinear form $\mathcal{A}:\mV\times\mV\to\R$ and linear
functional $\mathcal{F}:\mV\to\R$
are specified as 
\begin{align}
\mathcal{A}((\bomega,p),(\btheta,q)) & := \sigma\int_{\O}\bomega\cdot\btheta
+\int_{\O}(\sqrt{\nu}\curl\bomega+\nabla p)\cdot(\sqrt{\nu}\curl\btheta+\nabla q)\label{bili1}\\
&\qquad   +\nu^{-1/2}\int_{\O} (\bomega \times {\boldsymbol\beta})\cdot(\sqrt{\nu}\curl\btheta+\nabla q),\nonumber\\
\mathcal{F}(\btheta,q) &:=\int_{\O}\ff\cdot(\sqrt{\nu}\curl\btheta+\nabla q)
\label{functi1}.
\end{align}

While our whole development will focus on this vorticity-pressure formulation, we stress that 
from \eqref{repu} we can immediately have an expression for  velocity 
\begin{equation}\label{repu2}
\bu=\sigma^{-1}\left(\ff-\nu^{-1/2}\bomega \times {\boldsymbol\beta}-(\sqrt{\nu}\curl\bomega+\nabla p)\right)\quad\text{in}\quad\O.
\end{equation}

\begin{remark}
The reason for scaling the vorticity with $\sqrt{\nu}$ is now apparent from the structure of the 
variational form in \eqref{bili1}. On the other hand, if we write instead $\bar{\bomega}:=\curl\bu$, then \eqref{WVF3} 
could be written as: Find $(\bar{\bomega},p)\in \mV$ such that
\begin{align*}
\frac{\sigma}{\nu}\int_{\O}\bar{\bomega}\cdot\btheta &+\int_{\O}(\curl\bar{\bomega}+\nabla p)\cdot(\curl\btheta+\nabla q)
+\nu^{-1}\int_{\O} (\bar{\bomega} \times {\boldsymbol\beta})\cdot(\curl\btheta+\nabla q)\\
& =\int_{\O}\ff\cdot(\curl\btheta+\nabla q)\;\;\;\;\forall(\btheta,q)\in\mV,
\end{align*}
and the analysis of this problem 
follows the same structure as that of \eqref{WVF3}. 
\end{remark}


\noindent Let us first provide an auxiliary result to be used in the derivation of
{\it a priori} error estimates.
\begin{lemma}\label{acot-norma}
The multilinear form $\mathcal{A}$ satisfies the following bounds for all $(\btheta,q)\in\mV$,
\begin{align}\label{norma}
 \mathcal{A}((\btheta,q),(\btheta,q)) & \geq \sigma\left(1-\frac{2 \Vert \bbbeta \Vert_{\infty,\O}^2}{\nu\sigma}\right)
\Vert \btheta\Vert_{0,\O}^{2}
+\frac{1}{2}  \Vert\sqrt{\nu}\curl\btheta
+\nabla q\Vert_{0,\O}^2,\\
 \mathcal{A}((\bomega,p),(\btheta,q)) & \leq \Vert(\bomega,p)\Vert_{\mV} \Vert(\btheta,q)\Vert_{\mV} .\label{norma2}
\end{align}
\end{lemma}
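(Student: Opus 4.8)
The plan is to prove the two estimates in Lemma~\ref{acot-norma} directly from the definition of $\mathcal{A}$ in \eqref{bili1}, treating them separately.

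For the coercivity-type bound \eqref{norma}, I would first evaluate $\mathcal{A}((\btheta,q),(\btheta,q))$ from \eqref{bili1}. The first two terms give exactly $\sigma\Vert\btheta\Vert_{0,\O}^2+\Vert\sqrt{\nu}\curl\btheta+\nabla q\Vert_{0,\O}^2$, which are already of the right sign, so the only difficulty is controlling the cross term $\nu^{-1/2}\int_{\O}(\btheta\times\bbbeta)\cdot(\sqrt{\nu}\curl\btheta+\nabla q)$. The standard approach is to apply Cauchy--Schwarz and the pointwise bound $\vert\btheta\times\bbbeta\vert\le\Vert\bbbeta\Vert_{\infty,\O}\,\vert\btheta\vert$ to get
\begin{equation*}
\left|\nu^{-1/2}\int_{\O}(\btheta\times\bbbeta)\cdot(\sqrt{\nu}\curl\btheta+\nabla q)\right|
\le \nu^{-1/2}\Vert\bbbeta\Vert_{\infty,\O}\Vert\btheta\Vert_{0,\O}\Vert\sqrt{\nu}\curl\btheta+\nabla q\Vert_{0,\O},
\end{equation*}
and then use Young's inequality $ab\le \tfrac{a^2}{2\epsilon}+\tfrac{\epsilon b^2}{2}$ with the weight chosen so that the $\Vert\sqrt{\nu}\curl\btheta+\nabla q\Vert_{0,\O}^2$ contribution is absorbed at level $\tfrac12$. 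Taking $\epsilon=1$ (splitting the square term as $\tfrac12+\tfrac12$) leaves a residual of the form $\tfrac{1}{2}\nu^{-1}\Vert\bbbeta\Vert_{\infty,\O}^2\Vert\btheta\Vert_{0,\O}^2$ to subtract from $\sigma\Vert\btheta\Vert_{0,\O}^2$; rewriting $\tfrac12\nu^{-1}\Vert\bbbeta\Vert_{\infty,\O}^2=\tfrac{\sigma}{2}\cdot\tfrac{\Vert\bbbeta\Vert_{\infty,\O}^2}{\nu\sigma}$ and factoring out $\sigma$ gives the stated coefficient $\sigma\bigl(1-\tfrac{2\Vert\bbbeta\Vert_{\infty,\O}^2}{\nu\sigma}\bigr)$ once one accounts for the factor $2$ (which arises by instead choosing the Young weight to preserve $\tfrac12$ of the gradient term rather than splitting evenly; concretely, with $ab\le \tfrac{1}{4}a^2+b^2$-type balancing the residual is $2\nu^{-1}\Vert\bbbeta\Vert_{\infty,\O}^2\Vert\btheta\Vert_{0,\O}^2 = \tfrac{2\sigma\Vert\bbbeta\Vert_{\infty,\O}^2}{\nu\sigma}\Vert\btheta\Vert_{0,\O}^2$). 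This is essentially a bookkeeping exercise in choosing the Young constant.

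For the continuity bound \eqref{norma2}, I would again split $\mathcal{A}((\bomega,p),(\btheta,q))$ into its three terms. The first term is bounded by $\sigma\Vert\bomega\Vert_{0,\O}\Vert\btheta\Vert_{0,\O}$, the second by $\Vert\sqrt{\nu}\curl\bomega+\nabla p\Vert_{0,\O}\Vert\sqrt{\nu}\curl\btheta+\nabla q\Vert_{0,\O}$ via Cauchy--Schwarz, and the third by $\nu^{-1/2}\Vert\bbbeta\Vert_{\infty,\O}\Vert\bomega\Vert_{0,\O}\Vert\sqrt{\nu}\curl\btheta+\nabla q\Vert_{0,\O}$. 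To reach the clean bound $\Vert(\bomega,p)\Vert_{\mV}\Vert(\btheta,q)\Vert_{\mV}$ one must absorb the cross-term factor, which requires that the constant $\nu^{-1/2}\Vert\bbbeta\Vert_{\infty,\O}$ not be too large relative to $\sigma$ — consistent with the smallness assumption $\tfrac{2\Vert\bbbeta\Vert_{\infty,\O}^2}{\nu\sigma}<1$ implicit in \eqref{norma}. Under that assumption $\nu^{-1/2}\Vert\bbbeta\Vert_{\infty,\O}\le\sqrt{\sigma/2}\le\sqrt{\sigma}$, so grouping $\sqrt{\sigma}\Vert\bomega\Vert_{0,\O}$ with the first term and collecting the three bilinear pairings via the discrete Cauchy--Schwarz inequality on $\R^3$ (pairing $(\sqrt{\sigma}\Vert\bomega\Vert_{0,\O},\Vert\sqrt{\nu}\curl\bomega+\nabla p\Vert_{0,\O},\Vert p\Vert_{0,\O})$ against the analogous vector for $(\btheta,q)$) yields exactly $\Vert(\bomega,p)\Vert_{\mV}\Vert(\btheta,q)\Vert_{\mV}$, recalling the definition \eqref{eq:normV} of the $\mV$-norm.

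The main obstacle is not any deep analytical point but rather tracking constants carefully in \eqref{norma} so that the Young-inequality balancing reproduces the precise coefficient $1-\tfrac{2\Vert\bbbeta\Vert_{\infty,\O}^2}{\nu\sigma}$ stated, and making sure the same smallness condition is what makes \eqref{norma2} hold with constant exactly one. I would write out the Young step explicitly to avoid an off-by-a-factor error, and then note that both bounds together are what later licenses invoking Lax--Milgram (or its nonsymmetric variant) on $\mV$.
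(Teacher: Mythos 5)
Your treatment of \eqref{norma} is essentially the paper's own argument: isolate the cross term, apply Cauchy--Schwarz with $\Vert\btheta\times\bbbeta\Vert_{0,\O}\le \Vert\bbbeta\Vert_{\infty,\O}\Vert\btheta\Vert_{0,\O}$, and absorb half of $\Vert\sqrt{\nu}\curl\btheta+\nabla q\Vert_{0,\O}^2$ by Young's inequality. Your bookkeeping of the Young weights is slightly muddled (splitting $ab\le\tfrac12a^2+\tfrac12b^2$ from the sharp cross-product bound actually yields the \emph{stronger} coefficient $\sigma\bigl(1-\tfrac{\Vert\bbbeta\Vert_{\infty,\O}^2}{2\nu\sigma}\bigr)$; the paper's factor $2$ comes from the more generous bound $2\,\nu^{-1/2}\Vert\bbbeta\Vert_{\infty,\O}\Vert\btheta\Vert_{0,\O}\Vert\sqrt{\nu}\curl\btheta+\nabla q\Vert_{0,\O}$ that it uses consistently elsewhere), but since a stronger lower bound implies \eqref{norma}, this part is fine.

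The proof of \eqref{norma2}, however, has a genuine gap at the final step. After bounding the three terms you are left with
$\sigma\Vert\bomega\Vert_{0,\O}\Vert\btheta\Vert_{0,\O}+\Vert\sqrt{\nu}\curl\bomega+\nabla p\Vert_{0,\O}\Vert\sqrt{\nu}\curl\btheta+\nabla q\Vert_{0,\O}+\nu^{-1/2}\Vert\bbbeta\Vert_{\infty,\O}\Vert\bomega\Vert_{0,\O}\Vert\sqrt{\nu}\curl\btheta+\nabla q\Vert_{0,\O}$,
and the third product is an \emph{off-diagonal} pairing: it couples the first component of the vector $(\sqrt{\sigma}\Vert\bomega\Vert_{0,\O},\Vert\sqrt{\nu}\curl\bomega+\nabla p\Vert_{0,\O},\Vert p\Vert_{0,\O})$ with the second component of the corresponding vector for $(\btheta,q)$. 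The discrete Cauchy--Schwarz inequality in $\R^3$ only controls the sum of the \emph{diagonal} products by the product of the Euclidean norms, so it cannot absorb this extra term, no matter how small $\nu^{-1/2}\Vert\bbbeta\Vert_{\infty,\O}$ is. Indeed, taking $(\btheta,q)=(\bomega,p)$ shows that \eqref{norma2} with constant exactly one would force $\nu^{-1/2}\int_{\O}(\bomega\times\bbbeta)\cdot(\sqrt{\nu}\curl\bomega+\nabla p)\le\Vert p\Vert_{0,\O}^2$, which fails in general. What one \emph{can} prove by estimating the three terms separately is continuity with a constant of the form $1+\sigma^{-1/2}\nu^{-1/2}\Vert\bbbeta\Vert_{\infty,\O}$ (bounded, e.g., by $1+1/\sqrt{2}$ under \eqref{bound1}), and this is exactly the constant the paper itself uses when it invokes boundedness later (in the duality argument of Lemma~\ref{converl2} and in \eqref{error-1}). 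So your instinct that the cross term must be absorbed under a smallness condition is the right one, but the proposed absorption via a single three-component Cauchy--Schwarz does not close; you should instead state and prove \eqref{norma2} with an explicit multiplicative constant.
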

\begin{proof}
From the definition of $\mathcal{A}(\cdot,\cdot),$ we readily obtain the relation
$$\sigma\Vert \btheta\Vert_{0,\O}^{2}+\Vert\sqrt{\nu}\curl\btheta+\nabla q\Vert_{0,\O}^{2}
+\nu^{-1/2}\int_{\O} (\btheta \times {\boldsymbol\beta})\cdot(\sqrt{\nu}\curl\btheta+\nabla q)
=\mathcal{A}((\btheta,q),(\btheta,q)).$$
Subsequently, an appeal to the Cauchy-Schwarz inequality leads to 
\begin{equation}\label{eqrf}
\sigma\left(1-\frac{2 \Vert \bbbeta \Vert_{\infty,\O}^2}{\nu\sigma}\right)\Vert \btheta\Vert_{0,\O}^{2}
+\frac{1}{2}\Vert\sqrt{\nu}\curl\btheta+\nabla q\Vert_{0,\O}^{2}
\le \mathcal{A}((\btheta,q),(\btheta,q)).
\end{equation}
Thus, \eqref{norma} follows from \eqref{eqrf}, and 
relation \eqref{norma2} follows directly form the Cauchy-Schwarz inequality.  This completes the proof.
\end{proof}

As a consequence of Lemma \ref{acot-norma}, we can readily derive the following  result, stating the stability of problem  \eqref{WVF3}. 
\begin{lemma}\label{lem:stabi}
Assume that 
\begin{equation}\label{bound1}
2 \Vert {\boldsymbol\beta} \Vert_{\infty,\O}^2< \nu\sigma,
\end{equation}
holds true. 
Then, there exists $C >0$ such that 
$$\Vert( \bomega,p)\Vert_{\mV} \le C \Vert\ff\Vert_{0,\O}.$$
\end{lemma}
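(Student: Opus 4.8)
The plan is to obtain the stability bound as a direct consequence of Lemma~\ref{acot-norma} via a Lax--Milgram-type argument for the multilinear (in fact bilinear) form $\mathcal{A}$. First I would observe that, since the only genuinely nonlinear-looking term in $\mathcal{A}$ is the convective contribution $\nu^{-1/2}\int_{\O}(\bomega\times\bbbeta)\cdot(\sqrt{\nu}\curl\btheta+\nabla q)$, which is in fact \emph{linear} in the first argument $\bomega$ (with $\bbbeta$ a fixed datum), the form $\mathcal{A}(\cdot,\cdot)$ is bilinear and bounded on $\mV\times\mV$ by \eqref{norma2}. Under the smallness hypothesis \eqref{bound1}, the constant $1-\frac{2\Vert\bbbeta\Vert_{\infty,\O}^2}{\nu\sigma}$ appearing in \eqref{norma} is strictly positive, so \eqref{norma} yields coercivity: there exists $\alpha>0$ (depending on $\sigma$, $\nu$, and $\Vert\bbbeta\Vert_{\infty,\O}$) with $\mathcal{A}((\btheta,q),(\btheta,q))\ge \alpha\Vert(\btheta,q)\Vert_{\mV}^2$ for all $(\btheta,q)\in\mV$; here one must check that the two terms on the right-hand side of \eqref{norma}, namely the $\Vert\btheta\Vert_{0,\O}^2$ contribution and the $\Vert\sqrt{\nu}\curl\btheta+\nabla q\Vert_{0,\O}^2$ contribution, together control all three terms of $\Vert(\btheta,q)\Vert_{\mV}^2$ from \eqref{eq:normV}. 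This is the only point requiring a small argument: the norm \eqref{eq:normV} also contains $\Vert q\Vert_{0,\O}^2$, which is \emph{not} directly bounded by the right-hand side of \eqref{norma}.

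To close that gap I would invoke a Poincaré/Ne\v{c}as-type inequality: since $q\in\L^2_0(\O)$ has zero mean, $\Vert q\Vert_{0,\O}\le C_\O\Vert\nabla q\Vert_{0,\O}$, and $\nabla q$ is controlled because $(\sqrt{\nu}\curl\btheta+\nabla q)-\sqrt{\nu}\curl\btheta=\nabla q$ gives, together with the pointwise orthogonality-type bound or simply the triangle inequality, $\Vert\nabla q\Vert_{0,\O}\le \Vert\sqrt{\nu}\curl\btheta+\nabla q\Vert_{0,\O}+\sqrt{\nu}\Vert\curl\btheta\Vert_{0,\O}$. This last step needs $\Vert\curl\btheta\Vert_{0,\O}$, which is not in the coercivity bound either; however, one can use the Helmholtz-type decomposition implicitly present in $\mV$, or more simply test with the specific combination and use that on $\L^2_0(\O)$ the gradient is injective with closed range, i.e. the $\mathrm{inf}$--$\mathrm{sup}$ condition $\sup_{\btheta}\frac{\int_\O \nabla q\cdot(\sqrt\nu\curl\btheta+\nabla q)}{\Vert(\btheta,q)\Vert_\mV}\gtrsim \Vert q\Vert_{0,\O}$, obtained by choosing $\btheta=\cero$. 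That choice gives immediately $\int_\O|\nabla q|^2 = \mathcal{A}((\cero,q),(\cero,q)) - 0$, wait — one must be careful that $(\cero,q)\in\mV$ requires $\nabla q\in\L^2$, which holds since $q\in\H^1(\O)$ is built into $\Q$; thus $\Vert\nabla q\Vert_{0,\O}^2=\mathcal{A}((\cero,q),(\cero,q))$ and Poincaré finishes the control of $\Vert q\Vert_{0,\O}$. Combining, $\mathcal{A}$ is coercive on $\mV$ in the full norm \eqref{eq:normV}.

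With boundedness of the bilinear form and coercivity established, Lax--Milgram (or the Banach--Ne\v{c}as--Babu\v{s}ka theorem) guarantees a unique $(\bomega,p)\in\mV$ solving \eqref{WVF3}, and the standard energy estimate gives $\alpha\Vert(\bomega,p)\Vert_{\mV}^2\le \mathcal{A}((\bomega,p),(\bomega,p))=\mathcal{F}(\bomega,p)$. It then remains to bound the right-hand side: from the definition \eqref{functi1} and Cauchy--Schwarz, $|\mathcal{F}(\bomega,p)|=|\int_\O\ff\cdot(\sqrt\nu\curl\bomega+\nabla p)|\le\Vert\ff\Vert_{0,\O}\Vert\sqrt\nu\curl\bomega+\nabla p\Vert_{0,\O}\le\Vert\ff\Vert_{0,\O}\Vert(\bomega,p)\Vert_{\mV}$, whence $\Vert(\bomega,p)\Vert_{\mV}\le\alpha^{-1}\Vert\ff\Vert_{0,\O}$, giving the claim with $C=\alpha^{-1}$.

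The main obstacle, as indicated, is not the Lax--Milgram machinery itself but verifying that the coercivity estimate \eqref{norma}, whose right-hand side only visibly contains $\Vert\btheta\Vert_{0,\O}^2$ and $\Vert\sqrt\nu\curl\btheta+\nabla q\Vert_{0,\O}^2$, actually dominates the \emph{full} $\mV$-norm including the $\Vert q\Vert_{0,\O}^2$ term; the resolution is the Poincaré inequality for zero-mean functions applied after extracting $\Vert\nabla q\Vert_{0,\O}$ from the combined term, as sketched above. Once that bookkeeping is done, everything else is routine.
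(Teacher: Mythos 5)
Your overall strategy (coercivity from Lemma~\ref{acot-norma} plus Cauchy--Schwarz on $\mathcal{F}$) is the same as the paper's, and you correctly locate the one nontrivial point: the right-hand side of \eqref{norma} contains no $\Vert q\Vert_{0,\O}^2$ term, so one must show that $\Vert q\Vert_{0,\O}$ is controlled by the quantities that \emph{are} there. But your resolution of this point does not work. The Poincar\'e route requires $\Vert\nabla q\Vert_{0,\O}\le\Vert\sqrt{\nu}\curl\btheta+\nabla q\Vert_{0,\O}+\sqrt{\nu}\Vert\curl\btheta\Vert_{0,\O}$, and as you yourself note, $\Vert\curl\btheta\Vert_{0,\O}$ is not controlled by \eqref{norma}; moreover, for a general element of $\mV$ only the \emph{combination} $\sqrt{\nu}\curl\btheta+\nabla q$ is required to be in $\L^2(\O)^3$, so $\Vert\nabla q\Vert_{0,\O}$ and $\Vert\curl\btheta\Vert_{0,\O}$ need not even be finite separately. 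The fallback "choose $\btheta=\cero$" is a non sequitur: computing $\mathcal{A}((\cero,q),(\cero,q))=\Vert\nabla q\Vert_{0,\O}^2$ is a diagonal evaluation at the \emph{different} element $(\cero,q)$ (which, for the reason just given, may not even belong to $\mV$ when $(\btheta,q)$ does), and it says nothing about whether $\mathcal{A}((\btheta,q),(\btheta,q))$ dominates $\Vert q\Vert_{0,\O}^2$ for the element $(\btheta,q)$ you actually test with. So the coercivity of $\mathcal{A}$ in the full $\mV$-norm is asserted but not proved.

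The missing ingredient is a duality (negative-norm) argument that bypasses $\Vert\curl\btheta\Vert_{0,\O}$ entirely. Since $q$ has zero mean, $\Vert q\Vert_{0,\O}\le C_s\Vert\nabla q\Vert_{-1,\O}$, and for any $\chi\in\H^1_0(\O)$ one has $(\nabla q,\nabla\chi)_{0,\O}=(\sqrt{\nu}\curl\btheta+\nabla q,\nabla\chi)_{0,\O}-(\sqrt{\nu}\curl\btheta,\nabla\chi)_{0,\O}$, and the last term vanishes because the divergence of a curl is zero (integrate by parts with $\chi\in\H^1_0$). Hence $\Vert q\Vert_{0,\O}\le C_s\Vert\sqrt{\nu}\curl\btheta+\nabla q\Vert_{0,\O}$, i.e.\ the $\L^2$-norm of $q$ is bounded by the \emph{combined} term, which is exactly one of the two quantities appearing on the right of \eqref{norma}. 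This is estimate \eqref{estL2:q} in the paper, and with it your Lax--Milgram/energy argument closes: $\mathcal{A}$ is coercive in the full norm \eqref{eq:normV} under \eqref{bound1}, and $\Vert(\bomega,p)\Vert_{\mV}\le\alpha^{-1}\Vert\ff\Vert_{0,\O}$ follows as you wrote. Without this step, however, the proof as proposed has a genuine gap.
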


\begin{proof} Choose $(\btheta,q)=(\bomega,p)$ in (\ref{WVF3}). From (\ref{norma}) with (\ref{bound1}), and the bound 
$$|\mathcal{F}(\bomega,p) | \leq \Vert\ff\Vert_{0,\O} \;\Vert\sqrt{\nu}\curl\bomega+\nabla p\Vert_{0,\O} \leq \Vert\ff\Vert_{0,\O} \;
\|(\bomega,p)\|_{\mV},$$ 
we obtain
$$\Big(\Vert \bomega\Vert_{0,\O}^2+\Vert\sqrt{\nu}\curl\bomega+\nabla p\Vert_{0,\O}^2\Big) \leq C\; \Vert\ff\Vert_{0,\O} \;
\|(\bomega,p)\|_{\mV}.$$
Then, we note that 
$$\Vert p \Vert_{0,\O} \leq C_{s} \; \Vert \nabla p\Vert_{{-1},\O},
$$
and invoking the definition of the $\H^{-1}$-norm, it is  observed that
\begin{align*}
\Vert \nabla p\Vert_{{-1},\O} 
&\leq \sup_{\{q\in \H^1_0(\O): \Vert \nabla q\Vert_{0,\O} =1\}} (\nabla p , \nabla q)_{0,\O} \\
&= \sup_{\{q\in \H^1_0(\O): \Vert \nabla q\Vert_{0,\O} =1\} } \Big( (\sqrt{\nu}\curl\bomega+\nabla p  , \nabla q)_{0,\O} - (\sqrt{\nu}\curl\bomega, \nabla q)_{0,\O}\Big).
\end{align*}
Using integration by parts for the second term in this last relation, and using that 
$q\in \H^1_0(\Omega)$ as well as $\nabla\cdot \curl \bomega=0$,
$(\sqrt{\nu}\curl\bomega, \nabla q)=0$, we end up with 
\begin{align}\label{estL2:q}
\Vert p \Vert_{0,\O} \leq C_s \Vert \nabla p\Vert_{{-1},\O} 
&\leq  C_s  \;  \Vert \sqrt{\nu}\curl\bomega+\nabla p\Vert_{0,\O}\nonumber \\
&\leq  C_s \;\|(\bomega,p)\|_{\mV}.
\end{align}
Altogether, it completes the rest of the proof.
\end{proof}

\begin{theorem}
Under the assumption \eqref{bound1},  there exists a unique weak solution $(\bomega,p)\in \mV$ to the problem \eqref{WVF3}, which depends continuously on $\ff$.
\end{theorem}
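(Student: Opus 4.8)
The plan is to deduce well-posedness of \eqref{WVF3} from the Lax--Milgram--Babuška framework, treating the convective term as a bounded perturbation of a coercive bilinear form. First I would observe that Lemma~\ref{acot-norma} gives continuity of $\mathcal{A}$ on $\mV\times\mV$ in the norm $\|\cdot\|_{\mV}$, so the only remaining structural ingredient is coercivity. Under assumption \eqref{bound1} the factor $1-\tfrac{2\|\bbbeta\|_{\infty,\O}^2}{\nu\sigma}$ is strictly positive, so \eqref{norma} yields
$$\mathcal{A}((\btheta,q),(\btheta,q))\ \geq\ \alpha_0\Big(\sigma\|\btheta\|_{0,\O}^2+\|\sqrt{\nu}\curl\btheta+\nabla q\|_{0,\O}^2\Big)$$
for some $\alpha_0>0$ depending only on $\nu,\sigma,\|\bbbeta\|_{\infty,\O}$. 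This is not yet coercivity with respect to the full $\mV$-norm, because the right-hand side is missing the $\|q\|_{0,\O}^2$ contribution; so the key step is to absorb that term.

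The main obstacle is precisely recovering control of $\|q\|_{0,\O}$ from $\|\sqrt{\nu}\curl\btheta+\nabla q\|_{0,\O}$. I would handle this exactly as in the proof of Lemma~\ref{lem:stabi}: since $q\in\LOO$, the generalised Poincaré (Nečas) inequality gives $\|q\|_{0,\O}\le C_s\|\nabla q\|_{-1,\O}$, and then for any $r\in\H^1_0(\O)$ with $\|\nabla r\|_{0,\O}=1$ one writes $(\nabla q,\nabla r)_{0,\O}=(\sqrt{\nu}\curl\btheta+\nabla q,\nabla r)_{0,\O}-(\sqrt{\nu}\curl\btheta,\nabla r)_{0,\O}$, and the last term vanishes after integration by parts because $\vdiv\curl\btheta=0$ and $r$ has zero trace. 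Hence $\|q\|_{0,\O}\le C_s\|\sqrt{\nu}\curl\btheta+\nabla q\|_{0,\O}$, and combining this with the previous bound produces a genuine coercivity estimate $\mathcal{A}((\btheta,q),(\btheta,q))\ge \alpha\,\|(\btheta,q)\|_{\mV}^2$ for all $(\btheta,q)\in\mV$, with $\alpha>0$.

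For the linear functional, continuity of $\mathcal{F}$ on $\mV$ is immediate from Cauchy--Schwarz: $|\mathcal{F}(\btheta,q)|\le\|\ff\|_{0,\O}\,\|\sqrt{\nu}\curl\btheta+\nabla q\|_{0,\O}\le\|\ff\|_{0,\O}\,\|(\btheta,q)\|_{\mV}$, so $\mathcal{F}\in\mV'$ with $\|\mathcal{F}\|_{\mV'}\le\|\ff\|_{0,\O}$. Since $\mV$ equipped with $\|\cdot\|_{\mV}$ is a Hilbert space (by Lemma~\ref{acot-norma}'s preceding lemma), $\mathcal{A}$ is bounded and coercive, and $\mathcal{F}$ is bounded, the Lax--Milgram theorem gives existence and uniqueness of $(\bomega,p)\in\mV$ solving \eqref{WVF3}. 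The continuous dependence $\|(\bomega,p)\|_{\mV}\le\alpha^{-1}\|\ff\|_{0,\O}$ follows from coercivity tested against the solution, which is the content already recorded in Lemma~\ref{lem:stabi}; so in practice I would simply invoke Lemmas~\ref{acot-norma} and~\ref{lem:stabi} and cite Lax--Milgram to conclude. One subtlety worth a remark: the bilinear form is \emph{not} symmetric due to the $\bomega\times\bbbeta$ term, so one should appeal to the (non-symmetric) Lax--Milgram / Babuška--Lax--Milgram statement rather than the Riesz representation form; this is harmless since coercivity plus continuity suffice.
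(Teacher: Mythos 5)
Your proposal is correct and follows essentially the same route as the paper: boundedness from \eqref{norma2}, coercivity in the full $\mV$-norm obtained by combining the G\aa rding-type bound \eqref{norma} under \eqref{bound1} with the Ne\v{c}as-type control $\Vert q\Vert_{0,\O}\le C_s\Vert\sqrt{\nu}\curl\btheta+\nabla q\Vert_{0,\O}$ already established in the proof of Lemma~\ref{lem:stabi}, and then the (non-symmetric) Lax--Milgram lemma, with continuous dependence read off from Lemma~\ref{lem:stabi}. You merely make explicit the details that the paper compresses into citations of Lemmas~\ref{acot-norma} and~\ref{lem:stabi}.
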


\begin{proof}  The continuous dependence of the solution  $(\bomega,p) \in  \mV$ on the given data $\ff$ is a consequence of 
the stability Lemma~\ref{lem:stabi}. Likewise, a straightforward application of that result implies the uniqueness of solution. 

On the other hand, for the existence we note that  the multilinear form  $ \mathcal{A}(\cdot,\cdot) $ is both coercive and bounded  in $\mV$ with respect to 
$\|(\cdot,\cdot)\|_{\mV}$  because of Lemmas \ref{acot-norma}
and \ref{lem:stabi}. Therefore, an appeal to the Lax-Milgram Lemma completes the rest of the proof.
\end{proof}

\begin{remark}
Even if $\bbbeta$ violates \eqref{bound1} we can still address the well-posedness of problem  \eqref{WVF3}.
Since problem \eqref{NS-cont-vort} with the boundary conditions $\bu = \cero$ on $\Gamma$ is equivalent to \eqref{WVF3} under the assumption of 
sufficient regularity, then the unique solvability of \eqref{NS-cont-vort} implies that of \eqref{WVF3}. Now, denoting 
by $\mathbf{P}$ the Leray projection operator that maps $\bL^2$ onto a divergence-free space, 
we can see that the following problem 
$${\mathcal{L}} (\bu):= {\mathbf{P}}( -\nu \Delta \bu + \curl \bu \times \bbbeta + \sigma\bu + \nabla p) = {\mathbf{P}} \ff, $$
defines a Fredholm alternative (see for instance, \cite{carstensen16}). Note also that, as long as  zero is not in the spectrum of ${\mathcal{L}}$,  the operator  ${\mathcal{L}}$ is invertible. With the null space of ${\mathcal{L}}$ being a trivial space,
the operator  ${\mathcal{L}}$   is indeed an isomorphism onto the dual space $\Z'$ of $\Z.$ Finally,  $p$ is recovered in a standard way. 
 \end{remark}
In any case, for the rest of the paper we will simply assume that 
 \begin{itemize}
 \item[\textbf{(A)}] The problem  \eqref{WVF3} has a unique weak solution $(\bomega,p) \in \mV.$
\end{itemize}

\section{Finite element discretisation and error estimates} \label{sec:FE}
This section focuses on finite element approximations and their {\it a priori} error estimates.

\subsection{Galerkin scheme and solvability}
Let $\{\cT_{h}(\O)\}_{h>0}$ be a shape-regular
family of partitions of the polyhedral region
$\bar\O$, by tetrahedrons $T$ of diameter $h_T$, with mesh size
$h:=\max\{h_T:\; T\in\cT_{h}(\O)\}$.
In what follows, given an integer $k\ge1$ and a subset
$S$ of $\R^3$, $\cP_k(S)$ will denote the space of polynomial functions
defined locally in $S$ and being of total degree $\leq k$.

Now, for any $T\in\cT_{h}(\O)$ we recall the definition of the local N\'ed\'elec space
$$\N_k(T):=\cP_{k-1}(T)^3\oplus R_{k}(T),$$
where $R_{k}(T):=\{\bp\in\bar{\cP}_{k}(T)^3: \bp(\bx)\cdot\bx=0\}$, and where
$\bar{\cP}_{k}$ is the subset of homogeneous polynomials of degree $k$.
With this we define the discrete spaces for vorticity and Bernoulli pressure:
\begin{align}
\Z_h&:=\{\btheta_h\in\Z: \btheta_h|_T\in\N_k(T)\quad\forall T\in\cT_{h}(\O)\},\nonumber\\
\Q_h&:=\{q_h\in\Q: q_h|_T\in\cP_k(T)\quad\forall T\in\cT_{h}(\O)\},\label{space2}\\
\mV_h&:=\Z_h \times \Q_h,\nonumber
\end{align}
and remark that functions in $\Z_h$ have continuous tangential components across
the faces of $\cT_{h}(\O)$.

Let us recall that for $s>1/2$, the N\'ed\'elec global interpolation operator
$\CR_{h}:\H^s(\curl;\O)\to\Z_h$ (cf. \cite{Alo-Valli}), satisfies 
the following approximation property:
%
For all $\btheta\in\H^{s}(\curl;\O)$ with $s\in(1/2,k]$,
there exists $C_{\mathrm{apx}} >0$ independent of $h$, such that
\begin{equation}\label{prop2N}
\Vert \btheta-\CR_{h}\btheta\Vert_{\Z}\le
C_{\mathrm{apx}}\;h^{s}\Vert\btheta\Vert_{\H^{s}(\curl;\O)}.
\end{equation}
On the other hand, for all $s>1/2$, the usual Lagrange interpolant
$\Pi_h:\HusO\cap\Q\to\Q_h$ features a similar property. Namely: 
For all $q\in\H^{1+s}(\O)$, $s\in(1/2,k]$
there exists $C_{\mathrm{apx}} >0$, independent of $h$, such that
\begin{equation}\label{prop2pi}
\Vert q-\Pi_{h}q\Vert_{\Q}\le
C_{\mathrm{apx}}\;h^{s}\Vert q\Vert_{\H^{1+s}(\O)}.
\end{equation}

The  Galerkin approximation of \eqref{WVF3} reads: 
{\em Find $(\bomega_h,p_h)\in\mV_h$ such that}
\begin{equation}\label{WVF3dis}
\mathcal{A}((\bomega_h,p_h),(\btheta_h,q_h))= \mathcal{F}(\btheta_h,q_h)\quad\forall(\btheta_h,q_h)\in\mV_h,
\end{equation}
where the multilinear form $\mathcal{A}:\mV_h \times\mV_h\to\R$ and the linear
functional $\mathcal{F}:\mV_h\to\R$ are specified as in \eqref{bili1} and \eqref{functi1}, respectively.

Next, let us prove that the discrete formulation \eqref{WVF3dis} is well-posed. 
 
 Before that, we address the stability of the 
 discrete problem. 
\begin{lemma}\label{stability-lemma}
Under the assumption $(A)$, and $h>0$ small enough,
 there exists $C>0,$ independent of $h,$ such that 
\begin{equation*}
\|(\bomega_h,p_h)\|_{\mV} \leq C\;\Vert \ff\Vert_{0,\O}.
\end{equation*}
\end{lemma}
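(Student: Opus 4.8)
The plan is to mimic at the discrete level the argument already used at the continuous level in Lemma~\ref{lem:stabi}, the only subtlety being that the coercivity bound \eqref{norma} now has to be combined with assumption \textbf{(A)} rather than with the smallness condition \eqref{bound1}, and that one must control $\Vert p_h \Vert_{0,\O}$ using a \emph{discrete} inf--sup / negative-norm device. First I would test \eqref{WVF3dis} with $(\btheta_h,q_h)=(\bomega_h,p_h)$ and use \eqref{norma} to obtain
\begin{equation*}
\sigma\Big(1-\tfrac{2\Vert\bbbeta\Vert_{\infty,\O}^2}{\nu\sigma}\Big)\Vert\bomega_h\Vert_{0,\O}^2
+\tfrac12\Vert\sqrt\nu\curl\bomega_h+\nabla p_h\Vert_{0,\O}^2
\le \mathcal{A}((\bomega_h,p_h),(\bomega_h,p_h)) = \mathcal{F}(\bomega_h,p_h),
\end{equation*}
and then bound the right-hand side as in Lemma~\ref{lem:stabi} by $\Vert\ff\Vert_{0,\O}\,\Vert(\bomega_h,p_h)\Vert_{\mV}$. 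If \eqref{bound1} is in force the coefficient of $\Vert\bomega_h\Vert_{0,\O}^2$ is positive and we are essentially done; in the general case under \textbf{(A)} one argues by a Fredholm/compactness (Peetre--Tartar) argument — since the discrete operator is a finite-dimensional perturbation of an injective one, injectivity of the discrete problem (which follows because a zero right-hand side forces $(\bomega_h,p_h)=(\cero,0)$ by the same energy identity together with the pressure estimate below) yields a uniform discrete stability constant; the usual care is needed to check the constant is $h$-independent, which is where ``$h$ small enough'' enters.

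\textbf{Controlling the pressure.} The term $\Vert p_h\Vert_{0,\O}$ is not directly bounded by the energy identity, so I would reproduce the negative-norm trick of Lemma~\ref{lem:stabi}: write $\Vert p_h\Vert_{0,\O}\le C_s\Vert\nabla p_h\Vert_{-1,\O}$ and then, for $q\in\H_0^1(\O)$ with $\Vert\nabla q\Vert_{0,\O}=1$,
\begin{equation*}
(\nabla p_h,\nabla q)_{0,\O} = (\sqrt\nu\curl\bomega_h+\nabla p_h,\nabla q)_{0,\O} - (\sqrt\nu\curl\bomega_h,\nabla q)_{0,\O},
\end{equation*}
and the last term vanishes after integration by parts since $\vdiv\curl\bomega_h=0$ and $q\in\H_0^1(\O)$. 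Hence $\Vert p_h\Vert_{0,\O}\le C_s\Vert\sqrt\nu\curl\bomega_h+\nabla p_h\Vert_{0,\O}\le C_s\Vert(\bomega_h,p_h)\Vert_{\mV}$, exactly as in the continuous case; note this step does \emph{not} require a discrete inf--sup condition, which is a pleasant feature of this formulation. Combining the three bounds ($\Vert\bomega_h\Vert_{0,\O}$, $\Vert\sqrt\nu\curl\bomega_h+\nabla p_h\Vert_{0,\O}$, $\Vert p_h\Vert_{0,\O}$) gives $\Vert(\bomega_h,p_h)\Vert_{\mV}^2\le C\Vert\ff\Vert_{0,\O}\Vert(\bomega_h,p_h)\Vert_{\mV}$, and dividing through finishes the estimate.

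\textbf{Main obstacle.} The only genuinely delicate point is establishing the $h$-uniform discrete stability \emph{without} assuming \eqref{bound1}, i.e. under the bare hypothesis \textbf{(A)}. The clean energy argument above only gives an $h$-free constant when the convective term is dominated by $\sigma\Vert\bomega_h\Vert_{0,\O}^2$; in the general regime one must instead argue that the bilinear form is a compact perturbation of a coercive one and invoke a discrete version of the Fredholm alternative, using the convergence of $\mV_h$ to $\mV$ (via the interpolation estimates \eqref{prop2N}--\eqref{prop2pi}) and the well-posedness of the continuous problem to rule out a sequence $h_n\to 0$ with degenerating stability constants. This is the standard ``asymptotic'' argument and it is precisely what forces the qualifier ``for $h>0$ small enough'' in the statement; everything else is a routine transcription of Lemma~\ref{lem:stabi}.
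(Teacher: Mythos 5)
Your easy case and your pressure bound are exactly the paper's: testing with $(\bomega_h,p_h)$, Cauchy--Schwarz on the convective term, and the negative-norm trick $\Vert p_h\Vert_{0,\O}\le C_s\Vert\sqrt{\nu}\curl\bomega_h+\nabla p_h\Vert_{0,\O}$ all appear verbatim in the paper's argument. But the heart of the lemma --- obtaining an $h$-uniform bound on $\sigma^{1/2}\Vert\bomega_h\Vert_{0,\O}$ \emph{without} the smallness condition \eqref{bound1} --- is precisely the step you leave as a sketch, and the sketch as written does not go through. The paper resolves it by a Schatz-type Aubin--Nitsche duality argument: it introduces the adjoint problem $\mathcal{A}((\btheta,q),(\widetilde{\bomega},\widetilde{p}))=\sigma(\bomega_h,\btheta)_{0,\O}+(p_h,q)_{0,\O}$, assumes the elliptic regularity $\widetilde{\bomega}\in\H^{\delta}(\curl;\O)$, $\widetilde{p}\in\H^{1+\delta}(\O)$ with a uniform constant, tests with $(\bomega_h,p_h)$, subtracts the interpolants $(\CR_h\widetilde{\bomega},\Pi_h\widetilde{p})$ using the discrete equation, and concludes
$\bigl(\sigma\Vert\bomega_h\Vert_{0,\O}^2+\Vert p_h\Vert_{0,\O}^2\bigr)^{1/2}\le C\bigl(h^{\delta}\Vert(\bomega_h,p_h)\Vert_{\mV}+\Vert\ff\Vert_{0,\O}\bigr)$;
substituting this into the energy estimate and taking $h\le h_0$ absorbs the troublesome term. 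No Fredholm alternative or compact-perturbation structure is invoked at the discrete level.

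Two concrete problems with your alternative route. First, your justification of discrete injectivity (``a zero right-hand side forces $(\bomega_h,p_h)=(\cero,0)$ by the same energy identity'') is circular in the regime you need it: the coefficient $1-2\Vert\bbbeta\Vert_{\infty,\O}^2/(\nu\sigma)$ in \eqref{norma} can be negative when \eqref{bound1} fails, so the energy identity alone does not give injectivity; in the paper discrete uniqueness is \emph{deduced from} the stability lemma, not used to prove it. Second, the Peetre--Tartar / compact-perturbation argument needs the embedding of $\mV$ (with the norm \eqref{eq:normV}, which only controls $\Vert\btheta\Vert_{0,\O}$, $\Vert\sqrt{\nu}\curl\btheta+\nabla q\Vert_{0,\O}$ and $\Vert q\Vert_{0,\O}$) into $\L^2(\O)^3\times\L^2_0(\O)$ to be compact, and it is not; so the extraction-of-a-contradictory-sequence argument cannot pass to the limit in the convective term. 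The adjoint-regularity duality argument is the standard substitute for that missing compactness, and it is the ingredient your proposal omits.
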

\begin{proof}
Choosing $(\btheta_h,q_h) = (\bomega_h,p_h)$ in  \eqref{WVF3dis}, a use of the Cauchy-Schwarz inequality with 
the estimate
\begin{align*}
\nu^{-1/2}\int_{\O} (\bomega_h \times {\boldsymbol\beta})\cdot(\sqrt{\nu}\curl\bomega_h+\nabla p_h) 
\leq & 2 \; \nu^{-1/2}  \; \Vert \boldsymbol\beta \Vert_{\infty,\O} \Vert  \bomega_h \Vert_{0,\O}\Vert \sqrt{\nu}\curl\bomega_h+\nabla p_h \Vert_{0,\O},
\end{align*}
yields
\begin{align}\label{stab02}
\sigma\Vert\bomega_h\Vert_{0,\O}^2+\Vert\sqrt{\nu}\curl\bomega_h+\nabla p_h\Vert_{0,\O}^2 \leq 
\Big(\Vert \ff\Vert_{0,\O} + 2 \; \nu^{-1/2}   \Vert \boldsymbol\beta \Vert_{\infty,\O} \Vert  \bomega_h \Vert_{0,\O}\Big)\; \Vert \sqrt{\nu}\curl\bomega_h+\nabla p_h \Vert_{0,\O}.
\end{align}
By \eqref{estL2:q}, it follows that
\begin{align*}
 \Vert p_h\Vert_{0,\O} \leq  C_s \;\Vert\sqrt{\nu}\curl\bomega_h
 +\nabla p_h\Vert_{0,\O}.
\end{align*}
And eventually we arrive at 
\begin{equation}\label{stab03}
\|(\bomega_h,p_h)\|_{\mV}
\leq C\; \Big(\Vert \ff\Vert_{0,\O} +\sigma^{-1/2}\; \nu^{-1/2}  \sigma^{1/2}  \Vert {\boldsymbol\beta} \Vert_{\infty,\O} \Vert  \bomega_h \Vert_{0,\O}\Big).
\end{equation}

In order to complete the proof, we require an estimate  for $\sigma^{1/2}\Vert  \bomega_h \Vert_{0,\O}$. For this we apply the Aubin-Nitsche duality argument 
to the following adjoint problem: 
Find $(\widetilde{\bomega},\widetilde{p})\in\Z\times\Q$ such that
\begin{equation*}
\mathcal{A}((\btheta,q),(\widetilde{\bomega},\widetilde{p}))= \sigma (\bomega_h, \btheta )_{0,\O}+ (p_h, q)_{0,\O}  \quad \quad \forall(\btheta, q)\in \Z\times\Q,
\end{equation*}
and whose solution (after assuming the natural additional regularity $\widetilde{\bomega}\in\H^{\delta}(\curl;\O)$ and $\widetilde{p}\in\H^{1+\delta}(\O)$,
for some $\delta\in(1/2,1]$) 
satisfies 
\begin{equation*}
\|\widetilde{\bomega}\|_{\H^{\delta}{(\curl,\O)}} + \|\widetilde{p}\|_{\H^{1+\delta}{(\O)}}
\leq C_{\mathrm{reg}} ( \sigma^{1/2} \|\bomega_h\|_{0,\O} + \|p_h\|_{0,\O}), 
\end{equation*}  
for $C_{\mathrm{reg}}>0$ a uniform regularity constant. 
Then, we set $(\btheta,q) = (\bomega_h,p_h)$ and find out
that for all $(\btheta_h,q_h) \in \Z_h\times\Q_h$, the following relation holds:  
\begin{align*}
&\sigma\;\Vert \bomega_h\Vert_{0,\O}^2 + \Vert  p_h\Vert_{0,\O}^2  = 
\mathcal{A}((\bomega_h,p_h),(\widetilde{\bomega},\widetilde{p}) ) 
  =  \mathcal{A}((\bomega_h,p_h),(\widetilde{\bomega}-\btheta_h,\widetilde{p}-q_h) ) - \mathcal{F}(\widetilde{\bomega}-\btheta_h,\widetilde{p}-q_h)+\mathcal{F}(\widetilde{\bomega},\widetilde{p}) \nonumber \\
 &\qquad  \leq \sigma \Vert \bomega_h\Vert_{0,\O}\Vert \widetilde{\bomega}-\btheta_h\Vert_{0,\O} +\Big(  \Vert \sqrt{\nu}\curl \widetilde{\bomega}+\nabla\widetilde{ p} \Vert_{0,\O}  
 +\Vert \sqrt{\nu}\curl(\widetilde{\bomega}-\btheta_h)+\nabla (\widetilde{p} - q_h) \Vert_{0,\O} \Big)\;\Vert\ff\Vert_{0,\O} \nonumber \\
 &\qquad \quad + \Big(
 \Vert \sqrt{\nu}\curl \bomega_h+\nabla p_h \Vert_{0,\O} +  \nu^{-1/2}   \Vert {\boldsymbol\beta }\Vert_{\infty,\O} \Vert  \bomega_h \Vert_{0,\O}\Big)\;
\Vert \sqrt{\nu}\curl(\widetilde{\bomega}-\btheta_h)+\nabla (\widetilde{p} - q_h) \Vert_{0,\O} \nonumber \\
& \qquad \leq C_{\mathrm{reg}}\;C_{\mathrm{apx}}\; 
\biggl(\sigma \Vert \bomega_h\Vert_{0,\O}^2 + \Vert  p_h\Vert_{0,\O}^2 \biggr)^{1/2} 
\Big( h^{\delta}\; \big(1+\sigma^{1/2} +  \sigma^{-1/2}\;\nu^{-1/2}   \Vert {\boldsymbol\beta} \Vert_{\infty,\O}\big)  \;\Vert(\bomega_h,p_h)\Vert_{\mV} + \Vert\ff\Vert_{0,\O} \Big).
\end{align*}
In this way, 
we obtain 
\begin{equation}\label{stab06}
\biggl(\sigma \Vert \bomega_h\Vert_{0,\O}^2 + \Vert  p_h\Vert_{0,\O}^2 \biggr)^{1/2} \leq    C_{\mathrm{reg}}\;C_{\mathrm{apx}}\; \Big(h^{\delta}\; \Big(1+\sigma^{1/2} + \sigma^{-1/2}\;\nu^{-1/2}   \Vert {\boldsymbol\beta }\Vert_{\infty,\O}\Big)\; \Vert (\bomega_h,p_h)\Vert_{\mV} + \Vert\ff\Vert_{0,\O} \Big),
\end{equation}
and on substitution of \eqref{stab06} into \eqref{stab03}, we readily see that
$$\biggl(1-C_s\; C_{\mathrm{reg}}\;C_{\mathrm{apx}}\; \sigma^{-1/2}\;\nu^{-1/2} \;\Big(1+\sigma^{1/2} + \sigma^{-1/2}\;\nu^{-1/2}   \Vert \boldsymbol\beta \Vert_{\infty,\O}\Big)\;h^{\delta} \biggr)\;
 \Vert (\bomega_h,p_h)\Vert_{\mV} \leq C\; \Vert \ff\Vert_{0,\O}.$$
Therefore, there is a positive $h_0$ such that for  $0<h\leq h_0$,  the following holds:
$$  \biggl(1-C_s\; C_{\mathrm{reg}}\;C_{\mathrm{apx}}\; \sigma^{-1/2}\;\nu^{-1/2} \;\Big(1+\sigma^{1/2} + \sigma^{-1/2}\;\nu^{-1/2}   \Vert \boldsymbol\beta \Vert_{\infty,\O}\Big)\;h^{\delta} \biggr)\; \geq \gamma_0>0,$$
for some positive $\gamma_0,$ independent of $h$. This completes the rest of the proof. 
\end{proof}

\begin{theorem}
For  $h>0$ small enough, 
the discrete problem \eqref{WVF3dis} has a unique solution $(\bomega_h,p_h)\in\mV_h$.
\end{theorem}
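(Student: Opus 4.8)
The plan is to reduce the well-posedness of the finite-dimensional square system \eqref{WVF3dis} to an injectivity statement, which is exactly what the stability Lemma~\ref{stability-lemma} supplies. Since $\mathcal{A}(\cdot,\cdot)$ is bilinear (the only nonlinear-looking term, $\nu^{-1/2}\int_{\O}(\bomega\times\bbbeta)\cdot(\sqrt{\nu}\curl\btheta+\nabla q)$, is in fact linear in each argument separately because $\bbbeta$ is a fixed datum) and $\mV_h=\Z_h\times\Q_h$ is finite-dimensional, the map $(\bomega_h,p_h)\mapsto \big[(\btheta_h,q_h)\mapsto \mathcal{A}((\bomega_h,p_h),(\btheta_h,q_h))\big]$ is a linear endomorphism of $\mV_h$ (identifying $\mV_h$ with its dual via the $\mV$-inner product). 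For such a map on a finite-dimensional space, surjectivity — and hence existence and uniqueness of a solution to \eqref{WVF3dis} for every right-hand side $\mathcal{F}$ — is equivalent to injectivity, i.e.\ to the implication $\mathcal{A}((\bomega_h,p_h),(\btheta_h,q_h))=0$ for all $(\btheta_h,q_h)\in\mV_h$ $\Rightarrow$ $(\bomega_h,p_h)=(\cero,0)$.

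The key step is then to establish this injectivity using Lemma~\ref{stability-lemma}. Concretely, suppose $\mathcal{A}((\bomega_h,p_h),(\btheta_h,q_h))=0$ for all $(\btheta_h,q_h)\in\mV_h$. This means $(\bomega_h,p_h)$ is the Galerkin solution of \eqref{WVF3dis} associated with the data $\ff=\cero$ (note $\mathcal{F}\equiv 0$ when $\ff=\cero$, directly from \eqref{functi1}). Applying Lemma~\ref{stability-lemma} for $h\le h_0$ small enough gives $\|(\bomega_h,p_h)\|_{\mV}\le C\,\|\cero\|_{0,\O}=0$, whence $(\bomega_h,p_h)=(\cero,0)$ since $\|\cdot\|_{\mV}$ is a norm on $\mV$ (established in the first lemma of Section~\ref{sec:model}). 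This proves injectivity, hence bijectivity, of the discrete operator, and therefore \eqref{WVF3dis} admits a unique solution $(\bomega_h,p_h)\in\mV_h$ for every $\ff\in\L^2(\O)^3$.

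The only subtlety to be careful about — and what I would flag as the main point rather than a genuine obstacle — is that Lemma~\ref{stability-lemma} was stated for the solution of \eqref{WVF3dis} with a given $\ff$, so one must observe that its proof in fact only used the Galerkin orthogonality/equation and the Aubin--Nitsche duality argument, both of which apply verbatim when $\ff=\cero$; thus the a priori bound genuinely yields the kernel estimate needed here. Once that is noted, the argument is immediate and no further computation is required. (Equivalently, one could phrase the whole thing via the Lax--Milgram / Babuška--Ne\v{c}as framework on the finite-dimensional space $\mV_h$, but the rank-nullity reduction is the cleanest since coercivity of $\mathcal{A}$ on $\mV_h$ is not assumed — only the discrete inf--sup-type stability from Lemma~\ref{stability-lemma}.)
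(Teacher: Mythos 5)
Your proposal is correct and follows essentially the same route as the paper: both observe that \eqref{WVF3dis} is a square linear system, so injectivity suffices, and then invoke the discrete stability of Lemma~\ref{stability-lemma} with $\ff=\cero$ to conclude that the kernel is trivial. The only cosmetic difference is that you spell out the rank--nullity reduction and the applicability of the stability bound to zero data more explicitly than the paper does.
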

\begin{proof}
Since the assembled discrete problem \eqref{WVF3dis} is a square linear system,
it is enough to establish uniqueness of solution. Considering $\ff=\cero$ and using $(\btheta_h,q_h):=(\bomega_h,p_h)$ as a test 
function in \eqref{WVF3dis},  the discrete stability result in Lemma \ref{stability-lemma} (which is valid assuming \eqref{bound1})
immediately implies that $\bomega_h=\cero$ and $p_h=0$, thus concluding the proof.
\end{proof}

\begin{remark}
When the  condition  \eqref{bound1} is satisfied,  we modify the stability proof of Lemma~\ref{stability-lemma} as follows:
From \eqref{stab02},  using the Young inequality on the right-hand side
\begin{align*}
\sigma\Big(1-\frac{2\Vert \boldsymbol\beta \Vert_{\infty,\O}^2}{\sigma \nu}\Big)\Vert\bomega_h\Vert_{0,\O}^2+\frac{1}{2}
\Vert\sqrt{\nu}\curl\bomega_h+\nabla p_h\Vert_{0,\O}^2 \leq 
\Vert \ff\Vert_{0,\O}^2. 
\end{align*}
In addition, applying Young's inequality once again, and appealing to \eqref{estL2:q}, we obtain the desired  stability result.

Note that in this case, we do not need a smallness condition on the mesh parameter $h$.
\end{remark}


\subsection{{\it A priori} error estimates}
In this subsection, using a classical duality argument we 
bound the error measured in the 
$\L^2$-norm by the error in the  norm $\|(\cdot,\cdot)\|_{\mV}$. Then, we 
establish an energy error estimate that eventually yields 
an optimal bound in $\L^2$. 

Let $(\bomega,p)\in\mV$ and $(\bomega_h,p_h)\in\mV_h$ be the unique
solutions to the continuous  and discrete problems (cf. \eqref{WVF3} and \eqref{WVF3dis}), respectively. 
Then, we obtain  
\begin{equation}\label{errorequation}
\mathcal{A}((\bomega-\bomega_h,p-p_h),(\btheta_h,q_h))=0\quad\forall(\btheta_h,q_h)\in\mV_h.
\end{equation}

\begin{lemma}[An $\L^2$-estimate]\label{converl2}
There exists $C>0$, independent of $h$, such that for 
$h$ small enough, and 
$\delta\in (1/2,1]$
$$
\Vert\bomega-\bomega_h\Vert_{0,\O}+\Vert p-p_h\Vert_{0,\O}\le
C\;h^{\delta}\; \Vert (\bomega-\bomega_h, p-p_h) \Vert_{\mV}.
$$ 
\end{lemma}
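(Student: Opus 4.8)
The plan is to use the standard Aubin--Nitsche duality argument, in the same spirit as the proof of Lemma~\ref{stability-lemma}. Introduce the auxiliary adjoint problem: find $(\widetilde\bomega,\widetilde p)\in\Z\times\Q$ such that
\begin{equation*}
\mathcal{A}((\btheta,q),(\widetilde\bomega,\widetilde p)) = \sigma(\bomega-\bomega_h,\btheta)_{0,\O}+(p-p_h,q)_{0,\O}\qquad\forall(\btheta,q)\in\Z\times\Q,
\end{equation*}
which is well-posed by the same arguments as for \eqref{WVF3} (the adjoint of a coercive bounded bilinear form). Assume the natural elliptic regularity $\widetilde\bomega\in\H^\delta(\curl;\O)$, $\widetilde p\in\H^{1+\delta}(\O)$ with
\begin{equation*}
\|\widetilde\bomega\|_{\H^\delta(\curl;\O)}+\|\widetilde p\|_{\H^{1+\delta}(\O)}\le C_{\mathrm{reg}}\bigl(\sigma^{1/2}\|\bomega-\bomega_h\|_{0,\O}+\|p-p_h\|_{0,\O}\bigr).
\end{equation*}

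Next I would test the adjoint problem with $(\btheta,q)=(\bomega-\bomega_h,p-p_h)$, so that the left-hand side becomes exactly $\sigma\|\bomega-\bomega_h\|_{0,\O}^2+\|p-p_h\|_{0,\O}^2$. Then use Galerkin orthogonality \eqref{errorequation} to subtract, for arbitrary $(\btheta_h,q_h)\in\mV_h$, the term $\mathcal{A}((\bomega-\bomega_h,p-p_h),(\btheta_h,q_h))=0$, giving
\begin{equation*}
\sigma\|\bomega-\bomega_h\|_{0,\O}^2+\|p-p_h\|_{0,\O}^2 = \mathcal{A}\bigl((\bomega-\bomega_h,p-p_h),(\widetilde\bomega-\btheta_h,\widetilde p-q_h)\bigr).
\end{equation*}
Now bound the right-hand side termwise using the explicit form \eqref{bili1}: the $\sigma$-term by Cauchy--Schwarz, the $(\sqrt\nu\curl\,\cdot+\nabla\cdot)$-term by Cauchy--Schwarz in $\L^2$, and the skew term $\nu^{-1/2}\int_\O((\bomega-\bomega_h)\times\bbbeta)\cdot(\sqrt\nu\curl(\widetilde\bomega-\btheta_h)+\nabla(\widetilde p-q_h))$ by $\|\bbbeta\|_{\infty,\O}$ times the obvious $\L^2$ factors. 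This yields
\begin{equation*}
\sigma\|\bomega-\bomega_h\|_{0,\O}^2+\|p-p_h\|_{0,\O}^2 \le C\,\|(\bomega-\bomega_h,p-p_h)\|_{\mV}\,\|(\widetilde\bomega-\btheta_h,\widetilde p-q_h)\|_{\mV},
\end{equation*}
where the constant absorbs $1$, $\sigma^{1/2}$ and $\nu^{-1/2}\|\bbbeta\|_{\infty,\O}$ exactly as in \eqref{stab06}.

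Finally, choose $\btheta_h=\CR_h\widetilde\bomega$ and $q_h=\Pi_h\widetilde p$ and invoke the interpolation estimates \eqref{prop2N}--\eqref{prop2pi}, noting that the $\mV$-norm of $(\widetilde\bomega-\CR_h\widetilde\bomega,\widetilde p-\Pi_h\widetilde p)$ is controlled (up to $\sigma^{1/2}$ and constants) by $\|\widetilde\bomega-\CR_h\widetilde\bomega\|_{\Z}+\|\widetilde p-\Pi_h\widetilde p\|_{\Q}\le C\,h^\delta(\|\widetilde\bomega\|_{\H^\delta(\curl;\O)}+\|\widetilde p\|_{\H^{1+\delta}(\O)})$. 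Combining with the regularity bound gives
\begin{equation*}
\sigma\|\bomega-\bomega_h\|_{0,\O}^2+\|p-p_h\|_{0,\O}^2 \le C\,h^\delta\,\|(\bomega-\bomega_h,p-p_h)\|_{\mV}\,\bigl(\sigma^{1/2}\|\bomega-\bomega_h\|_{0,\O}+\|p-p_h\|_{0,\O}\bigr),
\end{equation*}
and dividing through by the last factor (and using equivalence of $\ell^2$ and $\ell^1$ on two terms) delivers the claim. The main obstacle I anticipate is purely bookkeeping: tracking the $\sigma$- and $\nu$-dependent constants so that the right-hand side genuinely reproduces a multiple of $\|(\bomega-\bomega_h,p-p_h)\|_{\mV}$ and of the $\L^2$-error, and making sure the adjoint regularity assumption is stated cleanly (it is the same hypothesis already used implicitly via $\delta\in(1/2,1]$ in Lemma~\ref{stability-lemma}); there is no genuine analytic difficulty beyond what has already appeared.
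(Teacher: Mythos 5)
Your proposal is correct and follows essentially the same route as the paper: the same Aubin--Nitsche adjoint problem with right-hand side $\sigma(\bomega-\bomega_h,\btheta)_{0,\O}+(p-p_h,q)_{0,\O}$, the same regularity hypothesis \eqref{elliptic-regularity}, Galerkin orthogonality with $(\CR_h\widetilde\bomega,\Pi_h\widetilde p)$, and the interpolation estimates \eqref{prop2N}--\eqref{prop2pi} with $s=\delta$. The only cosmetic difference is that the paper keeps the $\sigma$- and $\nu$-dependent factors explicit in the final constant rather than absorbing them along the way.
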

\begin{proof}
We appeal again to the Aubin-Nitsche duality argument. 
For this, let us  
consider the adjoint continuous problem:
Find $(\widetilde{\bomega},\widetilde{p})\in\mV$ such that
\begin{equation}\label{estima2}
\mathcal{A}((\btheta,q),(\widetilde{\bomega},\widetilde{p}))= ( \sigma (\bomega-\bomega_h), \btheta )_{0,\O} + (p-p_h, q)_{0,\O} \quad \quad \forall(\btheta, q)\in \mV.
\end{equation}
In addition, let us suppose that \eqref{estima2} is well-posed
and that $\widetilde{\bomega}\in \H^{\delta}(\curl;\O)$
and $\widetilde{p}\in \H^{1+\delta}(\O)$, and there exists a constant $C_{\mathrm{reg}}>0$,
such that 
\begin{equation}\label{elliptic-regularity}
\|\widetilde{\bomega}\|_{\H^{\delta}{(\curl,\O)}} + \|\widetilde{p}\|_{\H^{1+\delta}{(\O)}}
\leq C_{\mathrm{reg}}  ( \sigma^{1/2} \|\bomega-\bomega_h\|_{0,\O} + \|p-p_h\|_{0,\O}).
\end{equation}

Next, we proceed to test the adjoint problem \eqref{estima2}
against $(\btheta,q):=(\bomega-\bomega_h,p-p_h)$ and to use the
error equation \eqref{errorequation} with
$(\btheta_h, q_h)=(\CR_h \widetilde{\bomega},\Pi_h \widetilde{p})\in \mV_h$
to obtain that
\begin{align*}
& \sigma \;\|\bomega-\bomega_h\|^2_{0,\O} + \|p-p_h\|_{0,\O}^2 =\mathcal{A}(( \bomega-\bomega_h,p-p_h),(\widetilde{\bomega},\widetilde{p}))\\
&=\mathcal{A}((\bomega-\bomega_h,p-p_h),(\widetilde{\bomega}-\btheta_h,\widetilde{p}-q_h))\\
&\leq \Big(1+\sigma^{1/2} + \sigma^{-1/2}\; \nu^{-1/2}   \Vert \boldsymbol\beta \Vert_{\infty,\O}\Big) \;  \Vert (\bomega-\bomega_h, p-p_h) \Vert_{\mV}\Vert (\widetilde{\bomega}-\btheta_h, \widetilde{p}-q_h) \Vert_{\mV}  \\
&\leq   C_{\mathrm{reg}}C_{\mathrm{apx}} h^{\delta} \Big(1+\sigma^{1/2} +  (\sigma\nu)^{-1/2}  \Vert \boldsymbol\beta \Vert_{\infty,\O}\Big)     
\Vert(\bomega-\bomega_h, p-p_h) \Vert_{\mV}  \Big(\|\bomega-\bomega_h\|^2_{0,\O} + \|p-
p_h\|^2_{0,\O}\Big)^{1/2}.
\end{align*}
Here,  we have used  \eqref{prop2N} and \eqref{prop2pi}
with $s=\delta$ and  this completes  the rest of the proof.
\end{proof}

An error estimate in the energy norm can also be derived 
in the following manner.
\begin{theorem}\label{conver}
Assume 
that problem \eqref{WVF3} has a unique solution $(\bomega,p)$ satisfying
the additional regularity
$\bomega\in\H^{s}(\curl;\O)$ and $p\in\H^{1+s}(\O)$,
for some $s\in(1/2,k]$. Then, there exists $C>0$,
independent of $h$, such that the following error estimates hold for $h$ small enough:
\begin{align*}
\Vert(\bomega-\bomega_h, p-p_h)\Vert_{\mV}
& \leq C\,h^{s} \left(\Vert\bomega\Vert_{\H^{s}(\curl;\O)}+\Vert p\Vert_{\H^{1+s}(\O)}\right),\\
\|\bomega-\bomega_h\|_{0,\O} + \|p-p_h\|_{0,\O}   & \leq C \, h^{s+\delta} \left(\Vert\bomega\Vert_{\H^{s}(\curl;\O)}+\Vert p\Vert_{\H^{1+s}(\O)}\right),\end{align*}
where $(\bomega_h,p_h)\in\mV_h$ is the unique
solution to  \eqref{WVF3dis}  with $s=\delta,$ if $s\in(1/2,1)$  and $\delta=1,$ if $s\in [1,k].$
\end{theorem}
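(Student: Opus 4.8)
The plan is to follow the classical Céa-type argument for the energy estimate, and then invoke Lemma~\ref{converl2} to upgrade it to $\L^2$. For the first bound, I would start from the discrete stability machinery already developed in Lemma~\ref{stability-lemma}: under assumption $(A)$ and for $h\le h_0$, the map $(\btheta_h,q_h)\mapsto \mathcal{A}((\btheta_h,q_h),(\btheta_h,q_h))$ controls $\|(\btheta_h,q_h)\|_{\mV}^2$ up to the constant $\gamma_0>0$ appearing there (equivalently, $\mathcal{A}$ satisfies a discrete inf-sup condition on $\mV_h$ with $h$-independent constant). Then, writing $(\bomega-\bomega_h,p-p_h) = (\bomega-\btheta_h,p-q_h) + (\btheta_h-\bomega_h,q_h-p_h)$ for an arbitrary $(\btheta_h,q_h)\in\mV_h$, I use the Galerkin orthogonality \eqref{errorequation} together with the boundedness \eqref{norma2} (plus the $\bbbeta$-term, which contributes the factor $1+\sigma^{1/2}+(\sigma\nu)^{-1/2}\|\bbbeta\|_{\infty,\O}$ exactly as in the proof of Lemma~\ref{converl2}) to bound $\|(\btheta_h-\bomega_h,q_h-p_h)\|_{\mV}$ by a constant times $\|(\bomega-\btheta_h,p-q_h)\|_{\mV}$. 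This yields the quasi-optimality
$$
\Vert(\bomega-\bomega_h, p-p_h)\Vert_{\mV} \le C \inf_{(\btheta_h,q_h)\in\mV_h}\Vert(\bomega-\btheta_h, p-q_h)\Vert_{\mV}.
$$

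Next I would choose the specific interpolants $(\btheta_h,q_h) = (\CR_h\bomega,\Pi_h p)$ and estimate $\|(\bomega-\CR_h\bomega,p-\Pi_h p)\|_{\mV}$ term by term using the definition \eqref{eq:normV}: the $\sigma\|\cdot\|_{0,\O}^2$ and curl contributions are controlled by \eqref{prop2N} with exponent $s$, the $\|\nabla(p-\Pi_h p)\|_{0,\O}$ and $\|p-\Pi_h p\|_{0,\O}$ contributions by \eqref{prop2pi} with exponent $s$, and the mixed term $\|\sqrt{\nu}\curl(\bomega-\CR_h\bomega)+\nabla(p-\Pi_h p)\|_{0,\O}$ by the triangle inequality and both interpolation bounds. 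Since the regularity hypothesis is $\bomega\in\H^s(\curl;\O)$, $p\in\H^{1+s}(\O)$ with $s\in(1/2,k]$, the interpolation estimates apply directly and give the rate $h^s(\|\bomega\|_{\H^s(\curl;\O)}+\|p\|_{\H^{1+s}(\O)})$. This establishes the first displayed inequality.

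For the second (improved $\L^2$) estimate, I simply compose Lemma~\ref{converl2} with the energy estimate just obtained: Lemma~\ref{converl2} gives $\|\bomega-\bomega_h\|_{0,\O}+\|p-p_h\|_{0,\O}\le C\,h^{\delta}\|(\bomega-\bomega_h,p-p_h)\|_{\mV}$, and substituting the first bound on the right produces the factor $h^{s+\delta}$. The bookkeeping on $\delta$ matches the statement: when $s\in(1/2,1)$ the adjoint solution inherits only regularity index $s$, so one takes $\delta=s$ and obtains $h^{2s}$; when $s\ge 1$ the adjoint problem enjoys full $\H^1$-type regularity, so $\delta=1$ and the rate is $h^{s+1}$.

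The only genuinely delicate point is the discrete stability / inf-sup step underpinning the quasi-optimality, since $\mathcal{A}$ is not coercive on $\mV_h$ without either the smallness condition \eqref{bound1} or the restriction $h\le h_0$ — but this is precisely what Lemma~\ref{stability-lemma} has already secured, so the argument reduces to assembling standard pieces; I would also need to confirm that the regularity shift used in Lemma~\ref{converl2} is available (assumed), and to carry the $\bbbeta$-dependent constant through cleanly, but none of this requires new ideas beyond what the preceding lemmas provide.
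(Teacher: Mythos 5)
Your overall architecture (an energy estimate first, then composition with Lemma~\ref{converl2} to gain the extra $h^{\delta}$) is the right one, and the second half of your argument --- choosing $(\CR_h\bomega,\Pi_h p)$, invoking \eqref{prop2N}--\eqref{prop2pi}, and multiplying the energy bound by the $h^{\delta}$ from Lemma~\ref{converl2} --- coincides with the paper. The gap is in the first half. You assert that Lemma~\ref{stability-lemma} gives discrete coercivity, i.e.\ that $\mathcal{A}((\btheta_h,q_h),(\btheta_h,q_h))$ controls $\gamma_0\|(\btheta_h,q_h)\|_{\mV}^2$, ``equivalently'' a discrete inf-sup condition with $h$-independent constant. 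Neither is secured by that lemma. Coercivity is simply false without \eqref{bound1}: by \eqref{norma} the quadratic form is indefinite once $2\Vert\bbbeta\Vert_{\infty,\O}^2\ge\nu\sigma$, and shrinking $h$ cannot restore positivity of a pointwise quadratic form, since refinement only enlarges $\mV_h$. What $h\le h_0$ buys in Lemma~\ref{stability-lemma} is an a priori bound on the particular discrete solution for right-hand sides of the special form $\mathcal{F}(\btheta_h,q_h)=(\ff,\sqrt{\nu}\curl\btheta_h+\nabla q_h)_{0,\O}$; the Babu\v{s}ka quasi-optimality you invoke needs the genuinely stronger statement that $\sup_{(\bv_h,r_h)\in\mV_h}\mathcal{A}((\bw_h,s_h),(\bv_h,r_h))/\|(\bv_h,r_h)\|_{\mV}\ge\gamma_0\|(\bw_h,s_h)\|_{\mV}$ for \emph{arbitrary} $(\bw_h,s_h)\in\mV_h$, which would require its own duality-based proof and is not in the paper.

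The paper avoids this entirely with a Schatz-type absorption argument: by Galerkin orthogonality \eqref{errorequation}, $\mathcal{A}(e,e)=\mathcal{A}(e,(\bomega-\CR_h\bomega,p-\Pi_h p))\le C h^{s}\|e\|_{\mV}\bigl(\Vert\bomega\Vert_{\H^{s}(\curl;\O)}+\Vert p\Vert_{\H^{1+s}(\O)}\bigr)$, where $e$ denotes the full error in $\mV$; on the left, the G\aa{}rding-type inequality \eqref{norma} gives $\mathcal{A}(e,e)\ge\tfrac12\|e\|_{\mV}^2-C\bigl(\sigma\|\bomega-\bomega_h\|_{0,\O}^2+\|p-p_h\|_{0,\O}^2\bigr)$, and the indefinite $\L^2$ part is absorbed using Lemma~\ref{converl2} itself, leaving $\tfrac12\bigl(1-Ch^{2\delta}\bigr)\|e\|_{\mV}^2$ on the left, positive for $h$ small. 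If you wish to keep your route you must first establish the discrete inf-sup condition by rerunning the Aubin--Nitsche argument of Lemma~\ref{stability-lemma} for an arbitrary discrete function; otherwise the paper's direct absorption argument reaches the same conclusion using only ingredients already proved.
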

\begin{proof}
Now rewrite  \eqref{errorequation}, then use boundedness,  (\ref{prop2N}) and (\ref{prop2pi}) to arrive  at
\begin{align}
&\mathcal{A}((\bomega-\bomega_h,p-p_h),(\bomega-\bomega_h,p-p_h))= \mathcal{A}((\bomega-\bomega_h,p-p_h),(\bomega-\CR_{h}\bomega,p-\Pi_h p))
\nonumber\\
&\qquad \qquad\leq  C\; \Vert(\bomega-\bomega_h,p-p_h)\Vert_{\mV}\; \Vert(\bomega-\CR_{h}\bomega,p-\Pi_h p)\Vert_{\mV}\label{error-1}\\
&\qquad \qquad \leq C\,h^s \Vert(\bomega-\bomega_h,p-p_h)\Vert_{\mV} \left(\Vert\bomega\Vert_{\H^{s}(\curl;\O)}+\Vert p\Vert_{\H^{1+s}(\O)}\right).  \nonumber
\end{align}
Next, for the term on the left-hand side of \eqref{error-1},  apply \eqref{norma} 
to obtain
\begin{align*}
& \mathcal{A}((\bomega-\bomega_h,p-p_h),(\bomega-\bomega_h,p-p_h)) \\
& \geq 1/2\;\Big(\Vert (\bomega-\bomega_h, p-p_h)\Vert^2_{\mV} - (1+\sigma^{-1}\;\nu^{-1}\;\|\boldsymbol\beta\|_{\infty})\;
(\sigma\Vert \bomega-\bomega_h\Vert_{0,\O}^2 + \Vert p-p_h\Vert_{0,\O}^2) \Big).
\end{align*}
Then, a use of Lemma \ref{converl2} yields 
\begin{align*}
& \mathcal{A}((\bomega-\bomega_h,p-p_h),(\bomega-\bomega_h,p-p_h)) \\
& \geq 1/2\;\Big( 1- C   (1+\sigma^{-1}\;\nu^{-1}\;\|\boldsymbol\beta\|_{\infty})\;h^{2\delta} \Big) \;
 \Vert (\bomega-\bomega_h, p-p_h)\Vert^2_{\mV}.
\end{align*}
Choosing $h$ small, the term within brackets, $ ( 1- C   (1+\sigma^{-1}\;\nu^{-1}\;\|\boldsymbol\beta\|_{\infty})\;h^{2\delta})$, can be made  positive; therefore, concluding  the proof.
\end{proof}

As a consequence of  Theorem~\ref{converl2},  with $e_{\bomega}:= \bomega-\bomega_h $ and $e_{p}:= p-p_h$, we obtain the following inf-sup condition:
There exists $\gamma_0>0,$ independent of $h,$ such that,
\begin{equation}\label{inf-sup-1}
\sup_{(\btheta,q) \in \mV } \;\frac{\mathcal{A}((e_{\bomega}, e_ p),(\btheta, q ))} {\Vert (\btheta,q)\Vert_{\mV} } \geq \gamma_{0}\;
\Vert (e_{\bomega}, e_p)\Vert_{\mV}. 
\end{equation}

\subsection{Convergence of the post-processed velocity}\label{velo}
Let $(\bomega_h,p_h)\in\mV_h$ be 
the unique solution of the discrete problem \eqref{WVF3dis}. Then following \eqref{repu2},  we can recover the 
discrete velocity as the following element-wise discontinuous function for each $T\in\cT_{h}(\O)$:
\begin{equation}\label{repudisc}
\bu_h|_{T}:=\sigma^{-1}\left(\mathcal{P}_h\ff-\nu^{-1/2} \bomega_h \times {\boldsymbol\beta}-(\sqrt{\nu}\curl\bomega_h+\nabla p_h)\right)|_{T},
\end{equation}
where $\mathcal{P}_h:\L^2(\O)^3\to \bU_h$ is the $\L^{2}$-orthogonal projector, with
\begin{equation}\label{discont}
\bU_h:=\{\bv_h\in\L^2(\O)^3:
\bv_h|_T\in\cP_{k-1}(T)^{3}\quad\forall T\in\cT_{h}(\O)\}.
\end{equation}
Consequently, we can state an error estimate
for the post-processed velocity. 
\begin{theorem}\label{th:cvu}
Let $(\bomega,p)\in\mV$ 
be the unique solution of \eqref{WVF3}, and $(\bomega_h,p_h)\in\mV_h$ 
be the unique solution of \eqref{WVF3dis}. Assume that $\bomega\in\H^{s}(\curl;\O)$, $p\in\H^{1+s}(\O)$
and $\ff\in\H^{s}(\O)^{3}$,
for some $s\in(1/2,k]$. Then, there exists a positive constant $C$,
independent of $h$, such that
$$\Vert\bu-\bu_h\Vert_{0,\O}\le
C h^{s}\left(\Vert\ff\Vert_{\H^{s}(\O)}+\Vert\bomega\Vert_{\H^{s}(\curl;\O)}+\Vert p\Vert_{\H^{1+s}(\O)}\right).
$$
\end{theorem}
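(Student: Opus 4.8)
The plan is to bound $\bu - \bu_h$ directly by comparing the exact representation \eqref{repu2} with the discrete one \eqref{repudisc}, reducing everything to errors already controlled by Theorem~\ref{conver}. First I would write, on each $T\in\cT_h(\O)$,
\[
\bu - \bu_h = \sigma^{-1}\Big( (\ff - \mathcal{P}_h\ff) - \nu^{-1/2}(\bomega - \bomega_h)\times\bbbeta - \big(\sqrt{\nu}\curl(\bomega-\bomega_h) + \nabla(p-p_h)\big)\Big),
\]
then take $\L^2(\O)$-norms, apply the triangle inequality, and use $\Vert\bbbeta\Vert_{\infty,\O}<\infty$ together with the trivial bounds $\Vert\sqrt{\nu}\curl(\bomega-\bomega_h)+\nabla(p-p_h)\Vert_{0,\O}\le\Vert(\bomega-\bomega_h,p-p_h)\Vert_{\mV}$ and $\Vert\bomega-\bomega_h\Vert_{0,\O}\le\sigma^{-1/2}\Vert(\bomega-\bomega_h,p-p_h)\Vert_{\mV}$.

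The three resulting terms are then handled as follows. The projection term is bounded using the standard $\L^2$-orthogonal projection estimate $\Vert\ff-\mathcal{P}_h\ff\Vert_{0,\O}\le C h^s\Vert\ff\Vert_{\H^s(\O)}$ for $\ff\in\H^s(\O)^3$ with $s\in(1/2,k]$ (note $\mathcal{P}_h$ maps onto piecewise polynomials of degree $k-1$, so this holds for $s\le k$). The other two terms are both controlled by $\Vert(\bomega-\bomega_h,p-p_h)\Vert_{\mV}$, which by the energy estimate in Theorem~\ref{conver} is $\le C h^s(\Vert\bomega\Vert_{\H^s(\curl;\O)}+\Vert p\Vert_{\H^{1+s}(\O)})$. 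Collecting the three contributions and absorbing $\sigma^{-1}$, $\sigma^{-1/2}$, $\nu^{\pm1/2}$ and $\Vert\bbbeta\Vert_{\infty,\O}$ into the generic constant $C$ gives exactly the claimed bound
\[
\Vert\bu-\bu_h\Vert_{0,\O}\le C h^s\big(\Vert\ff\Vert_{\H^s(\O)}+\Vert\bomega\Vert_{\H^s(\curl;\O)}+\Vert p\Vert_{\H^{1+s}(\O)}\big).
\]

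There is essentially no hard obstacle here: the proof is a routine combination of the consistency of the discrete velocity reconstruction with the already-established energy estimate. The only minor point requiring care is verifying that the exact velocity $\bu$ indeed satisfies \eqref{repu2} pointwise (which follows from \eqref{repu} and hence from the equivalence of \eqref{WVF3} with the strong form under the assumed regularity) and that the cross-product term $\nu^{-1/2}(\bomega-\bomega_h)\times\bbbeta$ is estimated in $\L^2$ using $\bbbeta\in\L^\infty(\O)^3$ rather than any smoothness of $\bbbeta$. One should also note that no smallness condition on $h$ beyond the one already needed in Theorem~\ref{conver} (for the energy estimate to hold) is required, and the constant depends on $\sigma$, $\nu$, $\Vert\bbbeta\Vert_{\infty,\O}$ but not on $h$.
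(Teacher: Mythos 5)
Your proposal is correct and follows essentially the same route as the paper: subtract the discrete reconstruction \eqref{repudisc} from the exact representation \eqref{repu2}, apply the triangle inequality, bound the projection term by the standard estimate for $\mathcal{P}_h$ and the remaining two terms via the energy estimate of Theorem~\ref{conver}, using $\bbbeta\in\L^\infty(\O)^3$ for the cross-product term. The paper states this tersely; your write-up merely fills in the same details.
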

\begin{proof}
From \eqref{repu2}, \eqref{repudisc}, and triangle inequality, it follows that
\begin{align*}
&\Vert\bu-\bu_h\Vert_{0,\O}\\
&\qquad \le\frac{1}{\sigma}\bigl(\Vert\ff-\mathcal{P}_{h}\ff\Vert_{0,\O}
+\Vert\sqrt{\nu}\curl(\bomega_h-\bomega)-\nabla(p-p_h)\Vert_{0,\O}
+\frac{1}{\sqrt{\nu}}\Vert(\bomega-\bomega_h)\times {\boldsymbol\beta} \Vert_{0,\O}\bigr).
\end{align*}
Then, the result follows from standard estimates
satisfied by $\mathcal{P}_h$, as well as from Theorem~\ref{conver}.
\end{proof}

An issue with the post-process \eqref{repudisc} is that it requires 
numerical differentiation (taking the curl of $\bomega_h$ and the gradient of $p_h$).
A possible way to getting around this problem is to  set 
$$\widetilde{\bU}_h:=\{\bv_h\in\H_0^1(\Omega)^3:
\bv_h|_T\in\cP_{k}(T)^{3}\quad\forall T\in\cT_{h}(\O)\},$$  
and recover the discrete velocity in this space, using   
the discrete versions of  \eqref{eq:constitutive}, \eqref{eq:mass}, and \eqref{eq:bc}. 

This results in finding $\tilde{\bu}_h\in \widetilde{\bU}_h $ such 
that 
\begin{equation}\label{eq:vel3}
\nu \int_\Omega  \curl\tilde{\bu}_h\cdot\curl\bv_h + 
\nu \int_\Omega \vdiv\tilde{\bu}_h\,\vdiv\bv_h
 = \sqrt{\nu}\int_\Omega \bomega_h\cdot\curl \bv_h\qquad \forall \bv_h\in\widetilde{\bU}_h.
\end{equation} 
The discrete velocity produced by \eqref{eq:vel3} gives not only 
$$\|\sqrt{\nu}\curl(\bu-\tilde{\bu}_h)\|_{0,\Omega} + \|\sqrt{\nu}\vdiv(\bu-\tilde{\bu}_h)\|_{0,\Omega} = \mathcal{O}(h^s),$$
but also, thanks to the identity relating vector Laplacians with curl and divergence $-\Delta \Phi = \curl\curl \Phi - \nabla(\vdiv\Phi)$, 
one can show, using duality arguments, that 
$$\|\bu-\tilde{\bu}_h\|_{0,\Omega} = \mathcal{O}(h^{s+\delta}),$$
where  $s$ and $\delta$ are given as in Theorem~\ref{conver}.

\section{{\it A posteriori} error analysis  for the 2D problem}\label{sec:apost}
In this section, we 
 propose a residual-based {\it a posteriori} error estimator. For sake of clarity, we restrict our 
analysis to the two-dimensional case (the extension to 3D can be carried out in a similar fashion). 
Therefore, the functional space $\Z$ considered in the {\it a priori} error  analysis now becomes
$\Z:=\H^{1}(\O)$, 
and
\begin{equation}\label{esp1}
\Z_h:=\{\theta_h\in\Z: \theta_h|_T\in\cP_k(T)\quad\forall T\in\cT_{h}(\O)\}.
\end{equation}
We note that in the 2D case, the duality arguments presented
in Section~\ref{sec:FE}, hold  for any $\delta\in(0,1]$.
In particular, this fact will be considered in the definition
of the local {\it a posteriori} error indicator.
Moreover, to keep the notation clear, in this section
we will denote by $\CR_{h}$ the usual Lagrange interpolant in $\Z_h$.

For each $T\in\CT_h$ we let $\CE(T)$ be the set of edges of $T$, and we denote by $\CE_h$
the set of all edges in $\CT_h$, that is 
$$\CE_h=\CE_h(\O)\cup\CE_h(\G),$$
where $\CE_h(\O):=\{e\in\CE_h: e\subset\O\}$, and
$\CE_h(\G):=\{e\in\CE_h: e\subset\G\}$.
In what follows, $h_e$ stands for the diameter of a given edge
$e\in\CE_h$, $\bt_{e}=(-n_2,n_1)$, where $\bn_{e}=(n_1,n_2)$
is a fix unit normal vector of $e$.
Now, let $q\in\LO$ such that $q|_{T}\in C(T)$ for each $T\in\CT_h$,
then, given $e\in\CE_h(\O)$, we denote by $[q]$ the jump of $q$ across $e$,
that is $[q]:=(q|_{T'})|_{e}-(q|_{T''})|_{e}$, where $T'$ and $T''$ are the triangles
of $\CT_{h}$ sharing the edge $e$. Moreover,
let $\bv\in\LO^2$ such that $\bv|_{T}\in C(T)^2$ for each $T\in\CT_h$.
Then, given $e\in\CE_h(\O)$, we denote by $[\bv\cdot\bt]$ the tangential
jump of $\bv$ across $e$,
that is, $[\bv\cdot\bt]:=\left((\bv|_{T'})|_{e}-(\bv|_{T''})|_{e}\right)\cdot\bt_e$,
where $T'$ and $T''$ are the triangles
of $\CT_{h}$ sharing the edge $e$.

Next, let $k\ge 1$ be an integer and let $\Z_h,\Q_h$ and $\bU_h$
be given by \eqref{esp1}, \eqref{space2}, and \eqref{discont}, respectively.
Let $(\omega,p)\in\Z\times\Q$ and
$(\omega_h,p_h)\in\Z_h\times\Q_h$ be the unique solutions
to the continuous and discrete problems \eqref{WVF3} and
\eqref{WVF3dis} with data satisfying $\ff\in\LO^2$ and $\ff\in\H^1(T)^2$ for each $T\in\CT_h$.
We introduce for each $T\in\CT_h$ the local {\it a posteriori} error indicator for $\delta \in (0,1]$ as
\begin{align*}
\widetilde{\betta}_{T}^2&:=h_{T}^{2(1+\delta)}\Vert\rot({\sqrt{\nu}\curl \omega_{h}+\nu^{-1/2} \omega_h \times {\boldsymbol\beta}-\ff})- \nu^{-1/2}\;\sigma\omega_h\Vert_{0,T}^2 \\
&\quad +h_{T}^{2(1+\delta)}\Vert\vdiv(\ff-\nu^{-1/2} \omega_h \times {\boldsymbol\beta}-\nabla p_h)\Vert_{0,T}^2  +\sum_{e\in\CE(T)}\!\!\!h_{e}^{(1+2\delta)}\Vert[({\sqrt{\nu}\curl \omega_{h}+\nu^{-1/2} \omega_h \times{\boldsymbol\beta}-\ff})\cdot\bt]\Vert_{0,e}^{2}\\
&\quad  +\sum_{e\in\CE(T)}\!\!\!h_{e}^{(1+2\delta)}\Vert[(\ff-\nu^{-1/2} \omega_h \times {\boldsymbol\beta}
-\nabla p_h)\cdot\bn]\Vert_{0,e}^{2}\\
&=:h_{T}^{2(1+\delta)}\Big(\Vert \mathcal{R}_1\Vert_{0,T}^2 
+\Vert\mathcal{R}_2\Vert_{0,T}^2 \Big) +\sum_{e\in\CE(T)}\!\!\!h_{e}^{(1+2\delta)} \Big( \Vert[\mathcal{J}_{h,1}\cdot\bt]\Vert_{0,e}^{2}
+ \Vert[\mathcal{J}_{h,2}\cdot\bn]\Vert_{0,e}^{2}\Big),
\end{align*}
and define its global counterpart as 
\begin{equation}\label{globalestimator2}
\tilde\betta:=\left\{\sum_{T\in\CT_h}\tilde\betta_{T}^2\right\}^{1/2}.
\end{equation}
Let us now establish reliability and quasi-efficiency of \eqref{globalestimator2}. 

\subsection{Reliability}
This subsection focuses on proving the reliability 
of the estimator 
in the $\L^2$-norm, and we note that this bound holds for $\delta \in (0,1]$. 
\begin{theorem}\label{th:reliability}
There exists a positive constant $C_{\mathrm{rel}}$,
independent of the discretisation parameter $h$, such that 
\begin{equation}\label{rel-L2-norm}
\Vert \sigma^{1/2} (\omega-\omega_h )\Vert_{0,\Omega}
+ \Vert p-p_h\Vert_{0,\Omega}  \leq {C}_{\mathrm{rel}} \;\widetilde{\betta}.\end{equation}

\end{theorem}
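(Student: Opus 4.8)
The plan is to use a duality (Aubin–Nitsche) argument of the same type as in Lemma~\ref{converl2}, but now extracting a computable upper bound from the residuals rather than the interpolation error. First I would introduce the auxiliary adjoint problem: find $(\widetilde\omega,\widetilde p)\in\mV$ such that
\begin{equation*}
\mathcal{A}((\theta,q),(\widetilde\omega,\widetilde p))=\sigma(\omega-\omega_h,\theta)_{0,\Omega}+(p-p_h,q)_{0,\Omega}\qquad\forall(\theta,q)\in\mV,
\end{equation*}
and assume the elliptic regularity shift $\widetilde\omega\in\H^{\delta}(\curl;\Omega)$, $\widetilde p\in\H^{1+\delta}(\Omega)$ with the stability bound \eqref{elliptic-regularity}, valid now for any $\delta\in(0,1]$ in the 2D setting. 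Testing this identity with $(\theta,q)=(\omega-\omega_h,p-p_h)$ gives
\begin{equation*}
\sigma^{1/2}\Vert\omega-\omega_h\Vert_{0,\Omega}^2+\Vert p-p_h\Vert_{0,\Omega}^2=\mathcal{A}((\omega-\omega_h,p-p_h),(\widetilde\omega,\widetilde p)),
\end{equation*}
and by Galerkin orthogonality \eqref{errorequation} we may subtract $\mathcal{A}((\omega-\omega_h,p-p_h),(\CR_h\widetilde\omega,\Pi_h\widetilde p))$, so the right-hand side becomes $\mathcal{A}((\omega-\omega_h,p-p_h),(\widetilde\omega-\CR_h\widetilde\omega,\widetilde p-\Pi_h\widetilde p))$.

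The next step is to rewrite this last quantity, using the continuous problem \eqref{WVF3}, as a residual functional evaluated at $(\widetilde\omega-\CR_h\widetilde\omega,\widetilde p-\Pi_h\widetilde p)$: namely $\mathcal{F}(\widetilde\omega-\CR_h\widetilde\omega,\widetilde p-\Pi_h\widetilde p)-\mathcal{A}((\omega_h,p_h),(\widetilde\omega-\CR_h\widetilde\omega,\widetilde p-\Pi_h\widetilde p))$. I would then integrate by parts element by element to exhibit the element residuals $\mathcal{R}_1$, $\mathcal{R}_2$ and the edge jumps $[\mathcal{J}_{h,1}\cdot\bt]$, $[\mathcal{J}_{h,2}\cdot\bn]$: the term tested against $\sqrt{\nu}\curl(\cdot)$ produces, after integration by parts, the $\rot$ of $\sqrt{\nu}\curl\omega_h+\nu^{-1/2}\omega_h\times\bbbeta-\ff$ together with the tangential jumps across interior edges, while the term tested against $\nabla(\cdot)$ produces the divergence of $\ff-\nu^{-1/2}\omega_h\times\bbbeta-\nabla p_h$ together with the normal jumps; the $\sigma\omega_h\cdot(\cdot)$ contribution and the $\nu^{-1/2}\sigma$ scaling are what combine into the $\nu^{-1/2}\sigma\omega_h$ correction inside $\mathcal{R}_1$. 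Applying Cauchy–Schwarz on each element and edge, and then the approximation properties of the Lagrange/Clément-type interpolant $\CR_h$ and of $\Pi_h$ — with the sharp local bounds $\Vert\widetilde\omega-\CR_h\widetilde\omega\Vert_{0,T}\lesssim h_T^{1+\delta}\Vert\widetilde\omega\Vert_{\H^{1+\delta}(\omega_T)}$ and the analogous $h_e^{1/2+\delta}$ estimate on edges (and likewise for $\widetilde p-\Pi_h\widetilde p$) — bounds the residual functional by $\widetilde\betta$ times $\big(\Vert\widetilde\omega\Vert_{\H^{\delta}(\curl;\Omega)}+\Vert\widetilde p\Vert_{\H^{1+\delta}(\Omega)}\big)$. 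Invoking \eqref{elliptic-regularity} to absorb the dual-norm factor $\big(\sigma^{1/2}\Vert\omega-\omega_h\Vert_{0,\Omega}+\Vert p-p_h\Vert_{0,\Omega}\big)$ and dividing, one obtains \eqref{rel-L2-norm} with $C_{\mathrm{rel}}=C_{\mathrm{reg}}C_{\mathrm{apx}}\cdot(\text{data-dependent constant})$.

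The main obstacle I anticipate is the careful bookkeeping of the integration by parts that splits $\mathcal{A}((\omega_h,p_h),\cdot)-\mathcal{F}(\cdot)$ into exactly the element and edge terms appearing in $\widetilde\betta$: one has to be attentive to the fact that $\curl\omega_h$ and $\nabla p_h$ are not globally $\H(\mathrm{div})$ or $\H(\mathrm{curl})$ conforming (only $\omega_h\in\H^1$ and $p_h\in\H^1$ are conforming), so the boundary terms from neighbouring elements do not cancel and genuinely produce the jump terms; also the weights $h_T^{2(1+\delta)}$ and $h_e^{1+2\delta}$ must be matched precisely against the interpolation powers $h_T^{1+\delta}$ and $h_e^{1/2+\delta}$ coming from the fractional regularity $\delta$, and a finite-overlap argument on the patches $\omega_T$ is needed to pass from the sum of local norms $\Vert\widetilde\omega\Vert_{\H^{1+\delta}(\omega_T)}$ back to the global norm $\Vert\widetilde\omega\Vert_{\H^{\delta}(\curl;\Omega)}$. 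Everything else — Cauchy–Schwarz, triangle inequality, absorbing constants — is routine.
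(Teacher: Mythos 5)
Your proposal is correct and follows essentially the same route as the paper: the duality argument with the adjoint problem, Galerkin orthogonality against $(\CR_h\widetilde{\omega},\Pi_h\widetilde{p})$, elementwise integration by parts to expose the residuals $\mathcal{R}_1,\mathcal{R}_2$ and the edge jumps, and finally Cauchy--Schwarz combined with the interpolation estimates \eqref{prop2N}, \eqref{prop2pi} and the regularity bound \eqref{elliptic-regularity}. The only slip is cosmetic: testing the adjoint problem with the error yields $\sigma\Vert\omega-\omega_h\Vert_{0,\Omega}^2$, not $\sigma^{1/2}\Vert\omega-\omega_h\Vert_{0,\Omega}^2$, which of course still delivers \eqref{rel-L2-norm}.
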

\begin{proof}
Note that
\begin{equation}\label{reliability-1}
\mathcal{A}((e_{\omega}, e_ p),(\theta, q )) = \mathcal{R}(\theta, q ),
\end{equation}
where the residual operator $\mathcal{R}: \Z \times Q \mapsto \R$  is given by
\begin{align*}
\mathcal{R}(\theta, q )&= \mathcal{F}(\theta,q)- \mathcal{A}((\omega_h,p_h),(\theta,q)) \\
& = \bigl(\ff-(\sqrt{\nu}\curl\omega_h+\nabla p_h)-\nu^{-1/2} (\omega_h\;\times\bbbeta),    (\sqrt{\nu}\curl\theta+\nabla q) \bigr)_{0,\O}- \sigma (\omega_h, \theta)_{0,\O}.
\end{align*}
Integration by parts on this residual yields
\begin{align*}
\mathcal{R}(\theta, q) &= \sum_{T\in \CT_h} ( \rot ( {\sqrt{\nu}\curl \omega_{h}+\nu^{-1/2} \omega_h \times {\boldsymbol\beta}-\ff}) - \nu^{-1/2}\;\sigma \omega_h, \sqrt{\nu}\;\theta)_{0,T}\\
&\quad -  \sum_{e\in\CE_h} \langle (\sqrt{\nu}\curl\omega_h+\nu^{-1/2} \;\omega_h\;\times\bbbeta-\ff)\cdot \bt, \sqrt{\nu}\;\theta \rangle_{0,e}\\
&\quad - \sum_{T\in \CT_h} ( \vdiv ( \ff-\nu^{-1/2} \omega_h \;\times{\boldsymbol\beta}-\nabla p_h),  q )_{0,T} + \sum_{e\in\CE_h} \langle (\ff-\nu^{-1/2} \;\omega_h\;\times\bbbeta-\nabla p_h)\cdot \bn, q \rangle_{0,e}\\
&= \sum_{T\in \CT_h} \Big( ( \mathcal{R}_1, \sqrt{\nu}\;\theta)_{0,T} + ( \mathcal{R}_{2},  q )_{0,T}\Big)  +   \sum_{e\in\CE_h}  \Big(  \langle \mathcal{J}_{h,1}\cdot \bt, \sqrt{\nu}\;\theta \rangle_{0,e}   + \langle \mathcal{J}_{h,2}\cdot \bn, q \rangle_{0,e}\Big).
\end{align*}

For the estimate \eqref{rel-L2-norm}, an appeal to the
Aubin-Nitsche argument, using \eqref{estima2} with $(\theta,q)= (\omega-\omega_h, p-p_h)$ 
and 
$(\theta_h, q_h)=(\CR_h \widetilde{\omega},\Pi_h \widetilde{p})\in \mV_h$, now yields
\begin{align*}
 \sigma \;\|\omega-\omega_h\|^2_{0,\O} + \|p-p_h\|_{0,\O}^2 &=\mathcal{A}(( \omega-\omega_h,p-p_h),(\widetilde{\omega},\widetilde{p}))\\
&=\mathcal{A}((\omega-\omega_h,p-p_h),(\widetilde{\omega}-\theta_h,\widetilde{p}-q_h))\\
&=\mathcal{R}(\widetilde{\omega}-\CR_{h}\widetilde{\omega}, \widetilde{p}-\Pi_h \widetilde{p} ).
\end{align*}
%
Then, we can rewrite the residual as  
\begin{align*}
\mathcal{R}(\widetilde{\omega}-\CR_{h}\widetilde{\omega}, \widetilde{p} -\Pi_h \widetilde{p}) 
&= \sum_{T\in \CT_h} \Big( ( \mathcal{R}_1, \sqrt{\nu}\;(\widetilde{\omega}-\CR_{h}\widetilde{\omega}))_{0,T} 
+ \sum_{e\in\CE(T)} \langle \mathcal{J}_{h,1}\cdot \bt, \sqrt{\nu}\;(\widetilde{\omega}-\CR_{h}\widetilde{\omega}) \rangle_{0,e}\Big)\nonumber\\
&\quad + \sum_{T\in \CT_h} \Big(( \mathcal{R}_{2},  \widetilde{p} -\Pi_h \widetilde{p} )_{0,T} +\sum_{e\in\CE(T)} \langle\mathcal{J}_{h,2}\cdot \bn, ( \widetilde{p} -\Pi_h \widetilde{p}) \rangle_{0,e}\Big),
\end{align*}
and an application of the Cauchy-Schwarz inequality together with the approximation properties \eqref{prop2N}, \eqref{prop2pi} and \eqref{elliptic-regularity}  completes the rest of the proof.
\end{proof}

\subsection{Efficiency} This subsection deals with the efficiency of the  {\it a posteriori} error estimator in the weighted $\mV$-norm depending on $\delta\in (0,1)$ (a result that we call quasi-efficiency), and a bound in the $\L^2$-norm, valid for $\delta=1$. 
\begin{theorem} [Quasi-efficiency]\label{thm:quasi-efficiency}
There is a positive constant $C_{\mathrm{eff}}$, independent of $h$, such that for $\delta \in (0,1]$
$$
C_{\mathrm{eff}} \;\widetilde{\betta} \leq C \;\Vert  h^{\delta}_{\cT}  (e_{\omega}, e_p)\Vert_{\mV} + \mathrm{ h.o.t.},
$$
where  $\mathrm{ h.o.t.}$  denotes higher-order terms and $\Vert  h^{\delta}_{\cT_h}  (e_{\omega}, e_p)\Vert_{\mV} := \Big( \sum_{T\in\cT_h} \Vert  h^{\delta}_{T}  (e_{\omega}, e_p)\Vert^2_{\mV(T)}\Big)^{1/2}.$ 
\end{theorem}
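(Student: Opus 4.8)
The plan is to establish the local lower bound by the standard bubble-function (Verfürth) technique, adapted to the weighted norms. The key structural observation is that the four quantities entering $\widetilde{\betta}_T$ are residuals of expressions that vanish for the exact solution: since $(\omega,p)$ solves the Oseen system classically, one has elementwise $\rot(\sqrt{\nu}\curl\omega+\nu^{-1/2}\omega\times\bbbeta-\ff)-\nu^{-1/2}\sigma\omega=0$ and $\vdiv(\ff-\nu^{-1/2}\omega\times\bbbeta-\nabla p)=0$, while the tangential trace of $\sqrt{\nu}\curl\omega+\nu^{-1/2}\omega\times\bbbeta-\ff$ and the normal trace of $\ff-\nu^{-1/2}\omega\times\bbbeta-\nabla p$ are single-valued across interior edges. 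Consequently every term of $\widetilde\betta_T$ may be rewritten after inserting the exact solution for free, and the residual identity \eqref{reliability-1}, namely $\mathcal{A}((e_\omega,e_p),(\theta,q))=\mathcal{R}(\theta,q)$, together with the integrated-by-parts form of $\mathcal{R}$ derived in the proof of Theorem~\ref{th:reliability}, provides the bridge to the error. As a preliminary step I would replace $\ff$ (and $\bbbeta$, if it is not piecewise polynomial) by a suitable elementwise polynomial projection, so that all residuals become piecewise polynomials modulo data-oscillation contributions collected into $\mathrm{h.o.t.}$

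For the element residuals, fix $T\in\cT_h$ and let $b_T$ be the interior bubble. Testing \eqref{reliability-1} with $(\theta,q)=(b_T\widehat{\mathcal{R}}_1,0)$, where $\widehat{\mathcal{R}}_1$ is the polynomial version of $\mathcal{R}_1$, all edge contributions in $\mathcal{R}$ drop (the bubble vanishes on $\partial T$), leaving $(\mathcal{R}_1,\sqrt{\nu}\,b_T\widehat{\mathcal{R}}_1)_{0,T}$, which by norm equivalence on polynomials dominates a multiple of $\sqrt{\nu}\Vert\widehat{\mathcal{R}}_1\Vert_{0,T}^2$. Bounding the left-hand side $\mathcal{A}((e_\omega,e_p),(b_T\widehat{\mathcal{R}}_1,0))$ by Cauchy--Schwarz on $T$ and using the inverse estimate $\Vert\curl(b_T\widehat{\mathcal{R}}_1)\Vert_{0,T}\lesssim h_T^{-1}\Vert\widehat{\mathcal{R}}_1\Vert_{0,T}$ gives
$$\Vert\widehat{\mathcal{R}}_1\Vert_{0,T}\;\lesssim\;\sigma\Vert e_\omega\Vert_{0,T}+h_T^{-1}\Vert\sqrt{\nu}\curl e_\omega+\nabla e_p\Vert_{0,T}+h_T^{-1}\Vert\bbbeta\Vert_{\infty,\O}\Vert e_\omega\Vert_{0,T}.$$
Multiplying by $h_T^{1+\delta}$, absorbing powers of $h_T\le\mathrm{diam}(\O)$ into the constant, and adding $h_T^{1+\delta}\Vert\mathcal{R}_1-\widehat{\mathcal{R}}_1\Vert_{0,T}$ yields $h_T^{1+\delta}\Vert\mathcal{R}_1\Vert_{0,T}\lesssim\Vert h_T^\delta(e_\omega,e_p)\Vert_{\mV(T)}+\mathrm{h.o.t.}$ The term with $\mathcal{R}_2$ is identical after testing with $(0,b_T\widehat{\mathcal{R}}_2)$ and using $\Vert\nabla(b_T\widehat{\mathcal{R}}_2)\Vert_{0,T}\lesssim h_T^{-1}\Vert\widehat{\mathcal{R}}_2\Vert_{0,T}$.

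For the edge residuals, fix an interior edge $e=T'\cap T''$, put $\Delta_e=T'\cup T''$, and let $b_e$ be the edge bubble supported in $\Delta_e$. Testing \eqref{reliability-1} with $(\theta,q)=(b_e\,E(\widehat{\mathcal{J}}),0)$, where $\widehat{\mathcal{J}}$ is the polynomial version of $[\mathcal{J}_{h,1}\cdot\bt]$ and $E$ a polynomial-preserving extension to $\Delta_e$, the boundary term of $\mathcal{R}$ on $e$ produces a multiple of $\sqrt{\nu}\Vert\widehat{\mathcal{J}}\Vert_{0,e}^2$, while the remaining volumetric terms $\sum_{T\subset\Delta_e}(\mathcal{R}_1,\sqrt{\nu}\theta)_{0,T}$ are already controlled by the element estimate; using the scaled bubble bounds $\Vert b_e E(\widehat{\mathcal{J}})\Vert_{0,T}\lesssim h_e^{1/2}\Vert\widehat{\mathcal{J}}\Vert_{0,e}$ and $\Vert\curl(b_e E(\widehat{\mathcal{J}}))\Vert_{0,T}\lesssim h_e^{-1/2}\Vert\widehat{\mathcal{J}}\Vert_{0,e}$ one obtains $\Vert[\mathcal{J}_{h,1}\cdot\bt]\Vert_{0,e}\lesssim h_e^{-1/2}\Vert(e_\omega,e_p)\Vert_{\mV(\Delta_e)}+h_e^{1/2}\Vert\mathcal{R}_1\Vert_{0,\Delta_e}+\mathrm{osc}$; multiplying by $h_e^{1/2+\delta}$ and invoking shape-regularity ($h_e\sim h_T$) gives the required bound for the tangential jump, and $[\mathcal{J}_{h,2}\cdot\bn]$ is treated analogously with $(0,b_e E(\cdot))$. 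Squaring all bounds, summing over $T$ and $e$, and using the finite overlap of the patches $\Delta_e$ completes the proof.

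The genuinely delicate point is not the bubble machinery but the low regularity of the data: $\ff$ is only elementwise $\H^1$ and $\bbbeta\in\L^\infty(\O)^3$, so the residuals are not polynomials and the norm-equivalence step does not apply verbatim. One circumvents this by projecting the data and hiding the differences in $\mathrm{h.o.t.}$, but it must be checked that these really are higher order — which for $\ff$ follows from the assumed piecewise $\H^1$-regularity, whereas for $\bbbeta$ one should either assume it piecewise smooth (natural in the Oseen context, where it is an approximation of the velocity at a previous time level) or keep $\Vert\bbbeta-\bbbeta_h\Vert$ as an explicit oscillation term. A secondary bookkeeping issue is tracking the $\nu$-, $\sigma$- and $\Vert\bbbeta\Vert_{\infty,\O}$-dependent constants, and the extra $h_T^\delta$ weights, so that after absorbing factors of $h_T\le\mathrm{diam}(\O)$ everything lands precisely in the weighted local norm $\Vert h^\delta_\cT(e_\omega,e_p)\Vert_{\mV}$; that full efficiency is recovered only in this weaker weighted norm is exactly what the qualifier ``quasi'' records.
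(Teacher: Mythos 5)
Your proposal is correct and follows essentially the same route as the paper: the Verf\"urth bubble-function technique applied to the residual identity \eqref{reliability-1}, with interior bubbles and inverse estimates for the element residuals, edge bubbles with a polynomial-preserving extension for the jumps, the $h_T^{-1}$ losses compensated by the $h_T^{1+\delta}$ and $h_e^{1/2+\delta}$ weights, and data oscillation collected into higher-order terms (the paper tests with both residual components simultaneously and uses a $\psi_T$-weighted projection $\cP_T^{\ell}$, but this is only a cosmetic difference). Your remark that the non-polynomial character of $\bbbeta\in\L^{\infty}(\O)^3$ requires either a smoothness assumption or an explicit oscillation term is a point the paper's proof glosses over, and handling it as you propose is the right fix.
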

The second efficiency result is stated as follows.
\begin{theorem} [Efficiency]\label{thm:efficiency}
There is a positive constant $C_{\mathrm{eff}}$, independent of $h$, such that for $\delta=1$
$$
C_{\mathrm{eff}} \;\widetilde{\betta} \leq C \;\Vert  (\sigma^{1/2} e_{\omega}, e_p)\Vert_{0,\Omega} + \mathrm{ h.o.t.}.
$$
\end{theorem}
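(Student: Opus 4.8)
The plan is to establish Theorem~\ref{thm:efficiency} by the standard bubble-function technique, following the same lines as the proof of Theorem~\ref{thm:quasi-efficiency} (the quasi-efficiency result), but now exploiting the $\L^2$-estimate of Lemma~\ref{converl2} with $\delta=1$ to absorb the $h^{\delta}$-weighted $\mV$-norm terms into a genuine $\L^2$-bound. Recall that the indicator $\widetilde{\betta}_T^2$ decomposes into four groups: the elementwise curl-residual $h_T^{2(1+\delta)}\Vert\mathcal{R}_1\Vert_{0,T}^2$, the elementwise divergence-residual $h_T^{2(1+\delta)}\Vert\mathcal{R}_2\Vert_{0,T}^2$, the tangential-jump terms $h_e^{1+2\delta}\Vert[\mathcal{J}_{h,1}\cdot\bt]\Vert_{0,e}^2$, and the normal-jump terms $h_e^{1+2\delta}\Vert[\mathcal{J}_{h,2}\cdot\bn]\Vert_{0,e}^2$. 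I would treat each group in turn.

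For the volumetric residuals I would use the interior (triangle) bubble function $b_T$ supported on $T$, vanishing on $\partial T$. Abbreviating $\mathcal{R}_{1,h}$ (a piecewise polynomial approximation of $\mathcal{R}_1$, with $\mathcal{R}_1-\mathcal{R}_{1,h}$ accounting for the $\mathrm{h.o.t.}$ coming from the non-polynomial data $\ff$), one sets $v_T := b_T\,\mathcal{R}_{1,h}$ on $T$ and zero elsewhere, uses the norm equivalence $\Vert\mathcal{R}_{1,h}\Vert_{0,T}^2 \lesssim (\mathcal{R}_{1,h}, b_T\,\mathcal{R}_{1,h})_{0,T}$, then integrates by parts to rewrite $(\mathcal{R}_1, v_T)_{0,T}$ in terms of the residual functional $\mathcal{R}(\cdot,\cdot)$ acting on $(v_T/\sqrt{\nu},0)$; since $v_T$ vanishes on $\partial T$ the edge contributions disappear. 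Using $\mathcal{R}((\theta,q)) = \mathcal{A}((e_\omega,e_p),(\theta,q))$ from \eqref{reliability-1}, the boundedness \eqref{norma2} of $\mathcal{A}$, and the inverse inequalities $\Vert\nabla v_T\Vert_{0,T}\lesssim h_T^{-1}\Vert v_T\Vert_{0,T}$, $\Vert v_T\Vert_{0,T}\lesssim\Vert\mathcal{R}_{1,h}\Vert_{0,T}$, one gets $h_T^{1+\delta}\Vert\mathcal{R}_{1,h}\Vert_{0,T} \lesssim h_T^{\delta}\Vert(e_\omega,e_p)\Vert_{\mV(\omega_T)} + \text{(scaling factor)}$. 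With $\delta=1$ the factor $h_T^{\delta}$ multiplying $\Vert(e_\omega,e_p)\Vert_{\mV(T)}$ — which contains $\sqrt{\nu}\Vert\curl e_\omega\Vert$ and $\Vert\nabla e_p\Vert$ — is converted, via Lemma~\ref{converl2} summed over elements (or rather its local counterpart), into $C\,\Vert(\sigma^{1/2}e_\omega,e_p)\Vert_{0,\Omega}$; this is the one place where $\delta=1$ is essential. The divergence residual $\mathcal{R}_2$ is handled identically with the bubble paired against $(0,q_T)$.

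For the jump terms I would use edge bubble functions $b_e$ (supported on the patch $\omega_e$ of the two triangles sharing $e$), extend the polynomial jump $[\mathcal{J}_{h,1,h}\cdot\bt]$ constantly into $\omega_e$, set the test function to $b_e$ times this extension, exploit $\Vert[\mathcal{J}_{h,1,h}\cdot\bt]\Vert_{0,e}^2 \lesssim \langle[\mathcal{J}_{h,1,h}\cdot\bt], b_e[\mathcal{J}_{h,1,h}\cdot\bt]\rangle_{0,e}$, and again rewrite the resulting edge integral through $\mathcal{R}(\cdot,\cdot)$; the interior residual contributions that appear along the way are controlled by the already-estimated volumetric terms, and the standard scaling $\Vert b_e\,E(\cdot)\Vert_{0,\omega_e}\lesssim h_e^{1/2}\Vert(\cdot)\Vert_{0,e}$ together with the inverse inequality produces the weight $h_e^{(1+2\delta)/2}$ in front of the jump. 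Summing the four groups over all $T$ and $e$, using finite overlap of the patches, and invoking Lemma~\ref{converl2} with $\delta=1$ to collapse the $h_T\Vert(e_\omega,e_p)\Vert_{\mV}$ contributions into $\Vert(\sigma^{1/2}e_\omega,e_p)\Vert_{0,\Omega}$, yields the claimed bound.

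The main obstacle is the bookkeeping around the $\delta=1$ reduction: one must be careful that every appearance of the full $\mV$-seminorm (containing curl and gradient of the error, which are only $\mathcal{O}(h^s)$, not $\mathcal{O}(h^{s+1})$) is indeed accompanied by a factor $h_T$ and is therefore eligible to be traded — via the local form of Lemma~\ref{converl2} — for an $\L^2$ quantity of the errors; terms such as $\sigma^{1/2}\Vert e_\omega\Vert$ with no extra $h$ must not arise. A secondary, routine issue is the treatment of the non-polynomial datum $\ff$: replacing $\ff$ by an elementwise polynomial projection $\ff_h$ in the residuals introduces data-oscillation terms $h_T\Vert\ff-\ff_h\Vert_{0,T}$, $h_T^{1+\delta}\Vert\rot(\ff-\ff_h)\Vert$, $h_T^{1+\delta}\Vert\vdiv(\ff-\ff_h)\Vert$ and their edge analogues, all of which are collected into $\mathrm{h.o.t.}$ under the stated regularity $\ff\in\H^1(T)^2$.
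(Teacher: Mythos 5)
There is a genuine gap at the heart of your plan: the step where you ``convert'' $h_T^{\delta}\Vert(e_{\omega},e_p)\Vert_{\mV(T)}$ into $C\,\Vert(\sigma^{1/2}e_{\omega},e_p)\Vert_{0,\Omega}$ ``via Lemma~\ref{converl2} summed over elements (or rather its local counterpart)'' uses that lemma in the wrong direction. Lemma~\ref{converl2} is a global Aubin--Nitsche estimate asserting $\Vert e_{\omega}\Vert_{0,\O}+\Vert e_p\Vert_{0,\O}\le C\,h^{\delta}\Vert(e_{\omega},e_p)\Vert_{\mV}$; what you need is the reverse inequality $\Vert h_{\cT}(e_{\omega},e_p)\Vert_{\mV}\lesssim\Vert(e_{\omega},e_p)\Vert_{0,\O}$, which is nowhere established, does not follow from duality, and cannot hold locally (the error is not a discrete function, so no inverse estimate is available for it; the fact that the two quantities happen to decay at the same a priori rate $h^{s+\delta}$ is a statement about rates with constants depending on the exact solution, not a two-sided bound between the actual error quantities). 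With this step removed, your argument only reproduces Theorem~\ref{thm:quasi-efficiency} and never reaches the $\L^2$-bound.

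The mechanism that actually works — and the one the paper uses — is local and does not invoke Lemma~\ref{converl2} at all. After arriving at the identity $\Vert\psi_T^{1/2}\cP_T^{\ell}\mathcal{R}_1\Vert_{0,T}^2+\Vert\psi_T^{1/2}\cP_T^{\ell}\mathcal{R}_2\Vert_{0,T}^2=\mathcal{A}_T((e_{\omega},e_p),\psi_T(\cP_T^{\ell}\mathcal{R}_1,\cP_T^{\ell}\mathcal{R}_2))$, one does \emph{not} bound $\mathcal{A}_T$ by a product of $\mV$-norms; instead one integrates by parts a second time inside $\mathcal{A}_T$ (legitimate because the bubble-weighted test functions vanish on $\partial T$), so that every derivative lands on the polynomial test pair and the error appears only through $\Vert\sigma^{1/2}e_{\omega}\Vert_{0,T}$ and $\Vert e_p\Vert_{0,T}$, multiplied by second derivatives such as $\curl\curl(\psi_T\cP_T^{\ell}\mathcal{R}_1)$ and $\Delta(\psi_T\cP_T^{\ell}\mathcal{R}_2)$. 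Two applications of the inverse inequality then cost $h_T^{-2}$ (rather than the $h_T^{-1}$ of the quasi-efficiency proof), yielding $h_T^{4}(\Vert\mathcal{R}_1\Vert_{0,T}^2+\Vert\mathcal{R}_2\Vert_{0,T}^2)\lesssim\Vert(\sigma^{1/2}e_{\omega},e_p)\Vert_{0,T}^2+\mathrm{osc}$, which is exactly the estimator weight $h_T^{2(1+\delta)}$ at $\delta=1$; this is also why the result fails for $\delta<1$, since then $h_T^{2(1+\delta)-4}\to\infty$. The jump terms are treated the same way on the patch $\O_e$. Incidentally, your remark that terms like $\sigma^{1/2}\Vert e_{\omega}\Vert_{0,T}$ ``with no extra $h$ must not arise'' has the logic backwards for this theorem: such terms are precisely the desired right-hand side, and the whole point of the second integration by parts is to reduce everything to them. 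Your treatment of the data oscillation and the bubble-function machinery is otherwise standard and fine.
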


A major role in the proof of efficiency is played by element and edge bubbles (locally supported non-negative functions), whose definition we recall in what follows. 
For $T\in\cT_{h}(\O)$ and $e\in \cE(T)$, let $\psi_T$ and $\psi_e$, respectively, be the interior and edge bubble functions defined as in, e.g., \cite{ain-ode}. Let $\psi_T\in \cP_3(T)$ with $\supp (\psi_T) \subset T,$  $\psi_T=0$ on $\partial T$ and
$0\leq \psi_T \leq 1$ in $T.$ Moreover,  let $\psi_e |_T \in \cP_2(T)$ with  $\supp (\psi_e) \subset \O_e := \{T' \in \cT_h(\O):
e\in \cE(T')\},$ $\psi_e =0$ on $\partial T\setminus  e,$ and $0\leq \psi_e \leq 1 $ in $\O_e.$  Again, let us recall an extension  operator $E: C^0(e) \mapsto C^0(T)$ that satisfies $E(q)\in \cP_k(T)$ and $E(q)|_e = q$ for all $q\in \cP_k(e)$ and for all 
$k\in \N\cup \{0\}.$

We now summarise  the properties of $\psi_T, \psi_e$ and $E$. For a proof, see \cite{ain-ode}
or \cite{rv-1996}.
\begin{lemma}\label{lem:psi}
The following properties hold:
\begin{itemize}
\item [(i)] For $T\in \cT_h$ and for $v\in \cP_k(T)$, there is a positive constant $C_1$  such that 
\begin{align*}
C_1^{-1}\; \|v\|^2_{0,T} &\leq \int_{T} \psi_T v^2 \dx \leq C_1 \|v\|^2_{0,T},\\
C_1^{-1}\; \|v\|^2_{0,T} &\leq \|\psi v\|^2_{0,T} + h_T^2  |\psi v|^2_{1,T} \leq C_1 \|v\|^2_{0,T}. 
\end{align*}
\item [(ii)] For $e\in \cE_h$ and $v\in \cP_k(e)$, there  exists a positive constant say $C_1$ such that
$$C_1^{-1}\; \|v\|^2_{0,e} \leq \int_{e} \psi_e v^2 \ds \leq C_1 \|v\|^2_{0,e} .$$
\item [(iii)] For $T\in \cT_h$ with $e\in \cE(T)$ and  for all $v\in \cP_k(e)$, there is a positive constant again say $C_1$
such that
$$ \| \psi_e^{1/2}  \;E(v) \|^2_{0,T}  \leq C_1 h_e\; \|v\|^2_{0,e} .$$
\end{itemize}
\end{lemma}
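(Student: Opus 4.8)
The plan is to prove each of the three parts of Lemma~\ref{lem:psi} by standard scaling arguments that transfer everything to a fixed reference element (and reference edge), where all the claimed inequalities reduce to equivalences of norms on finite-dimensional polynomial spaces. The essential observation throughout is that on a fixed reference simplex $\hat T$ (or reference edge $\hat e$), the space $\cP_k$ is finite-dimensional, so any two norms on it are equivalent with constants depending only on $k$ and the reference geometry; the bubble functions $\psi_{\hat T}$ and $\psi_{\hat e}$, being strictly positive on the open reference element (respectively, relatively open edge), furnish genuine norms via the weighted $\L^2$-pairings $v\mapsto (\int \psi v^2)^{1/2}$. The work is then to track how these constants behave under the affine map to a general element $T$ of a shape-regular family, keeping $C_1$ independent of $h$.

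First I would treat part (i). On $\hat T$, the map $v\mapsto \big(\int_{\hat T}\psi_{\hat T}v^2\big)^{1/2}$ is a norm on the finite-dimensional space $\cP_k(\hat T)$ since $\psi_{\hat T}>0$ on the interior and $0\le\psi_{\hat T}\le 1$; hence it is equivalent to $\|\cdot\|_{0,\hat T}$ with constants depending only on $k$. Pulling back via the affine transformation $F_T:\hat T\to T$, the Jacobian factors $|\det DF_T|$ cancel between the two sides of the equivalence, giving the first chain in (i) with an $h$-independent $C_1$. For the second chain, I would additionally invoke the standard inverse inequality $|w|_{1,T}\le C\,h_T^{-1}\|w\|_{0,T}$ valid for polynomials $w$ on shape-regular $T$, applied to $w=\psi_T v\in\cP_{k+3}(T)$; combined with $0\le\psi_T\le 1$ (so $\|\psi_T v\|_{0,T}\le\|v\|_{0,T}$) and the lower bound already obtained, this yields $C_1^{-1}\|v\|_{0,T}^2\le\|\psi_T v\|_{0,T}^2+h_T^2|\psi_T v|_{1,T}^2\le C_1\|v\|_{0,T}^2$.

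Next I would handle part (ii) in exactly the same spirit, working on a reference edge $\hat e$: the pairing $v\mapsto\big(\int_{\hat e}\psi_{\hat e}v^2\big)^{1/2}$ is a norm on $\cP_k(\hat e)$, equivalent to $\|\cdot\|_{0,\hat e}$; transporting to $e$ introduces a single surface-measure scaling factor $h_e^{d-1}$ (here $d=2$, so $h_e$) that again cancels across the inequality, leaving $C_1$ independent of $h$. For part (iii), the construction is the extension operator $E$, which on the reference configuration is a fixed linear map $\cP_k(\hat e)\to\cP_k(\hat T)$; thus $\|\psi_{\hat e}^{1/2}E(v)\|_{0,\hat T}\le C\|v\|_{0,\hat e}$ holds on the reference element by boundedness of $E$ together with $0\le\psi_{\hat e}\le 1$. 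Scaling to $T$ with $e\in\cE(T)$ produces the factor $h_e$ on the right (the difference between a volume measure $\sim h_T^{d}$ and an edge measure $\sim h_e^{d-1}$, with $h_T\sim h_e$ by shape regularity), yielding $\|\psi_e^{1/2}E(v)\|_{0,T}^2\le C_1 h_e\|v\|_{0,e}^2$.

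The main obstacle, such as it is, is bookkeeping rather than conceptual: I must verify that every constant emerging from the reference-element norm equivalences and the inverse inequality depends only on $k$ and on the shape-regularity parameter of the family $\{\cT_h(\O)\}$, and never on $h_T$ or $h_e$ individually. The cleanest way to guarantee this is to invoke shape regularity to bound all the affine-map quantities $\|DF_T\|$, $\|DF_T^{-1}\|$, and $|\det DF_T|$ in terms of $h_T$ uniformly, and to use $h_e\sim h_T$ for edges of $T$. Since this is a classical result, for the detailed verification of these scalings I would simply refer the reader to \cite{ain-ode} or \cite{rv-1996}, as the statement of the lemma already does, and present the reference-element equivalences as the heart of the argument.
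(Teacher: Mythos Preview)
Your proposal is correct and in fact goes further than the paper: the paper does not prove this lemma at all but simply states ``For a proof, see \cite{ain-ode} or \cite{rv-1996}.'' Your sketch via affine scaling to a reference element, norm equivalence on finite-dimensional polynomial spaces, and the inverse inequality is precisely the standard argument found in those references, so there is nothing to compare.
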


\vspace{2em}
\noindent{\it { Proof of Theorem \ref{thm:quasi-efficiency}}}.  
With the help of the $\L^2(T)^2$-orthogonal projection $\cP_{T}^{\ell}$ onto $\cP_{\ell}(T)^2$, for $\ell\geq k,$ with respect to the weighted $\L^2$-inner product $(\psi_T \ff,\bg)$, for 
$\ff,\bg\in \L^2(T)^2,$ it now follows that
\begin{align*}
\|\mathcal{R}_1\|_{0,T}^2 &= \|\nu^{-1/2} \sigma \omega_h+ \rot({\sqrt{\nu}\curl \omega_{h}+\nu^{-1/2} \omega_h \times\;{\boldsymbol\beta}-\ff})\|^2_{0,T}\\
&\leq  2 \Big(\|\rot{\ff}- \cP_{T}^{\ell} (\rot{\ff})\|^2_{0,T} + \|\cP_T^{\ell} (\nu^{-1/2}  \sigma \omega_h + \rot({\sqrt{\nu}\curl \omega_{h}+\nu^{-1/2} \omega_h\;\times {\boldsymbol\beta}-\ff}))\|^2_{0,T}\Big).
\end{align*} 
For the second term on the right-hand side, a use of Lemma~\ref{lem:psi} shows that 
\begin{align*}
& \|\cP_T^{\ell} (\nu^{-1/2}\;\sigma \omega_h + \rot({\sqrt{\nu}\curl \omega_{h}+\nu^{-1/2} \omega_h\;\times {\boldsymbol\beta}-\ff}))\|^2_{0,T}= \Vert \cP_T^{\ell} \mathcal{R}_1\Vert_{0,T}^2 \\
&\qquad \leq \Vert \psi_T^{1/2} \cP_T^{\ell} \mathcal{R}_1\Vert^2_{0,T}
= (\psi_T \cP_T^{\ell}\;\mathcal{R}_1, \mathcal{R}_1)_{0,T}. 
\end{align*} 
In a similar manner, we can derive the bounds
$$\|\mathcal{R}_2\|^2_{0,T} \leq 2 \Big(\|\vdiv \ff  - \cP^{\ell}_{T} (\vdiv \ff)\|_{0,T}^2+ \| \cP^{\ell}_T\mathcal{R}_2\|^2_{0,T}\Big),$$
and 
$$\|\cP^{\ell}_T\mathcal{R}_2\|^2_{0,T} \leq \|\psi_T^{1/2} \cP_T^{\ell}\;\mathcal{R}_2\|^2_{0,T}= (\psi_T  \cP_T^{\ell}\;\mathcal{R}_2, \mathcal{R}_2)_{0,T}.
$$
We proceed to choose $(\theta, q)= \psi_T (\cP_T^{\ell}\;\mathcal{R}_1,\cP_T^{\ell}\;\mathcal{R}_2)$ in 
\eqref{reliability-1}  and  obtain
\begin{align}
\|\psi^{1/2}_T\cP_T^{\ell}\; \mathcal{R}_1\|^2_{0,T} +\| \psi^{1/2}_T \cP_T^{\ell}\;\mathcal{R}_2 \|^2_{0,T} &= ((\mathcal{R}_1,\mathcal{R}_2), 
\psi_T(\cP_T^{\ell}\;\mathcal{R}_1,\cP_T^{\ell}\;\mathcal{R}_2) )_{0,T}  \nonumber \\
& =
\mathcal{A}_{T} ((e_{\omega}, e_ p), \psi_T(\cP_T^{\ell}\;\mathcal{R}_1, \cP_T^{\ell}\;\mathcal{R}_2)).\label{estimator-V-norm}
\end{align}

Next, we invoke estimate (i) of Lemma~\ref{lem:psi}. This  yields
\begin{align*}
\|\psi^{1/2}_T\cP_T^{\ell}\; \mathcal{R}_1\|^2_{0,T} +\| \psi^{1/2}_T \cP_T^{\ell}\;\mathcal{R}_2 \|^2_{0,T} 
&\leq C\, \|(e_{\omega}, e_ p)\|_{\mV(T)}\; \|\psi_T \cP_T^{\ell}\;(\mathcal{R}_1, \mathcal{R}_2)\|_{\mV(T)}\nonumber\\
&\leq C\,h_T^{-1}  \|(e_{\omega}, e_ p)\|_{\mV(T)}\;\Big( \|\mathcal{R}_1\|^2_{0,T} +\| \mathcal{R}_2)\|^2_{0,T}\Big)^{1/2}.
\end{align*}
Altogether, we now arrive at 
\begin{align}\label{estimate:R1-R2}
h_T^{2(1+\delta)}\;\Big( \| \mathcal{R}_1\|^2_{0,T} +\| \mathcal{R}_2 \|^2_{0,T} \Big)
&\leq  C\,\Big( \|h_T^{\delta} (e_{\omega}, e_ p)\|_{\mV(T)}^2 + h_T^{2(1+\delta)}  (  \|\rot \ff- \cP^{\ell}_T (\rot \ff)\|^2_{0,T} \nonumber\\
& \qquad \qquad \qquad \qquad \qquad \qquad +  \|\vdiv \ff- \cP^{\ell}_T (\vdiv \ff)\|^2_{0,T}) \Big).
\end{align}
Regarding the estimates associated with $\mathcal{J}_{h,1}$ and $\mathcal{J}_{h,2}$, we introduce, respectively,   $\widetilde{\cP}_T^{\ell}$  and  $\widetilde{\cP}_e^{\ell}$ as  the weighted $\L^2$-orthogonal projections (say, with respect to the weighted inner product $(\psi_e f,g)_e$), onto $\cP_{\ell}(T)^2$ and $\cP_{\ell}(e)$, for $\ell\geq k.$  Then, we can  bound $\mathcal{J}_{h,1}$  and  $\mathcal{J}_{h,2}$  as
\begin{align}\label{estimate:J1-J2}
h_{e}^{(1+2\delta)}\Big(\| [\mathcal{J}_{h,1}\cdot \bt] \|_{0,e}^2 + \| [\mathcal{J}_{h,2}\cdot \bn] \|_{0,e}^2\Big) & \leq h_{e}^{(1+2\delta)}
\Big(  \| [( \ff- \widetilde{\cP}_T^{\ell} \ff)\cdot\bt] \|_{0,e}^2 +  \| [( \ff- \widetilde{\cP}_T^{\ell} \ff)\cdot\bn] \|_{0,e}^2\Big)\nonumber\\
&\quad + h_{e}^{(1+2\delta)}  \Big( \| [\widetilde{\cP}_e^{\ell} (\mathcal{J}_{h,1})\cdot\bt]\|_{0,e}^2 + \| [\widetilde{\cP}_e^{\ell}(\mathcal{J}_{h,2})\cdot \bn]\|_{0,e}^2\Big).
\end{align}
In order to estimate the first term on the right-hand side of \eqref{estimate:J1-J2} we use the trace inequality, yielding 
\begin{align}\label{estimate:ff}
h_{e}^{(1+2\delta)} \Big(\| [ (\ff- \widetilde{\cP}_e^{\ell} \ff)\cdot \bt] \|_{0,e}^2 &+ \| [ (\ff- \widetilde{\cP}_e^{\ell} \ff)\cdot \bn] \|_{0,e}^2\Big) \nonumber\\
& \leq C\; h_{e}^{(1+2\delta)} \sum_{T\in\Omega_{e}} \Big( h_{e}^{-1} \| \ff- \widetilde{\cP}_e^{\ell} \ff \|_{0,T}^2 + h_e  \| \nabla(\ff-\widetilde{\cP}_e^{\ell}\ff) \|_{0,T}^2\Big)\nonumber\\
&\leq C\;\sum_{T\in\Omega_{e}} \Big( h_{T}^{2\delta} \| \ff- \widetilde{\cP}_e^{\ell} \ff \|_{0,T}^2 + h_{T}^{2(1+\delta)}  \| \nabla(\ff-\widetilde{\cP}_e^{\ell}\ff) \|_{0,T}^2\Big).
\end{align}
Again from \eqref{reliability-1} we note that with $(\btheta, q)= \psi_e E ([\widetilde{\cP}_e^{\ell}\mathcal{J}_{h,1}\cdot\bt], [\widetilde{\cP}_e^{\ell}\mathcal{J}_{h,2}\cdot\bn] )_e$ we obtain 
\begin{align*}
\mathcal{A}_{T} ((e_{\omega}, e_ p), \psi_e E ([\widetilde{\cP}_e^{\ell}\mathcal{J}_{h,1}\cdot\bt],  [\widetilde{\cP}_e^{\ell}\mathcal{J}_{h,2}\cdot\bn] )_{0,e}) & = ((\mathcal{R}_1,\mathcal{R}_2),
\psi_e E ([\widetilde{\cP}_e^{\ell}\mathcal{J}_{h,1}\cdot\bt], [\widetilde{\cP}_e^{\ell}\mathcal{J}_{h,2}\cdot\bn] ))_{0,\O_e} \nonumber\\
&\quad + ( ([\mathcal{J}_{h,1}\cdot\bt], [\mathcal{J}_{h,2}\cdot\bn] ), \psi_e E ([\widetilde{\cP}_e^{\ell}\mathcal{J}_{h,1}\cdot\bt], [\widetilde{\cP}_e^{\ell}\mathcal{J}_{h,2}\cdot\bn] ))_{0,e}.
\end{align*} 
Now we appeal again to Lemma~\ref{lem:psi} to readily find that
\begin{equation*}
( ([\mathcal{J}_{h,1}\cdot\bt], [\mathcal{J}_{h,2}\cdot\bn] ), \psi_e E ([\widetilde{\cP}_e^{\ell}\mathcal{J}_{h,1}\cdot\bt], [\widetilde{\cP}_e^{\ell}\mathcal{J}_{h,2}\cdot\bn] ))_{0,e} \geq C_1\;\Big( \| [\widetilde{\cP}_e^{\ell}\mathcal{J}_{h,1}\cdot\bt] \|_{0,e}^2 + \| [\widetilde{\cP}_e^{\ell}\mathcal{J}_{h,2}\cdot\bn] \|_{0,e}^2\Big),
\end{equation*}
and, thus, we arrive at 
\begin{align*}
\Big(\| [\widetilde{\cP}_e^{\ell}\mathcal{J}_{h,1}\cdot\bt] \|_{0,e}^2 + \| [\widetilde{\cP}_e^{\ell}\mathcal{J}_{h,2}\cdot\bn] \|_{0,e}^2\Big)
&\leq  C_1^{-1} \;\Big( |\mathcal{A}_{T} ((e_{\omega}, e_ p), \psi_e E ([\widetilde{\cP}_e^{\ell}\mathcal{J}_{h,1}\cdot\bt], [\widetilde{\cP}_e^{\ell}\mathcal{J}_{h,2}\cdot\bn] )_{0,e}) | \nonumber\\
&\qquad \qquad + | ((\mathcal{R}_1,\mathcal{R}_2), \psi_e E ([\widetilde{\cP}_e^{\ell}\mathcal{J}_{h,1}\cdot\bt], [\widetilde{\cP}_e^{\ell}\mathcal{J}_{h,2}\cdot\bn] ))_{0,\O_e} |\Big).
\end{align*}
Therefore, employing properties (i) and (ii) from Lemma~\ref{lem:psi}, it follows that
\begin{align*}
\Big(\| [\widetilde{\cP}_e^{\ell}\mathcal{J}_{h,1}\cdot\bt] \|_{0,e}^2 &+ \| [\widetilde{\cP}_e^{\ell}\mathcal{J}_{h,2}\cdot\bn] \|_{0,e}^2\Big)
\leq C\, \Big( \|(e_{\omega}, e_ p)\|_{\mV(\O_e)} \: \| \psi_e^{1/2} E ([\widetilde{\cP}_e^{\ell}\mathcal{J}_{h,1}\cdot\bt], [\widetilde{\cP}_e^{\ell}\mathcal{J}_{h,2}\cdot\bn] )\|_{\mV(\O_e)} \\
&\qquad \qquad \qquad \qquad \qquad \qquad + \|(\mathcal{R}_1,\mathcal{R}_2)\|_{0,\O_e}\;\| \psi_e^{1/2} E ([\widetilde{\cP}_e^{\ell}\mathcal{J}_{h,1}\cdot\bt], [\widetilde{\cP}_e^{\ell}\mathcal{J}_{h,2}\cdot\bn] )\|_{0,\O_e}\Big)\\
&\leq C\,\Big( h_T^{-1} h^{1/2}_e \|(e_{\omega}, e_ p)\|_{\mV(\O_e)}  + h^{1/2}_e \:\|(\mathcal{R}_1,\mathcal{R}_2)\|_{0,\O_e}\Big)\; 
\Big(\| [\widetilde{\cP}_e^{\ell}\mathcal{J}_{h,1}\cdot\bt] \|_{0,e}^2 + \| [\widetilde{\cP}_e^{\ell}\mathcal{J}_{h,2}\cdot\bn] \|_{0,e}^2 \Big)^{1/2}.
\end{align*}
Now with $h_e \leq h_T$, we simply apply  \eqref{estimate:R1-R2} and obtain
\begin{align}\label{estimate:J1-J2-2}
& h_e^{\frac{1}{2}+\delta}\;\Big(\| [\widetilde{\cP}_e^{\ell}\mathcal{J}_{h,1}\cdot\bt] \|_{0,e}^2 + \| [\widetilde{\cP}_e^{\ell}\mathcal{J}_{h,2}\cdot\bn] \|_{0,e}^2\Big)^{1/2}
\leq C\Big( h_e^{\delta} \|(e_{\omega}, e_ p)\|_{\mV(\O_e)}  + h_e^{1+\delta} \:\|(\mathcal{R}_1,\mathcal{R}_2)\|_{0,\O_e}\Big)\nonumber\\
 &\qquad \leq C\Big( h_T^{2\delta} \|(e_{\omega}, e_ p)\|^2_{\mV(\O_e)}  +  h_T^{2(1+\delta)}(  \|\rot \ff- \cP^{\ell}_T (\rot \ff)\|^2_{0,T}  + \|\vdiv \ff- \cP^{\ell}_T (\vdiv \ff)\|^2_{0,T}) \Big)^{1/2}.
\end{align}
Finally, we substitute \eqref{estimate:ff} and \eqref{estimate:J1-J2-2} in \eqref{estimate:J1-J2}, and then combine the result with  \eqref{estimate:R1-R2} to complete the rest of the proof.
\hfill {$\Box$}

\vspace{2em}
\noindent{\it { Proof of Theorem \ref{thm:efficiency}}}.  
We follow the same steps taken in the proof of  Theorem~\ref{thm:quasi-efficiency} until arriving to relation \eqref{estimator-V-norm}. Then, applying integration by parts and exploiting the properties of $\psi_T$ 
we can show
\begin{align}\label{estimate:L2-norm}
\|\psi^{1/2}_T\cP_T^{\ell}\; \mathcal{R}_1\|^2_{0,T} &+\| \psi^{1/2}_T \cP_T^{\ell}\;\mathcal{R}_2 \|^2_{0,T} = 
 (\sigma^{1/2} e_{\omega},  \sigma^{1/2} \psi_T(\cP_T^{\ell}\;\mathcal{R}_1))_{0,T}\nonumber\\
 &+ ( \sigma^{1/2} e_{\omega} \times \bbbeta, \sigma^{-1/2} \nu^{-1/2} ( \curl ( \psi_T \cP_T^{\ell}\;\mathcal{R}_1) + \nabla(\psi_T \cP_T^{\ell}\;\mathcal{R}_2)) \;)_{0,T}\nonumber\\
 &-(\sigma^{1/2} e_{\omega}, \sigma^{-1/2} (\nu \curl (\curl(\psi_T \cP_T^{\ell}\;\mathcal{R}_1)) )_{0,T} - (e_p, \Delta(\psi_T\cP_T^{\ell}\;\mathcal{R}_2) )_{0,T}.
\end{align}
An application of estimate (i) of Lemma~\ref{lem:psi} together with inverse inequality implies that 
\begin{align*}
\|\psi^{1/2}_T\cP_T^{\ell}\; \mathcal{R}_1\|^2_{0,T} +\| \psi^{1/2}_T \cP_T^{\ell}\;\mathcal{R}_2 \|^2_{0,T} 
&\leq C\,h_T^{-2} \Big( \Vert \sigma^{1/2} e_{\omega}\Vert_{0,T} + \Vert e_ p \Vert_{0,T}\Big)\; \Big(\Vert \psi_T \cP_T^{\ell}\;\mathcal{R}_1\Vert_{0,T} + \Vert \psi_T \cP_T^{\ell}\; \mathcal{R}_2 \Vert_{0,T}\Big)\nonumber\\
&\leq C\,h_T^{-2}  \|(\sigma^{1/2} e_{\omega}, e_ p)\|_{0,T}\;\Big( \|\mathcal{R}_1\|^2_{0,T} +\| \mathcal{R}_2\|^2_{0,T}\Big)^{1/2}.
\end{align*}
Altogether, we now obtain
\begin{align}\label{estimate:R1-R2-1}
h_T^{4}\;\Big( \| \mathcal{R}_1\|^2_{0,T} +\| \mathcal{R}_2 \|^2_{0,T} \Big)
&\leq  C\,\Big( \| (\sigma^{1/2} e_{\omega}, e_ p)\|_{0,T}^2 + h_T^{4}  (  \|\rot \ff- \cP^{\ell}_T (\rot \ff)\|^2_{0,T} \nonumber\\
& \qquad \qquad \qquad \qquad \qquad \qquad +  \|\vdiv \ff- \cP^{\ell}_T (\vdiv \ff)\|^2_{0,T}) \Big).
\end{align}
For the estimates of $\mathcal{J}_{h,1}$ and $\mathcal{J}_{h,2}$,  we again proceed as in the proof of Theorem ~\ref{thm:quasi-efficiency}  to arrive at \eqref{estimate:J1-J2}.
Then, an integration by parts applied to the first term on the right-hand side of \eqref{estimate:J1-J2} as in \eqref{estimate:L2-norm}, with estimates (i) and (ii) from  Lemma~\ref{lem:psi}, in combination with 
inverse inequality, and obvious cancellation, permit us to write 
\begin{align*}
\Big(\| [\widetilde{\cP}_e^{\ell}\mathcal{J}_{h,1}\cdot\bt] \|_{0,e}^2 &+ \| [\widetilde{\cP}_e^{\ell}\mathcal{J}_{h,2}\cdot\bn] \|_{0,e}^2\Big)^{1/2}
\leq C\,\Big( h_T^{-2} \|(\sigma^{1/2} e_{\omega}, e_ p)\|_{0,\O_e}  + h^{1/2}_{T} \:\|(\mathcal{R}_1,\mathcal{R}_2)\|_{0,\O_e}\Big).
\end{align*}
Since $h_e \leq h_T$, we simply apply  \eqref{estimate:R1-R2-1}  to   obtain, after squaring, the bound
\begin{align}\label{estimate:J1-J2-3}
& h_e^{3}\;\Big(\| [\widetilde{\cP}_e^{\ell}\mathcal{J}_{h,1}\cdot\bt] \|_{0,e}^2 + \| [\widetilde{\cP}_e^{\ell}\mathcal{J}_{h,2}\cdot\bn] \|_{0,e}^2\Big)
\leq C\Big(  \|(\sigma^{1/2} e_{\omega}, e_ p)\|^2_{0,\O_e}  + h_{T}^{4} \:\|(\mathcal{R}_1,\mathcal{R}_2)\|^2_{0,\O_e}\Big)\nonumber\\
&\qquad \leq C\Big(  \|(\sigma^{1/2} e_{\omega}, e_ p)\|^2_{0,\O_e}  +  h_T^{4}(  \|\rot \ff- \cP^{\ell}_T (\rot \ff)\|^2_{0,T}   + \|\vdiv \ff- \cP^{\ell}_T (\vdiv \ff)\|^2_{0,T}) \Big).
\end{align}
On substitution of \eqref{estimate:J1-J2-3} and \eqref{estimate:ff}  in \eqref{estimate:J1-J2} for $\delta=1$, it suffices to combine the resulting estimate with 
\eqref{estimate:R1-R2-1} to conclude the rest of the proof. \hfill {$\Box$}

\begin{remark}
Note that the {\it a posteriori} lower bound derived in Theorem~\ref{thm:efficiency} is valid  only upon the assumption of $\H^2$-regularity, that is,  for $\delta=1$. When $\delta\in (0,1)$, obtaining an efficiency result for the {\it a posteriori} error indicator in the $\L^2$-norm is much more involved, essentially due to the presence of corner singularities.  For instance, a 
reliable and efficient estimators using weighted $\L^2$-norms is available for the Poisson equation in \cite{tpw07}. A similar analysis could eventually be carried out in the present case, provided an additional regularity is established using weighted Sobolev spaces and appropriate interpolation results. However here we restrict ourselves only to verifying these properties numerically in the next Section.  

In addition, the result of Theorem~\ref{thm:quasi-efficiency} does indicate that the estimator is quasi-efficient, as the error in the $\L^2$-norm, $\Vert (\sigma^{1/2} e_{\omega},e_p)\Vert_{0,\O}$, is proportional to
$C\; \Vert(e_{\omega},e_p)\Vert_{0,\O}.$
\end{remark}

\section{Numerical tests}\label{sec:numer}

\begin{table}[t]
\setlength{\tabcolsep}{2pt}
\begin{center}
{\small\begin{tabular}{|c|c|c|c|c|c|c|c|c|c|c|}
  \hline
  $h$& $\Vert\bomega-\bomega_h\!\Vert_{0,\O}$ & \texttt{rate}  & $\Vert p-p_h\!\Vert_{0,\O}$
  & \texttt{rate} &  $\Vert\bu-\bu_h\!\Vert_{0,\O}$ & \texttt{rate} & $\Vert\bu-\tilde{\bu}_h\!\Vert_{0,\O}$ & \texttt{rate} &
  {$\Vert(\bomega,p)\!-\!(\bomega_h,p_h\!)\Vert_{\mV}$} & {\texttt{rate}}\\ \hline \hline
\multicolumn{11}{|c|}{$k = 1$} \\
\hline
1.414 & 5.1820 & --    & 5.661091 & --    & 2.812797 & --    & 2.8105 & --    & 12.9222 & --\\ \hline
0.745 & 1.4824 & 1.954 & 0.601930 & 3.499 & 1.564395 & 0.916 & 2.3300 & 0.420 & 7.54263 & 0.840\\ \hline
0.380 & 0.5602 & 1.445 & 0.222225 & 1.480 & 0.871818 & 0.868 & 0.5504 & 2.143 & 4.99371 & 0.612\\ \hline
0.190 & 0.1222 & 2.196 & 0.047772 & 2.217 & 0.428659 & 1.024 & 0.1257 & 2.129 & 2.26335 & 1.141\\ \hline
0.096 & 0.0278 & 2.175 & 0.008442 & 2.548 & 0.212433 & 1.032 & 0.0321 & 2.005 & 1.10120 & 1.059\\ \hline
0.051 & 0.0074 & 2.082 & 0.002089 & 2.197 & 0.106335 & 1.088 & 0.0080 & 2.186 & 0.55034 & 1.091\\ \hline
0.028 & 0.0018 & 2.297 & 0.000489 & 2.377 & 0.053041 & 1.138 & 0.0019 & 2.282 & 0.27367 & 1.143\\ \hline
0.014 & 0.0004 & 2.184 & 0.000123 & 2.208 & 0.026735 & 1.097 & 0.0005 & 2.193 & 0.13906 & 1.084  \\ \hline
\multicolumn{11}{|c|}{$k = 2$} \\
\hline
1.414 & 1.603335 & --    & 2.180130 & --    & 3.4023 & --    & 2.230100 & --    & 9.7142 & --\\ \hline
0.745 & 0.491516 & 1.846 & 0.195556 & 3.764 & 2.3595 & 1.028 & 0.316770 & 3.047 & 4.6790 & 1.140\\ \hline
0.380 & 0.057665 & 3.182 & 0.016245 & 3.695 & 0.4888 & 2.338 & 0.043145 & 2.961 & 0.8417 & 2.547\\ \hline
0.190 & 0.008520 & 2.758 & 0.001088 & 3.899 & 0.1180 & 2.050 & 0.005448 & 2.985 & 0.1939 & 2.117\\ \hline
0.096 & 0.001220 & 2.857 & 0.000042 & 4.768 & 0.0316 & 1.934 & 0.000629 & 3.175 & 0.0520 & 1.933\\ \hline
0.051 & 0.000155 & 3.241 & 0.000005 & 3.222 & 0.0078 & 2.200 & 0.000081 & 3.233 & 0.0127 & 2.218\\ \hline
0.028 & 0.000020 & 3.380 & 0.000001 & 3.465 & 0.0019 & 2.280 & 0.000010 & 3.401 & 0.0031 & 2.268\\ \hline
0.014 & 0.000002 & 3.375 & 1.34e-07& 3.481 & 0.0004 & 2.203 & 0.000001 & 3.383 & 0.0008 & 2.203  \\ \hline
\end{tabular}}
\end{center}
\caption{Example~1. Convergence tests against analytical solutions 
on a sequence of uniformly refined triangulations of the domain 
$\O=(-1,1)^2$. Approximations with $k = 1,2$ and velocity postprocessing using \eqref{repudisc} 
and \eqref{eq:vel3}.}  \label{table:ex1a}
\end{table}

In this section, we report  the results of some numerical tests carried out
with the finite element method proposed in Section~\ref{sec:FE}. The solution 
of all linear systems is carried out with the multifrontal massively parallel sparse 
direct solver MUMPS. 

The discrete formulation is extended to the case of mixed boundary 
conditions, assuming that the domain boundary is disjointly split 
into two parts $\G_1$ and $\G_2$ such that \eqref{eq:bc} is replaced by
\begin{align}
  \bu  = &\,\bg &    \mbox{ on } \G_1,\nonumber\\
  \bu \times\bn = &\,\ba \times \bn&    \mbox{ on } \G_2,\label{eq:bc2}\\
  p  = &\, p_0 & \mbox{ on } \G_2, \nonumber
 \end{align}
(see similar treatments in \cite{Bertoluzza,bernd}) and the condition of zero average is imposed on the Bernoulli pressure, using a real Lagrange multiplier approach, only if $\G_2=\emptyset$. Using \eqref{eq:bc2}, the linear functional $\mathcal{F}_h:\mV_h\to\R$ defining the finite element scheme 
adopts the specification
$$
\mathcal{F}(\btheta_h,q_h)=\int_{\O}\ff\cdot(\sqrt{\nu}\curl\btheta_h+\nabla q_h)+\sigma \sqrt{\nu} \langle \bg \times \bn, \btheta_h\rangle_{\Gamma_1}
- \sigma \langle \bg \cdot \bn , q_h \rangle_{\Gamma_1} 
+\sigma \sqrt{\nu} \langle \ba \times \bn, \btheta_h\rangle_{\Gamma_2}.
$$

\begin{table}[t]
\setlength{\tabcolsep}{2pt}
\begin{center}
{\small\begin{tabular}{|c|c|c|c|c|c|c|c|c|c|c|c|}
  \hline
DoF &   $\Vert\bu-\bu_h\Vert_{0,\O}$ & \texttt{rate} & $\Vert\bu-\tilde{\bu}_h\Vert_{0,\O}$ & \texttt{rate} &  $\Vert(\sqrt{\sigma} e_{\bomega},e_p) \Vert_{0,\O}$ & \texttt{rate} & $\Vert h^{\delta}_{\cT_h}(e_{\bomega},e_p)\Vert_{\mV}$ & \texttt{rate} & \texttt{eff}$_1(\tilde\betta)$  & \texttt{eff}$_2(\tilde\betta)$  \\ \hline \hline
\multicolumn{11}{|c|}{$\delta = 1/10$} \\
\hline
27 & 7.353-02 &   -- & 0.00371 &  -- & 0.08482 &  -- &  0.89616 &  -- & 0.0281 & 0.2964\\
83 & 3.02e-02 & 1.281 & 0.00096 & 1.947 & 0.02324 &  1.872 &  0.43461 &  1.044 & 0.0142 & 0.2668\\
291 & 1.14e-02 & 1.401 & 0.00023 & 2.056 & 0.00591 &  1.970 &  0.20478 &  1.086 & 0.00722 & 0.2498\\
1091 & 4.18e-03 & 1.453 & 5.66e-05 & 2.032 & 0.00149 &  1.993 &  0.09575 &  1.097 & 0.00374 & 0.2411\\
4227 & 1.50e-03 & 1.477 & 1.42e-05 & 2.015 & 0.00037 &  1.998 &  0.04467 &  1.099 & 0.00197 & 0.2367\\
16643 & 5.35e-04 & 1.493 & 3.48e-06 & 2.007 & 9.32e-05 &     2.001 &  0.02084 &   1.100 & 0.00105 & 0.2345\\
66051 & 1.90e-04 & 1.494 & 8.69e-07 & 2.003 & 2.33e-05 &     2.000 &  0.00972 &   1.100 & 0.000558 & 0.2334\\
263171 & 6.73e-05 & 1.500 & 2.17e-07 & 2.002 & 5.81e-06 &     2.000 &  0.00453 &   1.100 & 0.000298 & 0.2328\\
\hline
 \multicolumn{11}{|c|}{$\delta = 1/2$} \\
 \hline
 27 &   7.35e-02 &   -- & 0.00371 &   -- &  0.08482 &   -- &  0.67910 &  -- & 0.0372 & 0.2398 \\
83 & 3.02e-02 & 1.281 & 0.00096 & 1.958 &  0.02324 &  1.872 &  0.25282 & 1.449 & 0.0249 & 0.2382\\
291 &  1.14e-02 & 1.401 & 0.00023 & 2.056 &  0.00591 &  1.970 &  0.08912 & 1.492 & 0.0167 & 0.2395\\
1091 &  4.18e-03 & 1.453 & 5.66e-05 & 2.031 &  0.00149 &  1.993 &  0.03176 & 1.500 & 0.0114 & 0.2395\\
4227 &  1.50e-03 & 1.477 & 1.42e-05 & 2.025 &  0.00037 &  1.998 &  0.01152 & 1.500 & 0.0079 & 0.2395\\
16643 & 5.35e-04 & 1.493 & 3.48e-06 & 2.013 &  9.32e-05 &     2.001 &  0.00395 & 1.500 & 0.0055 & 0.2395\\
66051 &  1.90e-04 & 1.494 & 8.69e-07 &   2.000 &  2.33e-05 &     2.000 &  0.00147 & 1.500 & 0.0039 & 0.2395\\
263171 &  6.73e-05 & 1.500 & 2.17e-07 &   2.000 &  5.81e-06 &     2.000 &  0.00049 & 1.500 & 0.0027 & 0.2395\\
\hline
 \multicolumn{11}{|c|}{$\delta = 1$} \\
 \hline
 27 & 7.35e-02 & -- & 0.00371 &   -- & 0.0848 &    -- &  0.48022 &    -- & 0.0452 & 0.2390 \\
83 & 3.02e-02 & 1.281 & 0.00096 & 1.958 & 0.0232 &  1.872 &  0.12480 &  1.942 & 0.0450 & 0.2397\\
291 & 1.14e-02 & 1.401 & 0.00023 & 2.056 & 0.00591 &  1.970 &  0.03152 &  1.991 & 0.0448 & 0.2395\\
1091 & 4.18e-03 & 1.453 & 5.66e-05 & 2.031 & 0.00149 &  1.993 &  0.00789 &    2.000 & 0.0448 & 0.2394\\
4227 & 1.50e-03 & 1.477 & 1.42e-05 & 2.025 & 0.00037 &    1.998 &  0.00197 &    2.000 & 0.0449 & 0.2395\\
16643 & 5.35e-04 & 1.493 & 3.48e-06 & 2.013 & 9.3e-05 &    2.001 &  0.00049 &    2.000 & 0.0448 & 0.2395\\
66051 & 1.90e-04 & 1.494 & 8.69e-07 &   2.000 & 2.33e-05 &    2.000 &  0.00012 &    2.000 & 0.0448 & 0.2395\\
263171 & 6.73e-05 & 1.500 & 2.17e-07 &   2.000 & 5.81e-06 &    2.000 &  3.08e-05 &    2.000 & 0.0448 & 0.2395\\
\hline
\end{tabular}}
\end{center}
\caption{Example~2A. Error history and effectivity indexes \eqref{eq:indexes} associated with the {\it a posteriori} error estimator \eqref{globalestimator2}. Smooth solutions on the unit square. 
Approximation with $k = 1$, and velocity postprocessing using \eqref{repudisc} 
and \eqref{eq:vel3}.}  \label{table:aposte}
\end{table}

\medskip\noindent\textbf{Example 1.} 
First, we construct a manufactured solution in the two-dimensional domain 
$\Omega=(-1,1)^2$ and assess the convergence properties and verify the 
rates anticipated in Lemma~\ref{converl2}, and Theorems~\ref{conver} and \ref{th:cvu}. 
We compute individual errors  and convergence rates as usual
for all fields on su\-cce\-ssi\-vely refined
partitions of $\O$. For this test we assume that $\G_1$ is composed by the 
horizontal edges and the right edge, whereas $\G_2$ is the rest of the boundary.
We propose the following closed-form and smooth solutions
\begin{gather*}
\bomega(x,y):=-\sqrt{\nu}\left(e^{x-1}\sin(\pi y)^2
+2\pi^2(x-e^{x-1})(\sin(\pi y)^2-\cos(\pi y)^2)\right),\quad 
p(x,y):=x^4-y^4, \\
\bu(x,y):=\left(
_{}\begin{array}{l}
(e^{x-1}-x)(2\pi\sin(\pi y)\cos(\pi y))\\
-(e^{x-1}-1)(\sin(\pi y)^2) \\
\end{array}
\right),
\end{gather*}
satisfying $\bu=\cero$ on $\Gamma_1$.
\noindent
In addition, we consider 
\[ \bbbeta(x,y):=\left(
_{}\begin{array}{l}
\frac{1}{6}(e^{x-1}-x)(\pi\sin(2\pi y)) \\
-(e^{x-1}-1)(\sin(\pi y)^2) \\
\end{array}
\right),
\]
together with the model parameters $\sigma=100$ and $\nu=0.1$, which in turn  
fulfil \eqref{bound1}. These exact solutions lead to a nonzero right-hand side 
that we use to verify the accuracy of the finite element approximation.

We report in Table \ref{table:ex1a} the error
history of the method in the $\L^2$- and $\mV$-norms, where we also show the 
convergence of the post-processed velocity using the direct computation 
\eqref{repudisc} producing $\bu_h\in \bU_h$, and the alternative post-processing 
through solving the auxiliary problem \eqref{eq:vel3}, giving $\tilde{\bu}_h\in \widetilde{\bU}_h$. 
It can be clearly seen that optimal order of convergence
is reached for all fields in both polynomial degrees $k=1$ and $k=2$, 
which confirms the sharpness of the theoretical error bounds.

\medskip\noindent\textbf{Example 2.} 
Secondly, we test the properties of the {\it a posteriori} error estimator 
\eqref{globalestimator2}, including the reliability, efficiency, as well as quasi-efficiency of the estimator. 
In a first instance (Example 2A) we simply compute locally the estimator and check, using smooth exact solutions 
in a convex domain $\Omega = (0,1)^2$, how it relates to the true error, by refining uniformly the mesh. 
Defining the smooth function $\varphi(x,y):= x^2(1-x)^2y^2(1-y)^2$, 
 the closed-form solutions are 
\begin{gather*}
\bu(x,y):= \curl \varphi, \quad p(x,y):= x^4-y^4, \quad   \bomega(x,y):= \sqrt{\nu} \curl\bu,\end{gather*}
and we take $\nu = 10^{-3}$, $\sigma = 10$, and  $\bbbeta(x,y):=  \curl \varphi$. 
Only Dirichlet velocity conditions are considered in this example (that is, $\Gamma_2$ is empty), which 
amounts to add a real Lagrange multiplier imposing the condition of zero-average for the Bernoulli pressure.  In Table~\ref{table:aposte} we collect 
the error history of the method, including individual errors and convergence rates as well as the errors 
analysed in Theorems~\ref{th:reliability}, \ref{thm:quasi-efficiency}, \ref{thm:efficiency}. As the estimator and the quasi-efficiency depend on the values of $\delta$, we explore three cases $\delta \in\{1/10,1/2,1\}$. The robustness is assessed by computing the effectivity 
indexes as the ratios 
\begin{equation}\label{eq:indexes}
\texttt{eff}_1 : = \frac{\Vert(\sigma^{1/2}e_{\bomega},e_p) \Vert_{0,\O}}{\tilde\betta}, \qquad \texttt{eff}_2 : = \frac{\Vert h^{\delta}_{\cT_h}(e_{\bomega},e_p)\Vert_{\mV}}{\tilde\betta}.
\end{equation}
The results confirm that the estimator is robust with respect to the weighted $\mV$-norm for all values of $\delta$, but the second-last column of the table indicates that $\tilde\betta$ is not necessarily efficient in the $\L^2$-norm, for $\delta <1$. 

\begin{table}[!t]
\setlength{\tabcolsep}{2pt}
\begin{center}
{\footnotesize\begin{tabular}{|c|c|c|c|c|c|c|c|c|c|c|c|c|}
  \hline
DoF & $\Vert e_{\bomega}\Vert_{0,\O}$  & \texttt{rate}  & 
$\Vert e_p\Vert_{0,\O}$  & \texttt{rate}  & 
$\Vert\bu-\tilde{\bu}_h\Vert_{0,\O}$ & \texttt{rate}  &  $\Vert(\sqrt{\sigma}e_{\bomega},e_p) \Vert_{0,\O}$ & \texttt{rate} &  $\Vert h^{\delta}_{\cT_h}\!\!(e_{\bomega},e_p)\Vert_{\mV}$ & \texttt{rate} & \texttt{eff}$_1(\tilde\betta)$  & \texttt{eff}$_2(\tilde\betta)$  \\ \hline \hline
23  & 8.87e-05 & -- & 1.19e-04 & -- & 2.36e-04 &  -- & 0.00031 & -- &  0.00100 &  -- &  0.0065 & 0.0215 \\
53  & 0.000196 & -1.90 & 6.92e-05 & 1.31 & 2.04e-04 & 0.35 & 0.00062 & -1.72 &  0.00061 &  1.22 &  0.0344 & 0.0332 \\
101  & 9.79e-05 & 2.16 & 3.36e-05 & 2.24 & 1.86e-04 & 0.28 & 0.00031 & 2.16 &  0.00024 &  2.74 &  0.0368 & 0.0295 \\
151  & 6.11e-05 & 2.34 & 2.16e-05 & 2.18 & 1.10e-04 & 2.61 & 0.00019 & 2.34 &  0.00015 &  2.33 &  0.0672 & 0.0539 \\
333  & 1.56e-05 & 3.46 & 8.83e-06 & 2.27 & 3.59e-05 & 2.84 & 5.01e-05 & 3.43 &  7.07e-05 &    2.02 &  0.0463 & 0.0654 \\
625  & 6.55e-06 & 2.75 & 6.25e-06 & 1.10 & 1.64e-05 & 2.48 & 2.17e-05 & 2.66 &  5.08e-05 &  1.05 &  0.0414 & 0.0973 \\
1493  & 3.08e-06 & 1.73 & 2.38e-06 & 2.21 & 7.87e-06 & 1.69 & 1.02e-05 & 1.76 &  2.39e-05 &  1.74 &  0.0408 & 0.0971 \\
2837  & 1.55e-06 & 2.15 & 1.19e-06 & 2.14 & 4.01e-06 & 2.11 & 5.04e-06 & 2.15 &  1.32e-05 &  1.84 &  0.0362 & 0.0989 \\
6285  & 6.99e-07 &   2.00 & 5.22e-07 & 2.09 & 1.83e-06 & 1.96 & 2.27e-06 & 2.01 &  6.74e-06 &  1.69 &  0.0345 & 0.1020 \\
14631  & 3.42e-07 & 1.69 & 2.29e-07 & 2.03 & 8.24e-07 & 1.90 & 1.11e-06 & 1.71 &  3.24e-06 &  1.73 &  0.0340 & 0.1020 \\
28095  & 1.81e-07 & 1.95 & 1.12e-07 & 2.09 & 4.50e-07 & 1.85 & 5.83e-07 & 1.95 &  1.86e-06 &  1.72 &  0.0318 & 0.1020 \\
63113  & 8.46e-08 & 1.88 & 4.96e-08 & 2.02 & 2.02e-07 & 1.98 & 2.72e-07 & 1.88 &  9.23e-07 &  1.73 &  0.0295 & 0.1019 \\
\hline
\end{tabular}}
\end{center}
\caption{Example~2B. Error history and effectivity indexes \eqref{eq:indexes} associated with the {\it a posteriori} error estimator \eqref{globalestimator2} using 
$\delta  =2/3$. Steep solutions on an L-shaped domain. 
Approximation with $k = 1$, and velocity postprocessing using \eqref{eq:vel3}.}  \label{table:aposte-L}
\end{table}

\begin{figure}[t]
\begin{center}
\includegraphics[width=0.245\textwidth]{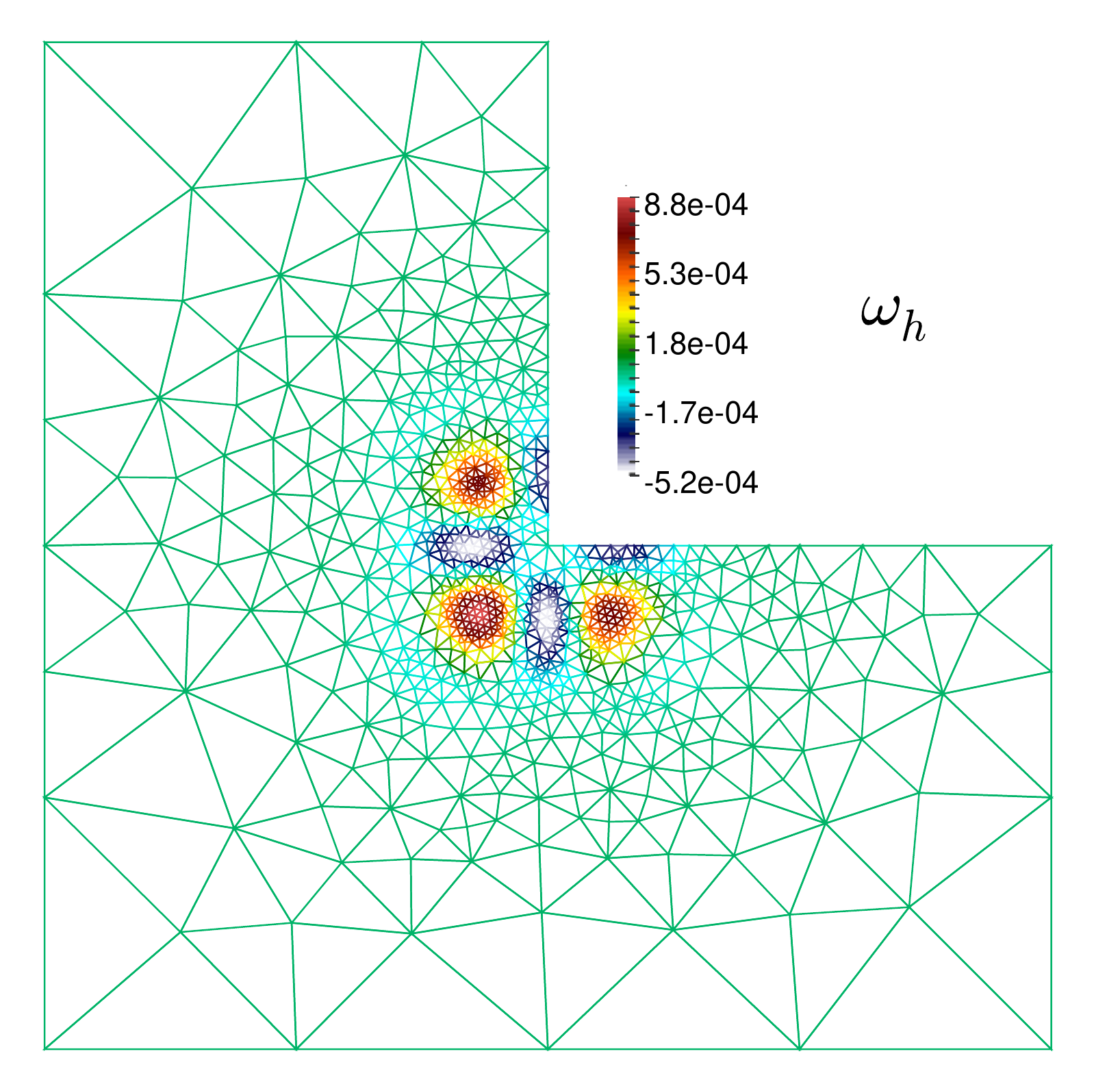}
\includegraphics[width=0.245\textwidth]{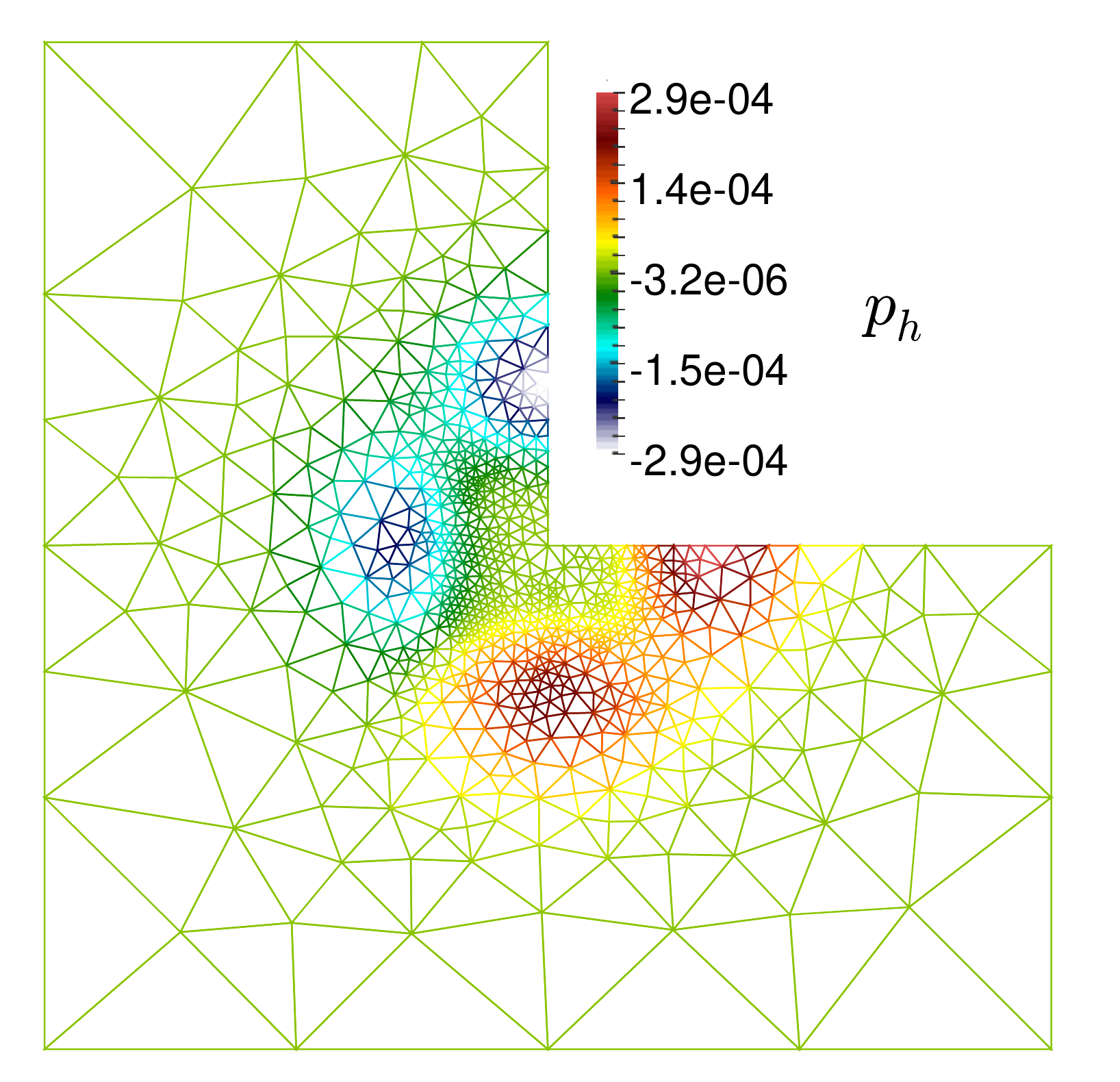}
\includegraphics[width=0.245\textwidth]{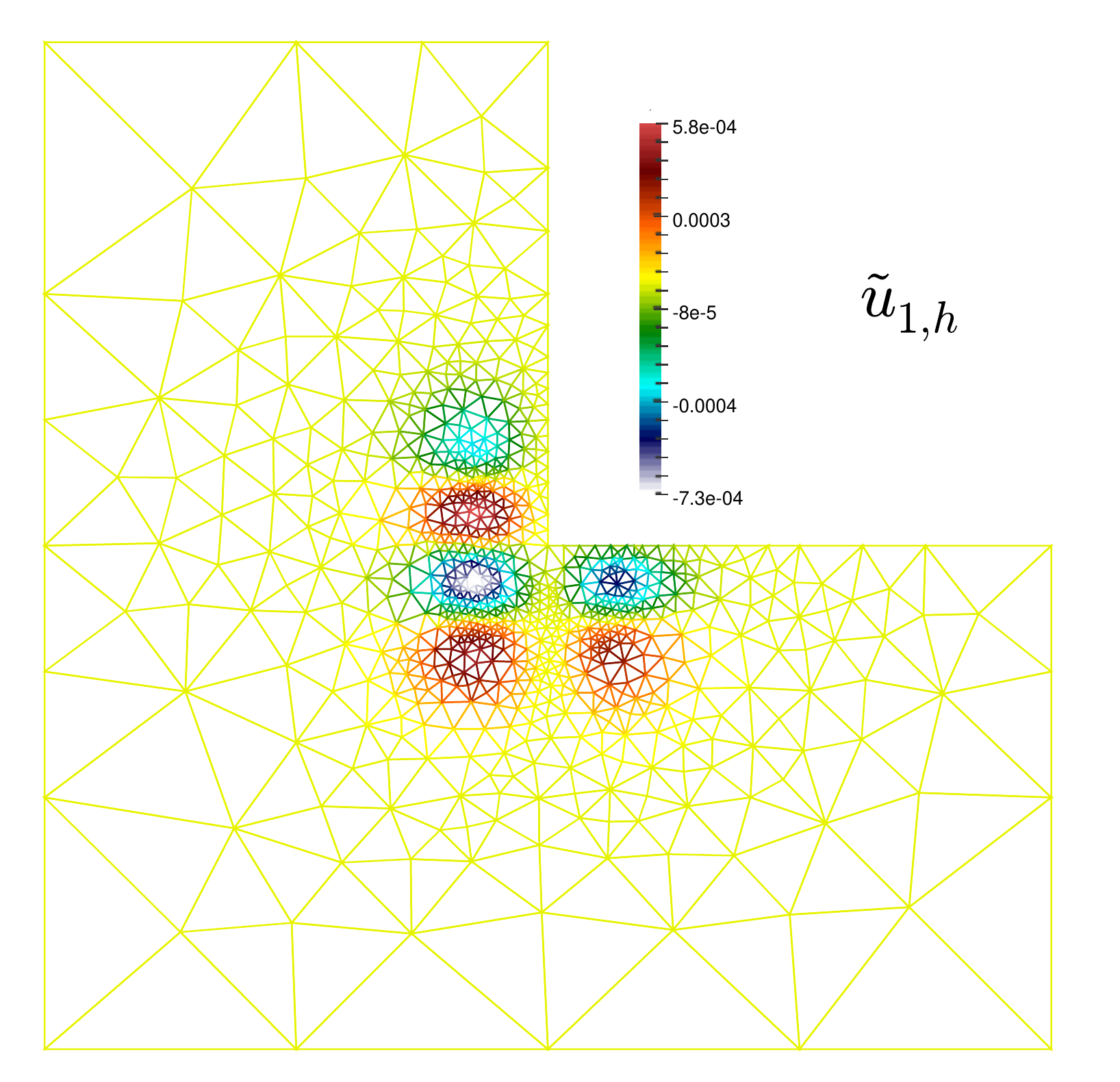}
\includegraphics[width=0.245\textwidth]{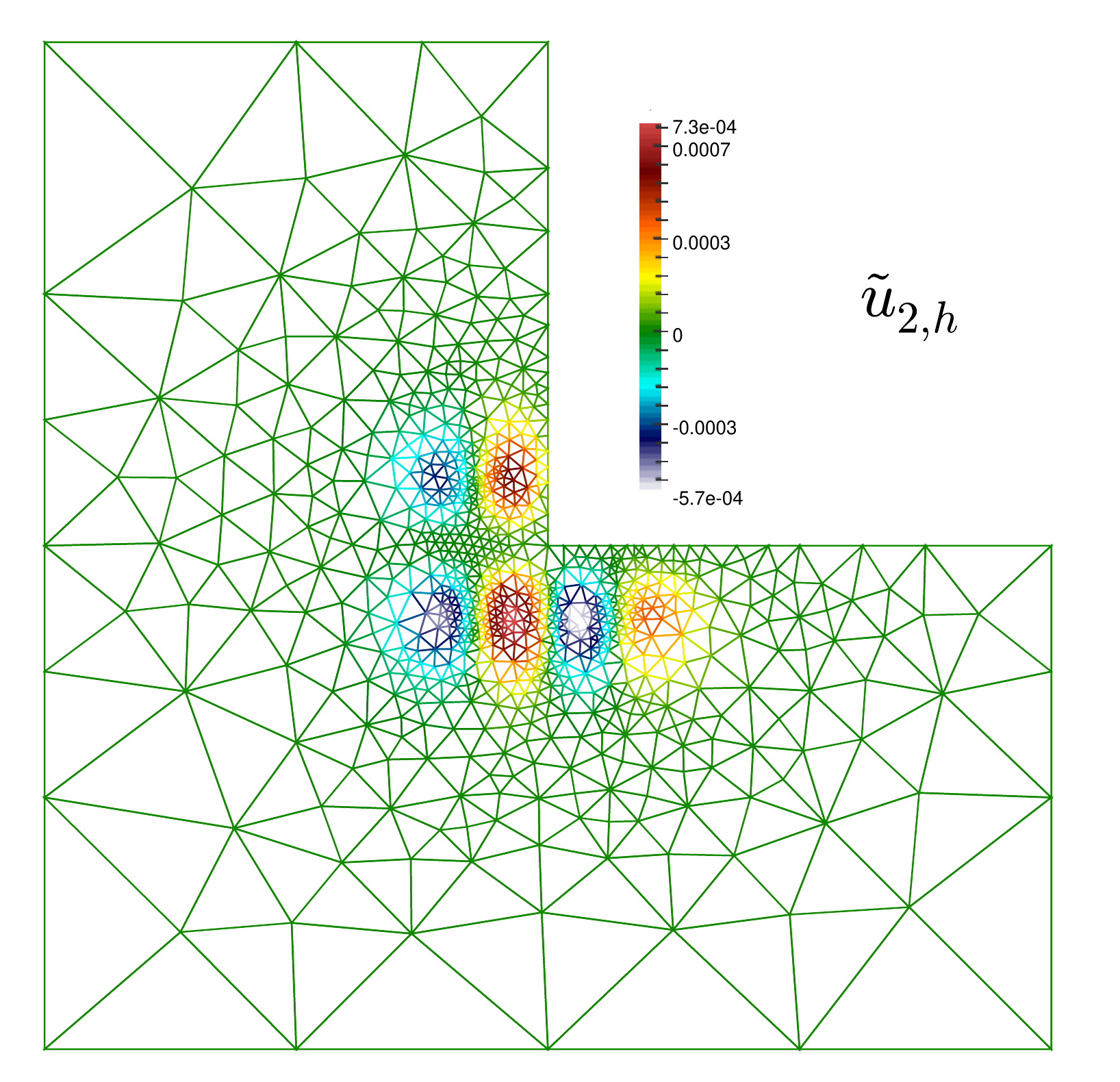}
\end{center}
\caption{Example~2B. Approximate vorticity, Bernoulli pressure, and velocity components 
obtained from \eqref{eq:vel3}. Solutions computed after six steps of adaptive mesh refinement following \eqref{globalestimator2} 
with $\delta = 2/3$.}  \label{fig:ex02b-sols}
\end{figure}

Next, as Examples 2B and 2C, we consider exact solutions with higher gradients and see how the estimator performs guiding adaptive 
mesh refinement as well as restoring optimal convergence rates. 
For this we follow a standard procedure 
of solving the discrete problem $\rightarrow$ estimating the error $\rightarrow$ marking cells for refinement $\rightarrow$ refining the mesh $\rightarrow$ solving again.  The marking is based on the equi-distribution 
of the error in such a way that the diameter of each new element   
(contained in a generic triangle $T$ on the initial coarse mesh) is proportional to the 
initial diameter times the ratio $\bar{\tilde\betta}_h / \boldsymbol{\eta}_{T}$, 
where $\bar{\tilde\betta}_h$ is  the mean value of $\tilde\betta$  over the initial 
mesh \cite{rv-1996}. The refinement is then done on the marked elements as well as 
on an additional small layer in order to maintain the regularity of the resulting grid. An extra smoothing 
step is also applied after the refinement step. 

For Example 2B we concentrate on the L-shaped domain $\Omega = (-1,1)^2\setminus(0,1)^2$, and use the exact solutions
\begin{gather*}
\varphi(x,y):= x^2(1-x)^2y^2(1-y)^2 \exp(-50(x-0.01)^2-50(y-0.01)^2), \quad \bu(x,y):= \curl \varphi, \\ 
p(x,y):= (x^5-y^5)\exp(-25(x-0.01)^2-25(y-0.01)^2), \quad   \bomega(x,y):= \sqrt{\nu} \curl\bu,\end{gather*}
employed also to compute boundary data and right-hand side forcing terms. We keep the values of $\nu,\sigma$ from Example 1. The regularity of the coupled problem (due to the corner singularity) indicates that $\delta= 2/3$.
We collect the results in Table~\ref{table:aposte-L}, showing similar trends as those seen in Table~\ref{table:aposte}, that is, optimal convergence for all fields, and robustness of the {\it a posteriori} error estimator in the $\mV$-norm. Samples of 
approximate vorticity, Bernoulli pressure, and post-processed velocity, also for the case of $\delta = 2/3$, and after six steps of adaptive mesh refinement are shown in Figure~\ref{fig:ex02b-sols}. 

\begin{figure}[t]
\begin{center}
\includegraphics[width=0.45\textwidth]{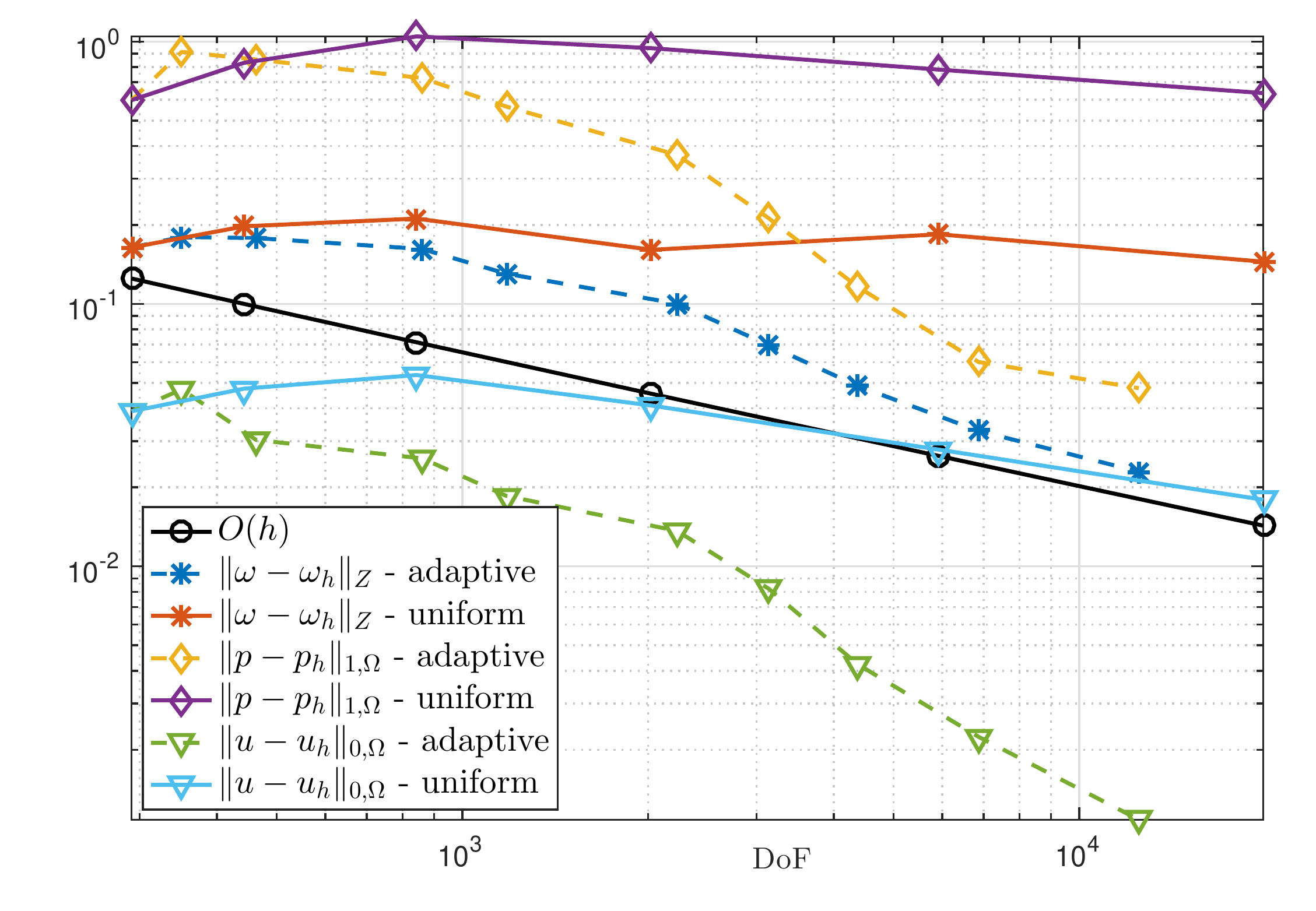}
\includegraphics[width=0.45\textwidth]{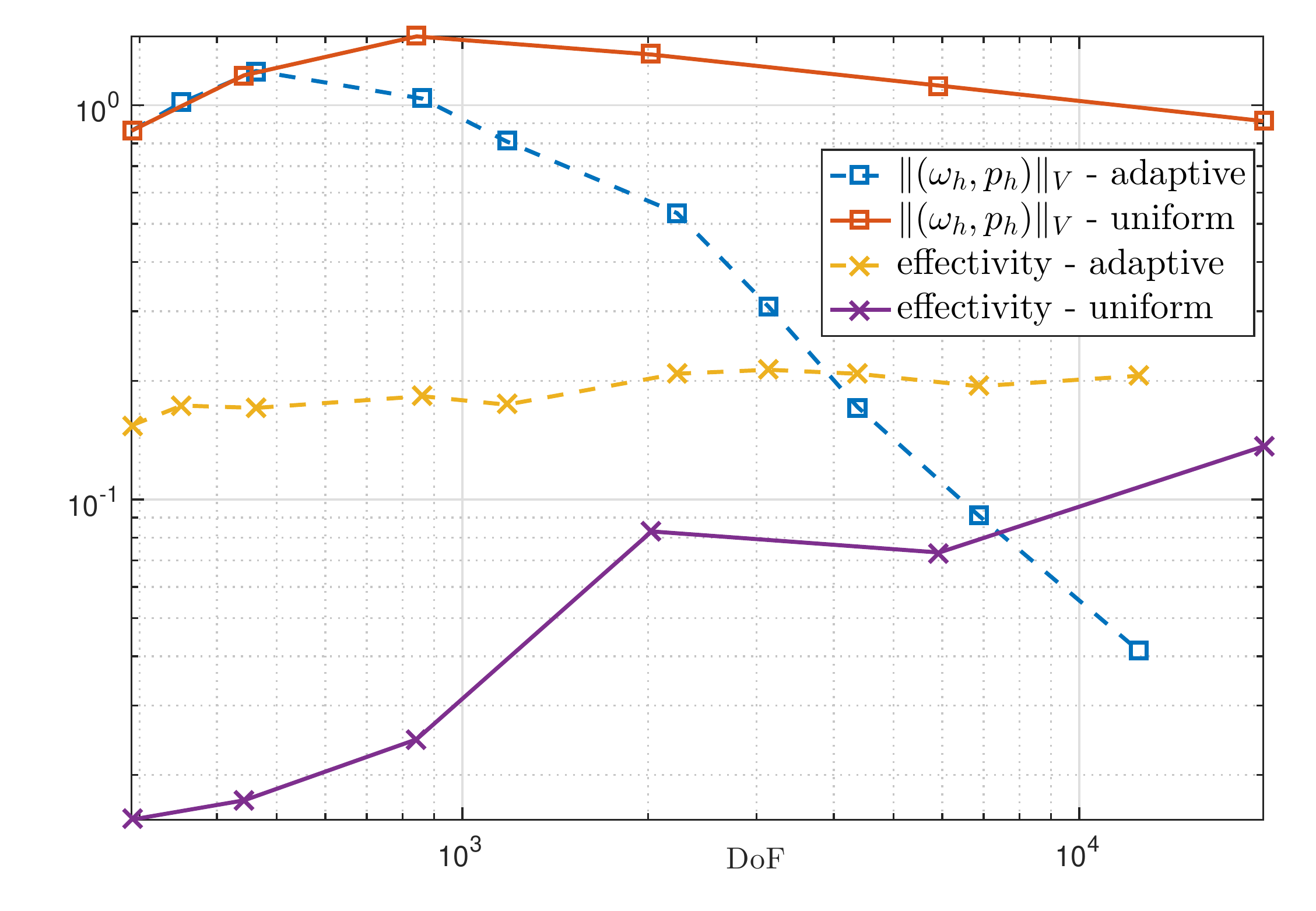}\\[1ex]
\includegraphics[width=0.3\textwidth]{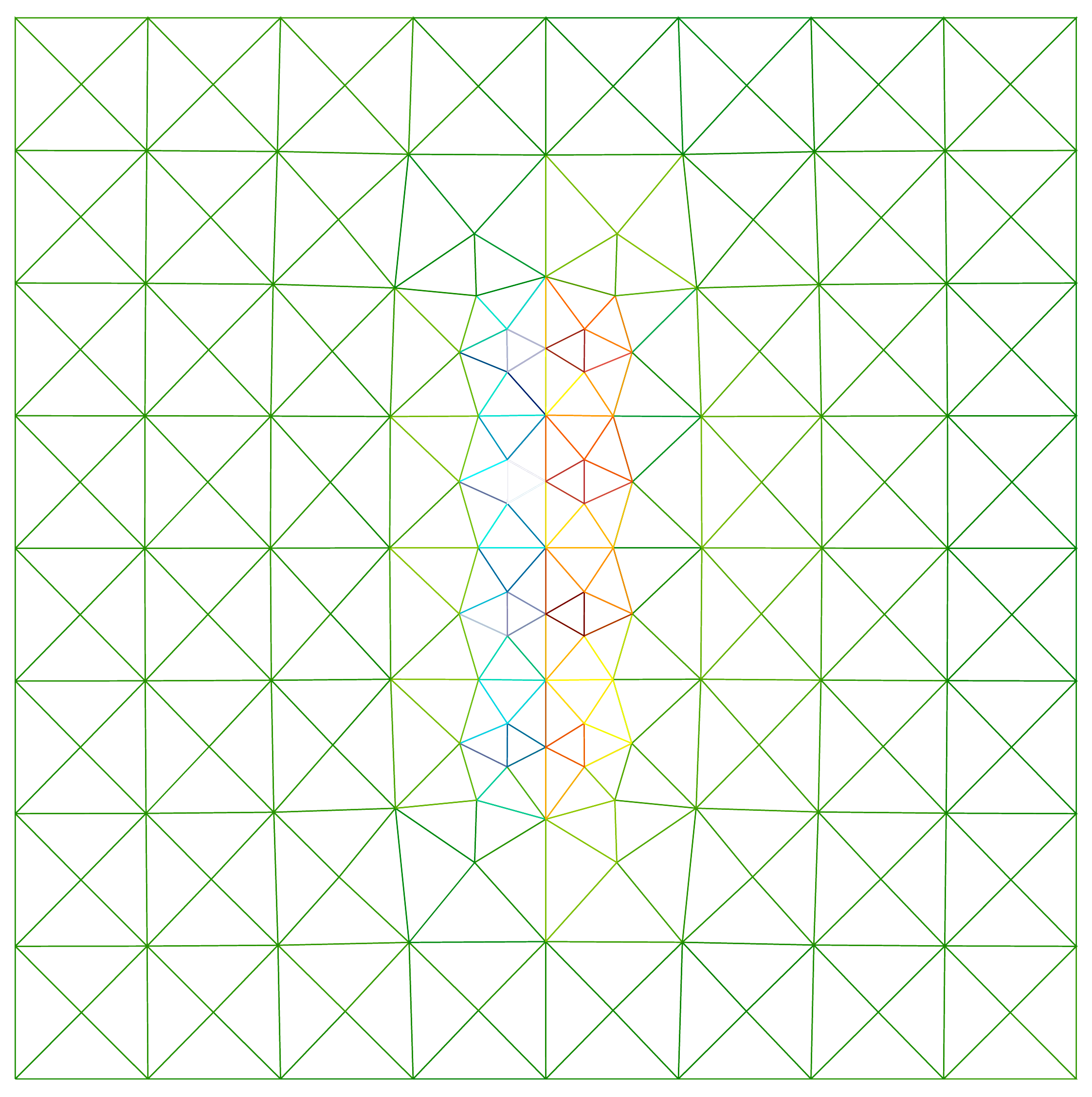}
\includegraphics[width=0.3\textwidth]{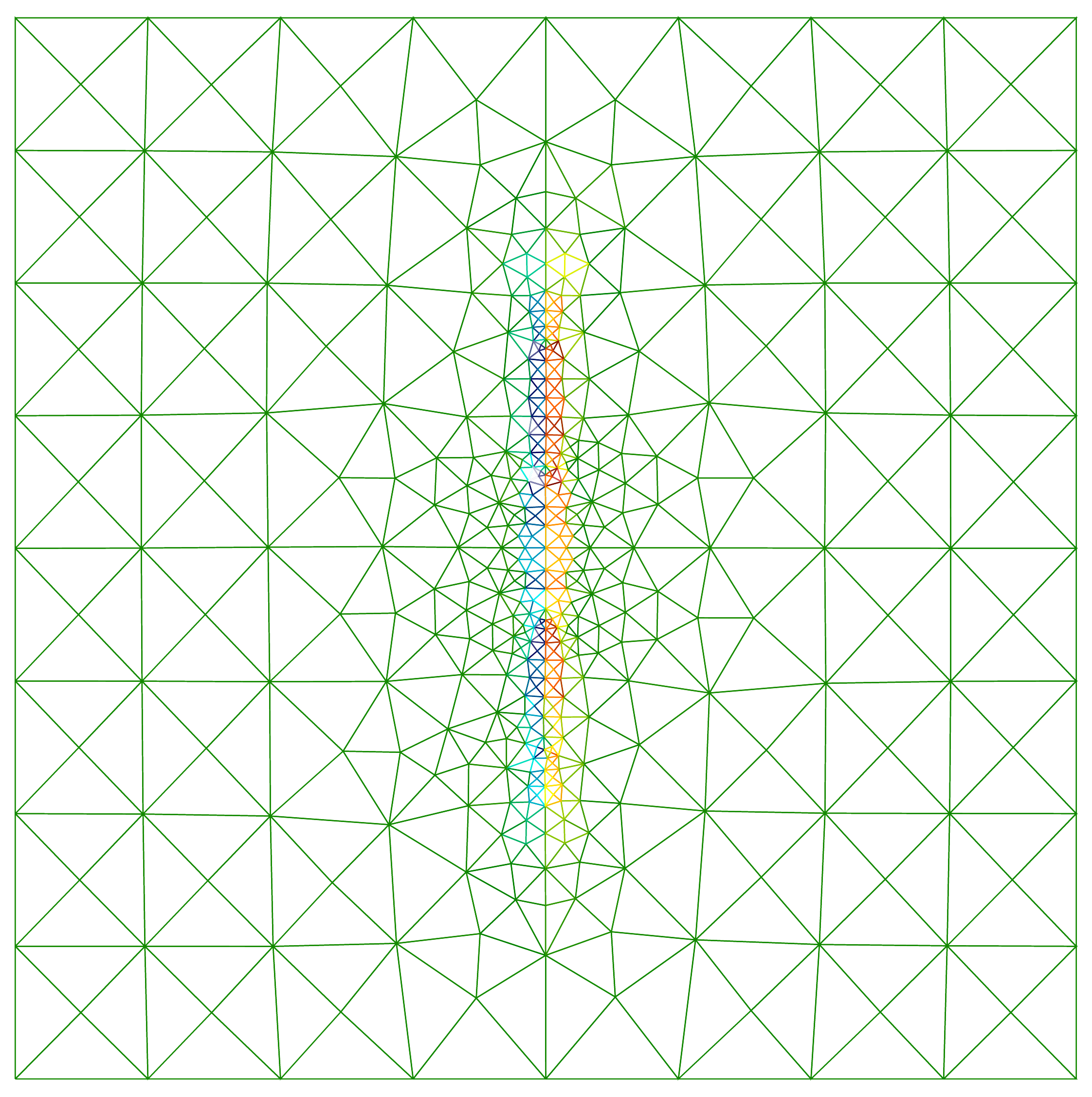}
\includegraphics[width=0.3\textwidth]{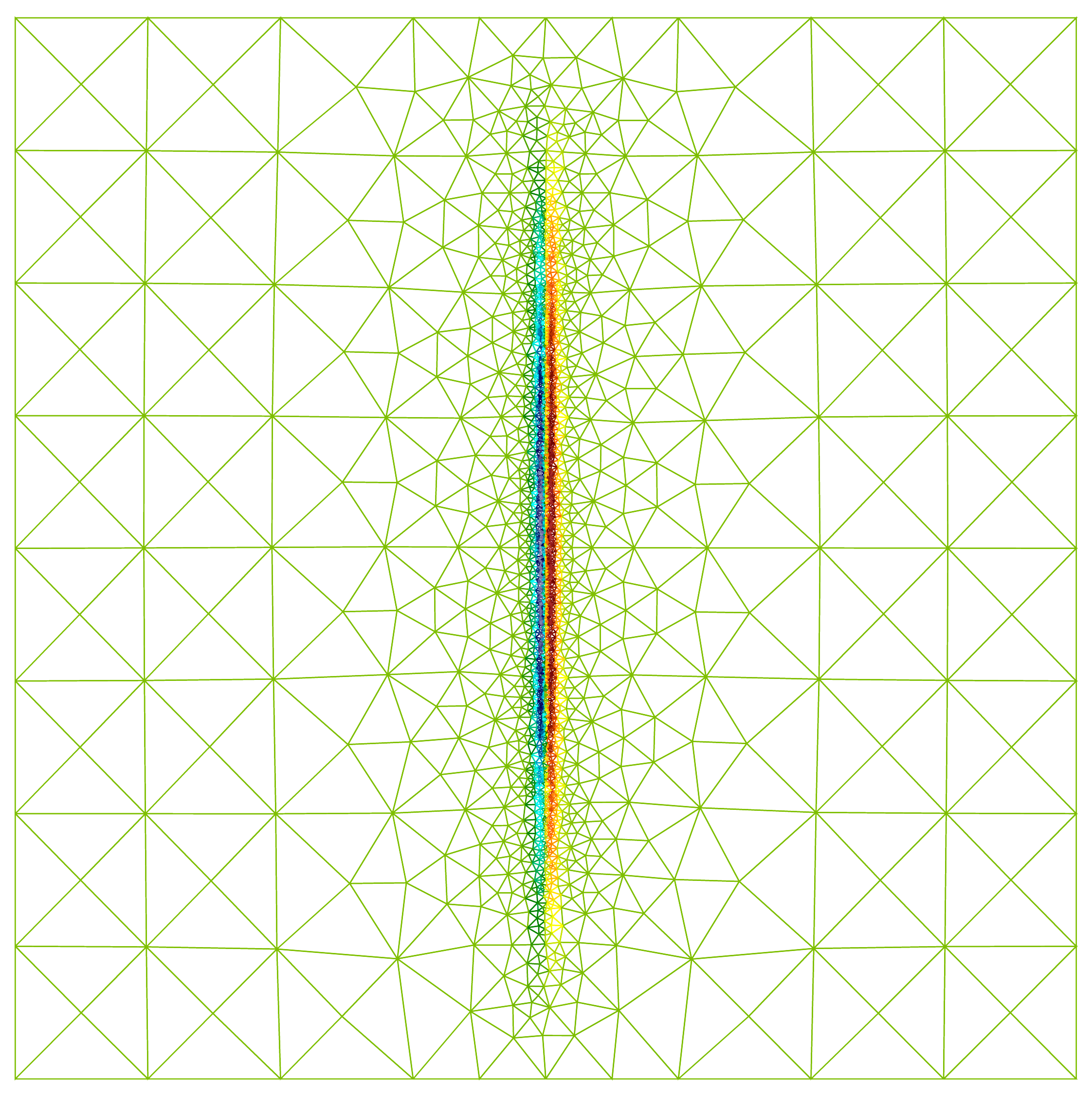}\\
\includegraphics[width=0.25\textwidth]{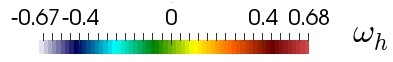}
\end{center}
\caption{Example~2C. Error decay in different norms and effectivity index $\texttt{eff}_2$ for the finite element approximation of the Oseen equations 
having an inner layer. Comparison plots between uniform (solid lines) and adaptive (dashed lines)  
mesh refinement using the lowest-order scheme (top panels); and examples of 
meshes produced after one, three, and six steps of adaptive refinement (bottom row).}  \label{fig:ex2}
\end{figure}

For Example 2C, starting from a coarse initial triangulation of the domain, we construct sequences 
of uniformly and adaptively refined meshes and compute errors between approximate solutions 
and the following closed-form solutions exhibiting a vertical inner layer near the central axis of the domain 
(see \cite{barrios}) 
\begin{gather*}
\varphi(x,y):= x^2(1-x)^2y^2(1-y)^2[1-\tanh(150(1/2-x))], \quad \bu(x,y):= \curl \varphi, \\ 
p(x,y):= e^{-(x-1/2)^2}-p_0, \quad   \bomega(x,y):= \sqrt{\nu} \curl\bu,\end{gather*}
 where $p_0$ is such the average of $p$ over $\Omega$ is zero, and we take $\nu = 10^{-4}$, $\sigma = 10$, and  $\bbbeta(x,y):=  \curl \varphi$. Again we take Dirichlet velocity conditions everywhere on $\partial\Omega$. 
 
Figure~\ref{fig:ex2} shows the error history in both cases, confirming that the method constructed 
upon adaptive mesh refinement provides rates of convergence slightly better than the theoretical optimal, 
whereas under uniform refinement the lack of smoothness in the exact solutions  hinder substantially the error decay, 
exhibiting sublinear convergence in all cases and even stagnating for vorticity. 
The top left plot portrays the individual errors, and for reference the optimal error decay for the case of less regular solutions (that is, $O(h)$); whereas the 
right panel shows the error in the $\mV$-norm and the effectivity index $\texttt{eff}_2$ defined in \eqref{eq:indexes}.   
In addition, the bottom panels of Figure~\ref{fig:ex2} display the outputs 
of mesh refinement indicating a higher concentration of elements where the large gradients 
are located.


\begin{figure}[t]
\begin{center}
\includegraphics[width=0.4\textwidth]{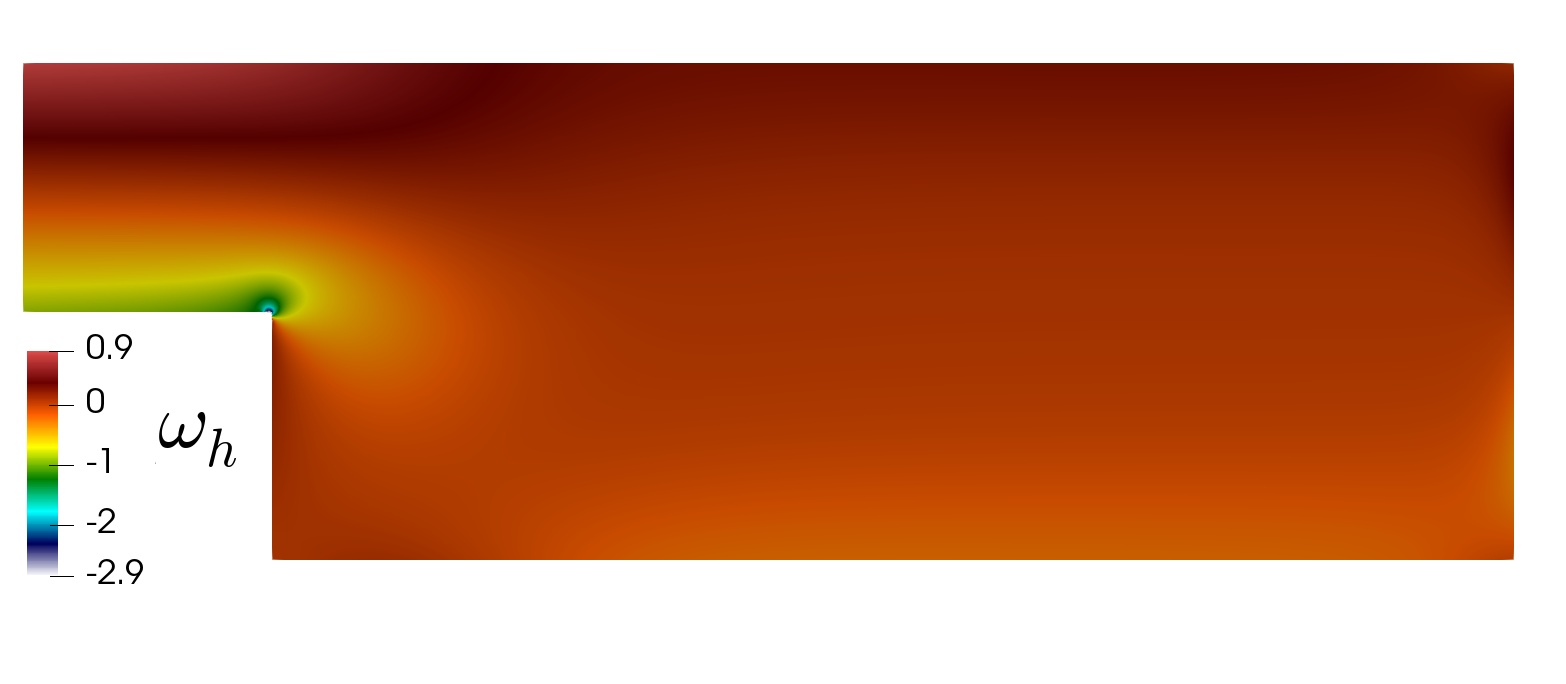}
\includegraphics[width=0.4\textwidth]{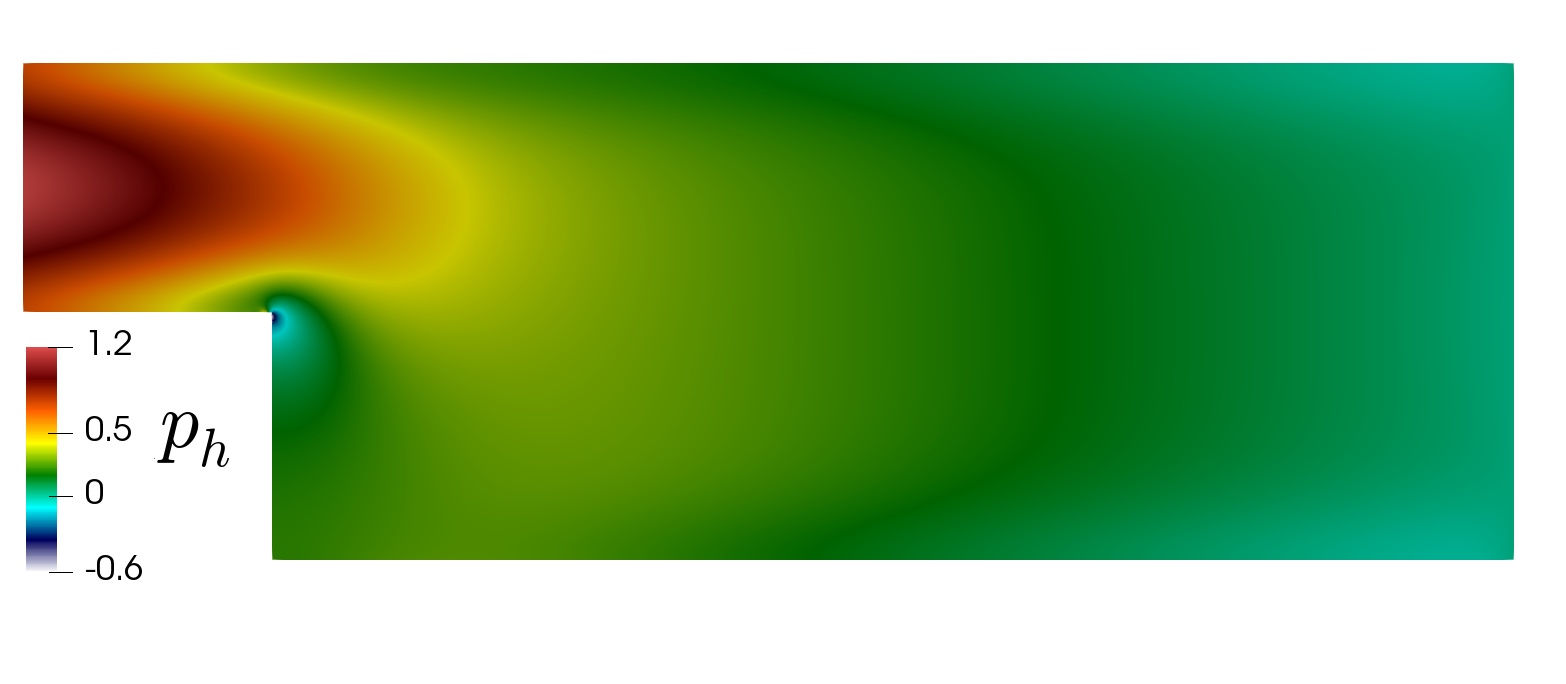}\\
\includegraphics[width=0.4\textwidth]{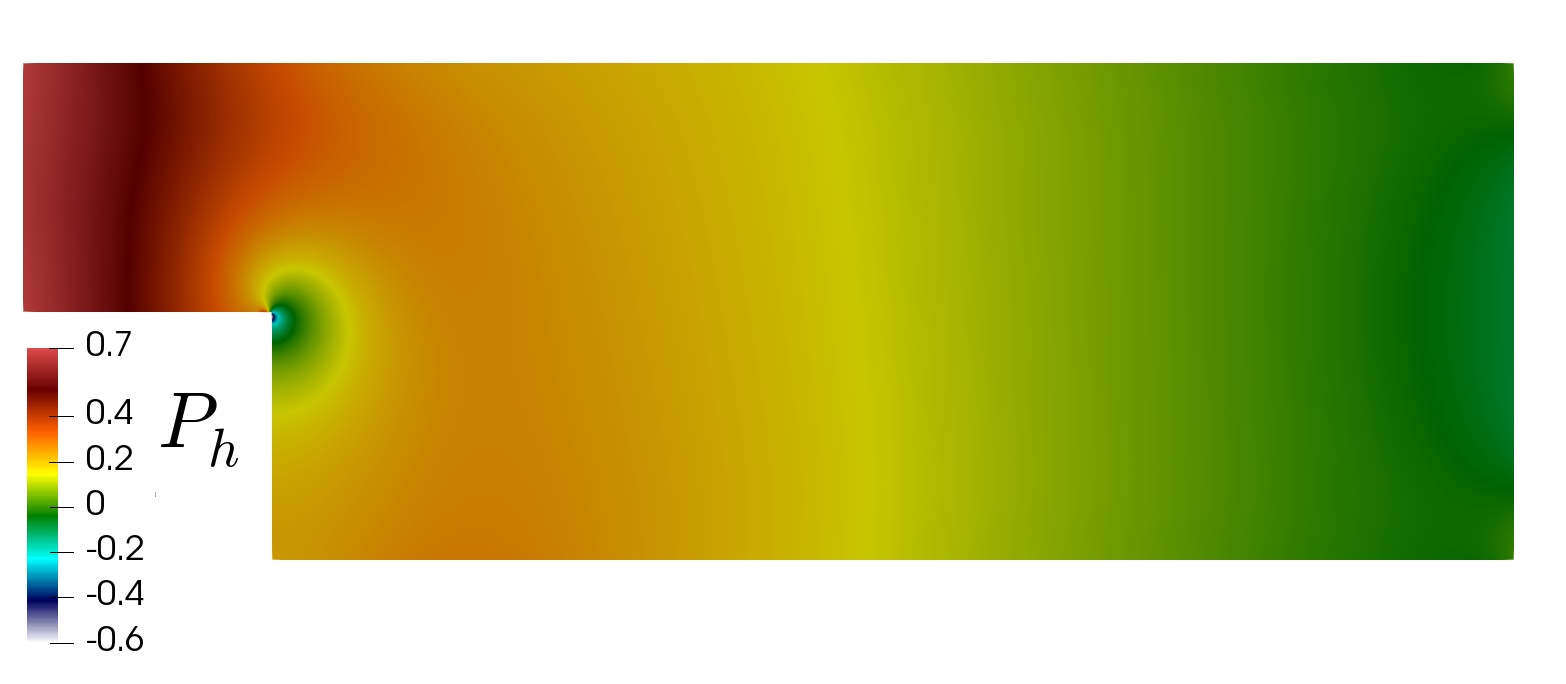}
\includegraphics[width=0.4\textwidth]{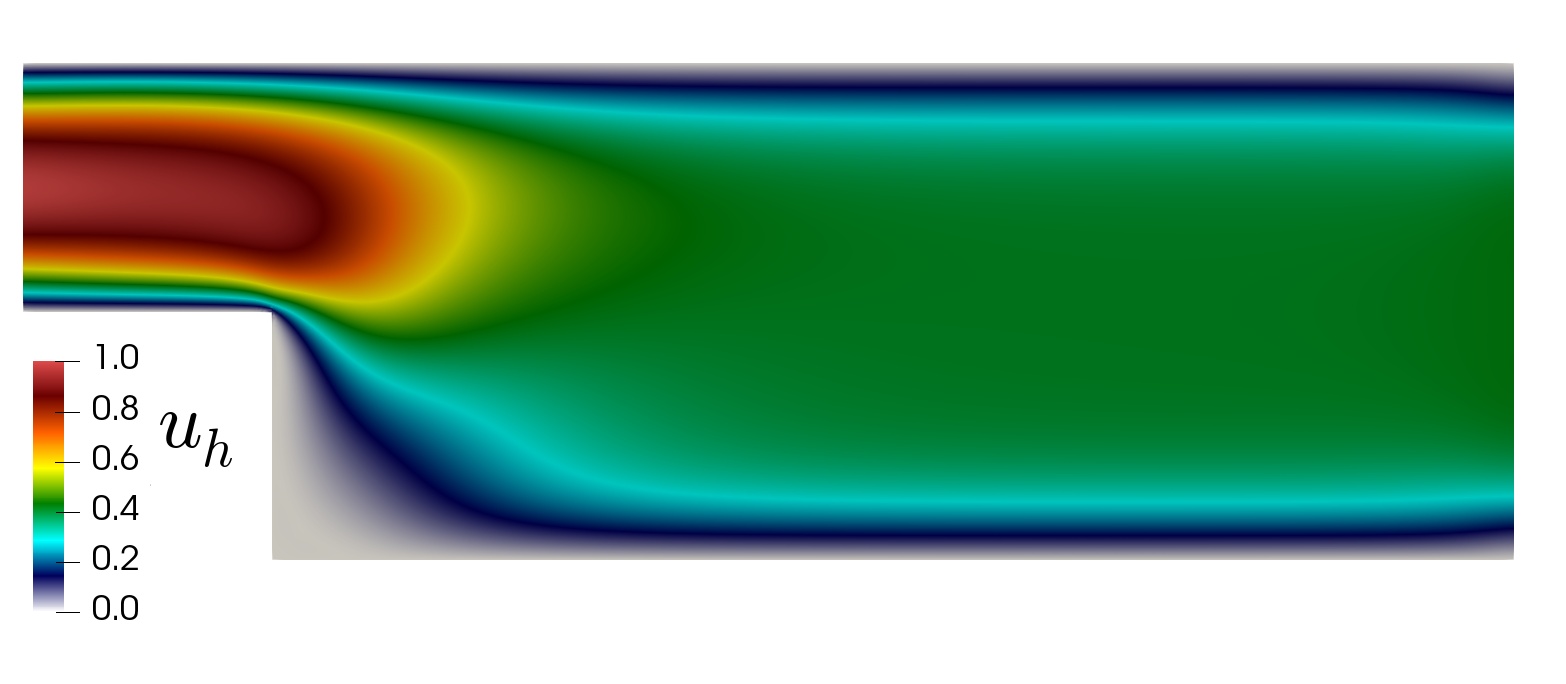}
\raisebox{5mm}{\includegraphics[width=0.18\textwidth]{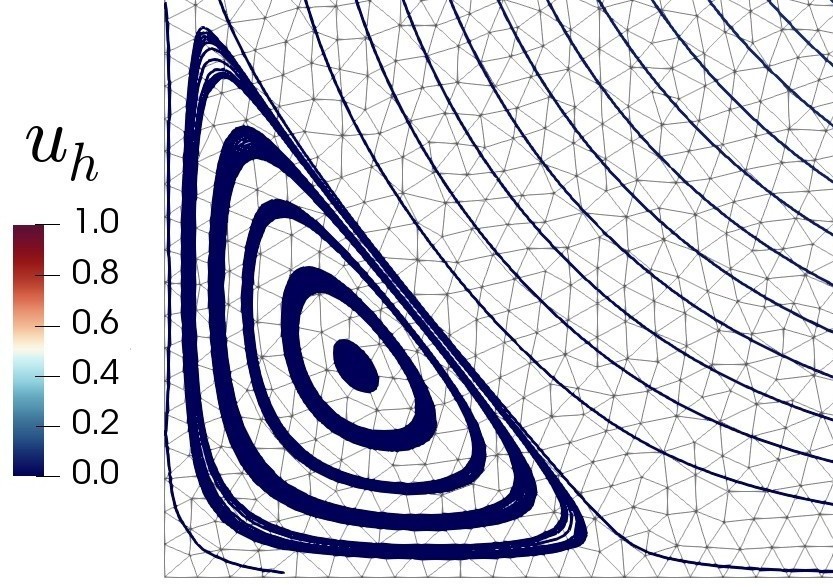}}
\end{center}
\caption{Example 3. Flow over a backward-facing step. Vorticity, Bernoulli pressure, true pressure, post-processed velocity, and zoom-in on bottom-left corner with velocity streamlines.}\label{ex02:fig}
\end{figure}

\medskip\noindent\textbf{Example 3.} 
Next, we conduct the well-known test of flow past a backward-facing step. This is also a 
2D example where the domain is $\Omega = (0,6)\times(0,2) \setminus (0,1)^2$. For this case 
we choose a method with $k=2$ and assume that $\bbbeta$ is the discrete velocity at the previous time 
iteration of a backward Euler time step. Assuming that no external forces are applied, we then have 
$\ff = \sigma\bbbeta$ and after each time step characterised by $\sigma = (\Delta t)^{-1} = 100$, we update 
the current velocity $\bbbeta \leftarrow \bu$.  The flow regime is determined by a moderate viscosity $\nu = 0.05$ and we prescribe 
$\Gamma_2$ as the right edge (the outlet of the channel) where we set $p_0 = 0$ and $\ba = \cero$. 
The remainder of the boundary constitutes $\Gamma_1$: on the left edge (the inlet of the channel) we  
impose a parabolic profile $\bg = (4(y-1)(2-y),0)^T$ and on the remainder of $\Gamma_1$ (the channel 
walls) we set $\bg = \cero$. The system is run until the final time $t = 1$ and samples of the 
obtained numerical results are collected in Figure~\ref{ex02:fig}. As expected for this test, a fully developed 
profile (seen in the plot of post-processed velocity) exits the outlet while an important recirculation occurs on the 
bottom-left corner, right after the expanding region. The vorticity has a very high gradient on the reentrant corner 
of the channel, but this is well-captured by the numerical scheme. We also show Bernoulli pressure and the 
classical pressure (which coincides with the expected pressure profiles for this example). In addition, in Figure~\ref{ex02a:fig} 
we portray examples of adaptively refined meshes using the indicator \eqref{globalestimator2}. One can observe 
local refinement near the reentrant corner and at later times, a clustering of elements near the horizontal walls in the 
channel. 

\begin{figure}[t]
\begin{center}
\includegraphics[width=0.45\textwidth]{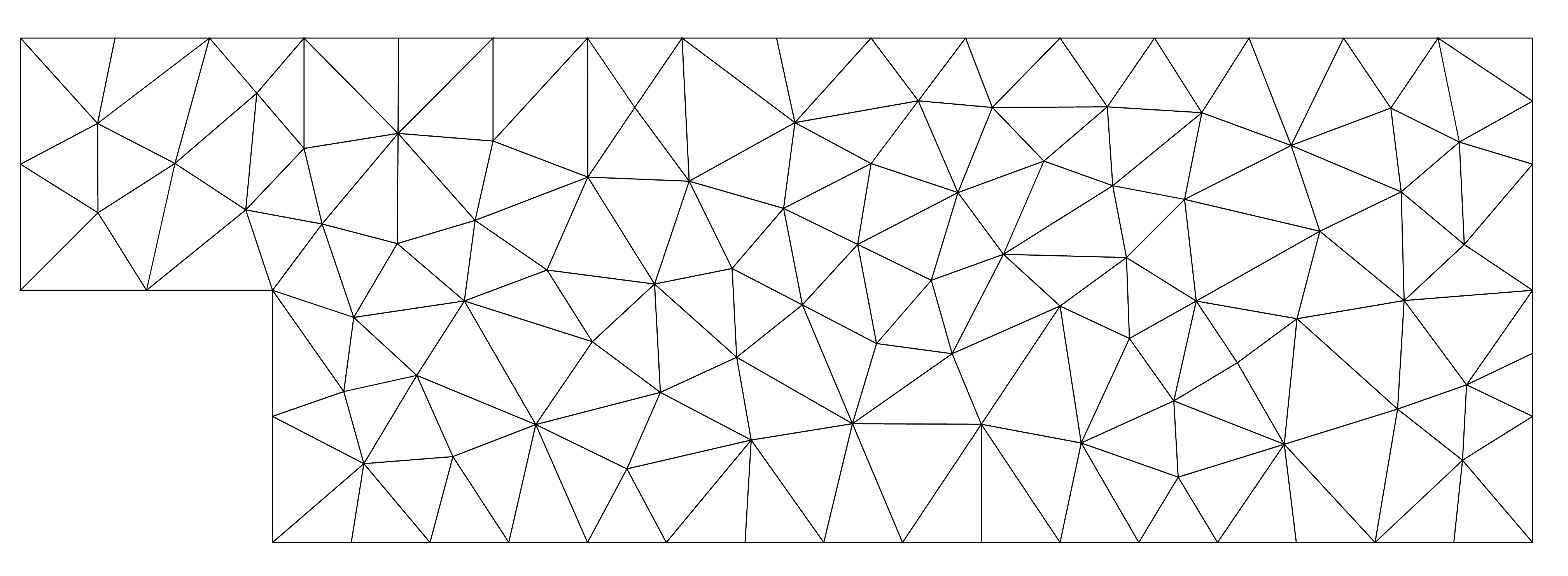}
\includegraphics[width=0.45\textwidth]{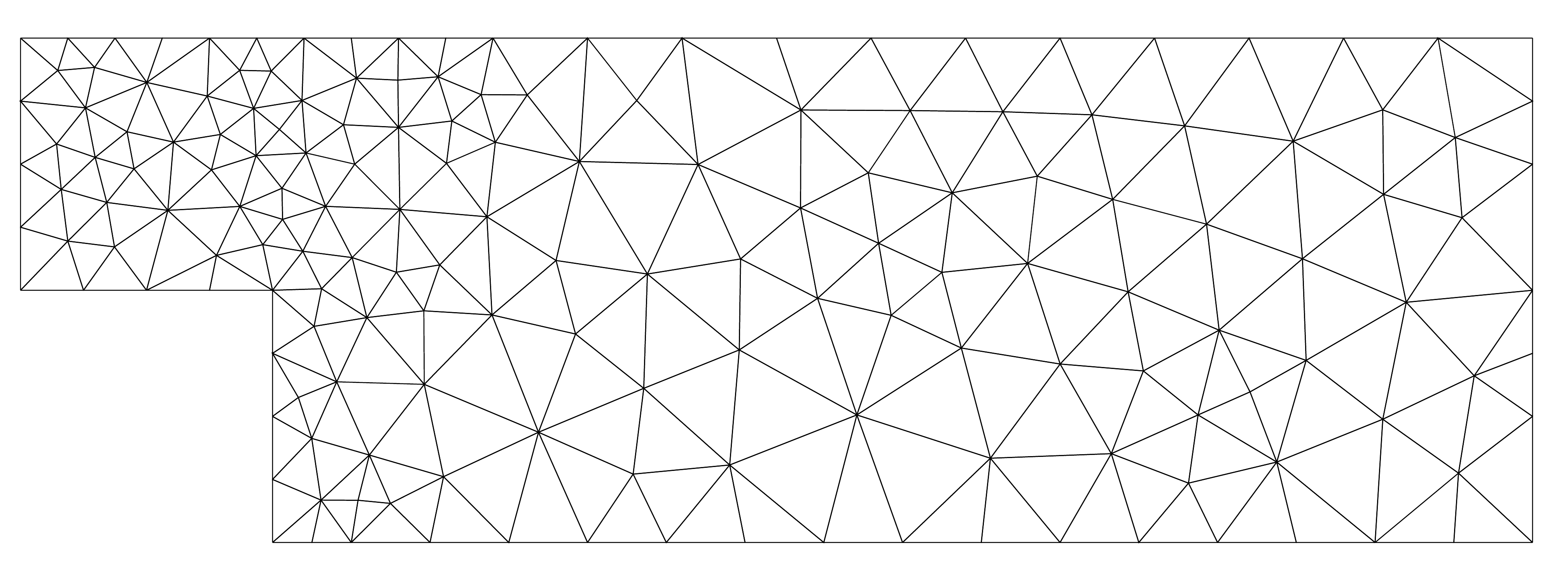}\\
\includegraphics[width=0.45\textwidth]{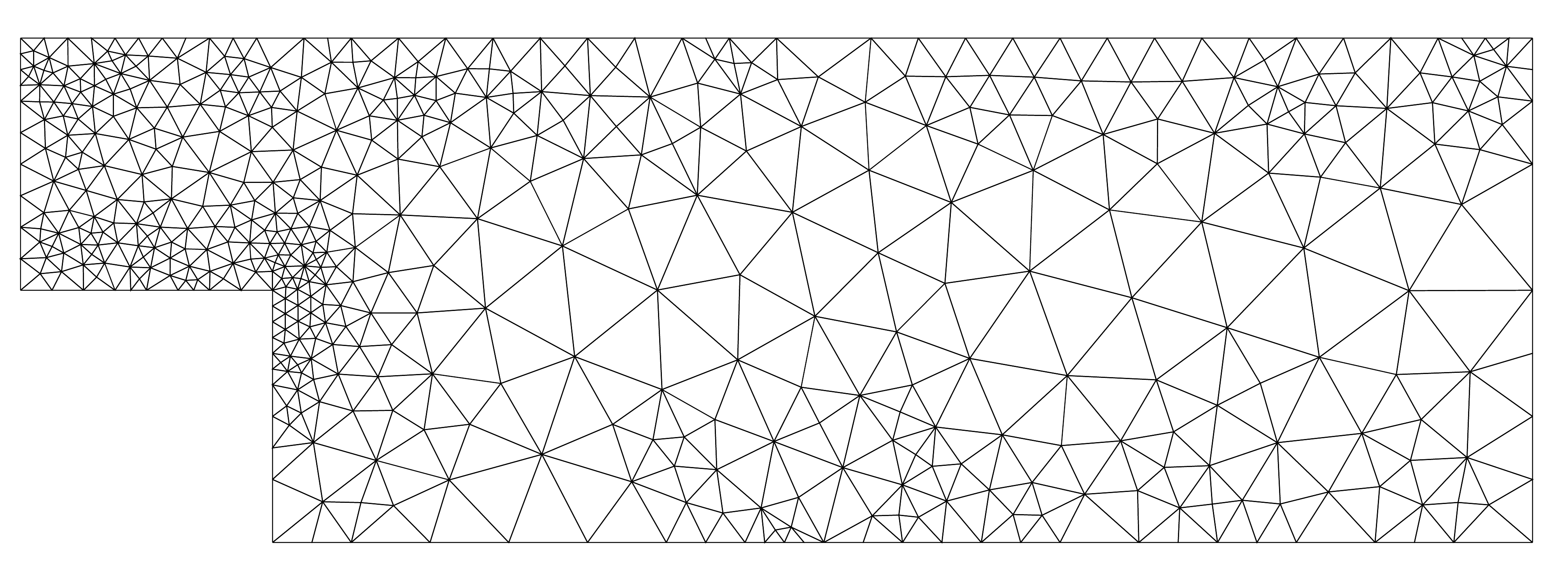}
\includegraphics[width=0.45\textwidth]{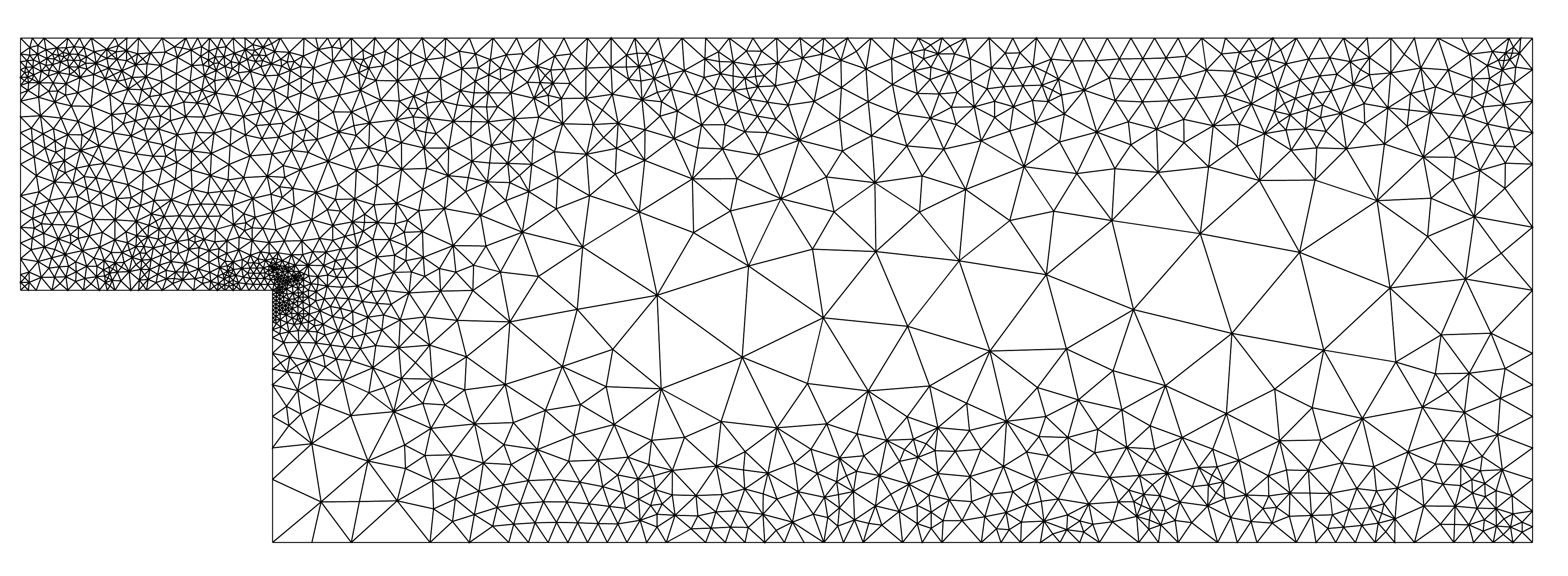}
\end{center}
\caption{Example 3. Flow over a backward-facing step. Adaptively refined meshes 
according to the {\it a posteriori} error indicator \eqref{globalestimator2}, applying up to four refinement steps (from top-left to right-bottom).}\label{ex02a:fig}
\end{figure}

\medskip\noindent\textbf{Example 4.} 
For our next application we study the flow patterns generated on a channel with three obstacles (using the domain and 
boundary configuration from the micro-macro models introduced in \cite{torr17}). Here the flow is now generated only through pressure 
difference between the inlet (the bottom horizontal section of the boundary defined by $(0,1)\times\{-2\}$) and the outlet (the vertical 
segment on the top left part of the boundary, defined by $\{-2\}\times(0,1)$). No other boundary conditions are set. As in the previous 
test case, $\bbbeta$ is the discrete velocity at the previous pseudo-time iteration. We take $\sigma = 10$ and 
$\nu = 0.02$ and increase the pressure at the inlet with the pseudo time, reaching after 10 steps the value 
$p_{\text{in}} = 3$ and set zero Bernoulli pressure at the outlet. The avoidance of the obstacles and accumulation 
of vorticity near them is a characteristic behaviour of the phenomenon that we can observe in Figure~\ref{ex03:fig}. 
These plots were generated with $k=2$. 

\begin{figure}[t]
\begin{center}
\includegraphics[width=0.325\textwidth]{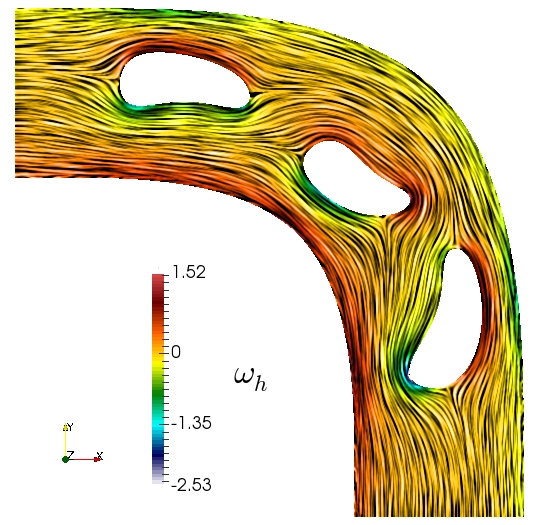}
\includegraphics[width=0.325\textwidth]{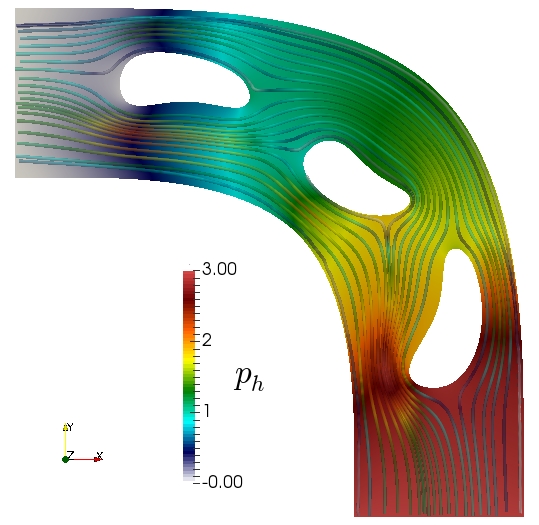}
\includegraphics[width=0.325\textwidth]{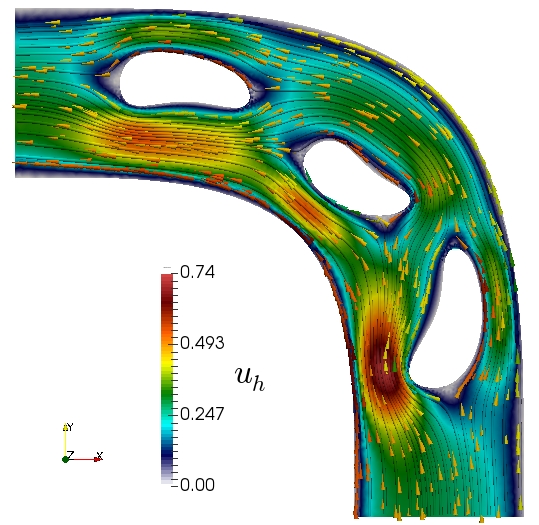}
\end{center}
\caption{Example 4. Flow inside a channel with obstacles. Vorticity and line integral contours, 
classical pressure together with velocity streamlines, and post-processed velocity magnitude and arrows. 
Computation done with a second-order method.}\label{ex03:fig}
\end{figure}

\medskip\noindent\textbf{Example 5.} 
Our last test exemplifies the performance of the numerical scheme 
in 3D. 
We use as computational domain the 
geometry of a femoral end-to-side bypass segmented from 3T MRI scans 
\cite{marchandise11}. We generate a volumetric mesh of 68351 
tetrahedra. The boundaries of this 
arterial bifurcation are considered as an inlet $\Gamma_{\text{in}}$, 
an outlet $\Gamma_{\text{out}}$, the arterial wall $\Gamma_{\text{wall}}$, and an 
occluded section $\Gamma_{\text{occl}}$. 
On the occlusion section and on the walls 
we set no-slip velocity. 
A parabolic velocity profile is considered at the inlet surface 
whereas a mean pressure distribution is prescribed 
on the outlet section. 
The last two conditions are time-dependent and periodic 
with a period of 50 time steps (we employ $\sigma = 100$ and 
run the system for 100 time steps). Moreover we use a blood viscosity of 
$\nu = 0.035$ (in g/cm$^3$), 
which represents an average Reynolds number between 144 and 
380 \cite{marchandise11}. The computations were carried out with the 
first-order scheme, and the results are shown in Figure~\ref{ex04:fig}, 
focusing on the solutions after 50 time steps. A relatively small zone 
with a secondary flow forms near the bifurcation, while the bulk stream 
continues towards the outlet.

\begin{figure}[t]
\begin{center}
\includegraphics[width=0.495\textwidth]{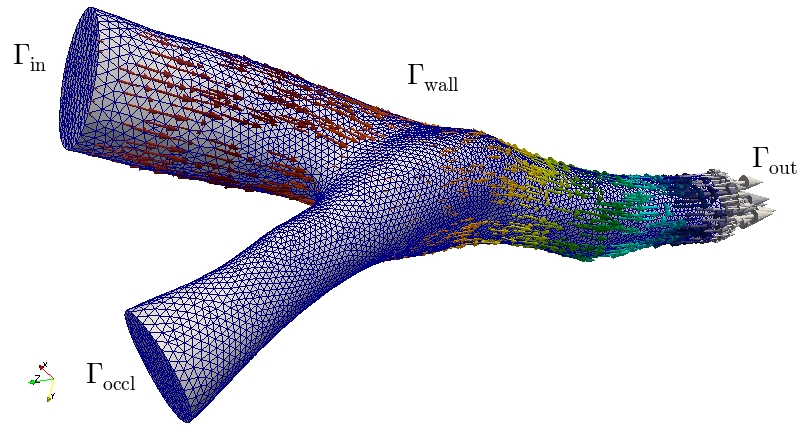}
\includegraphics[width=0.495\textwidth]{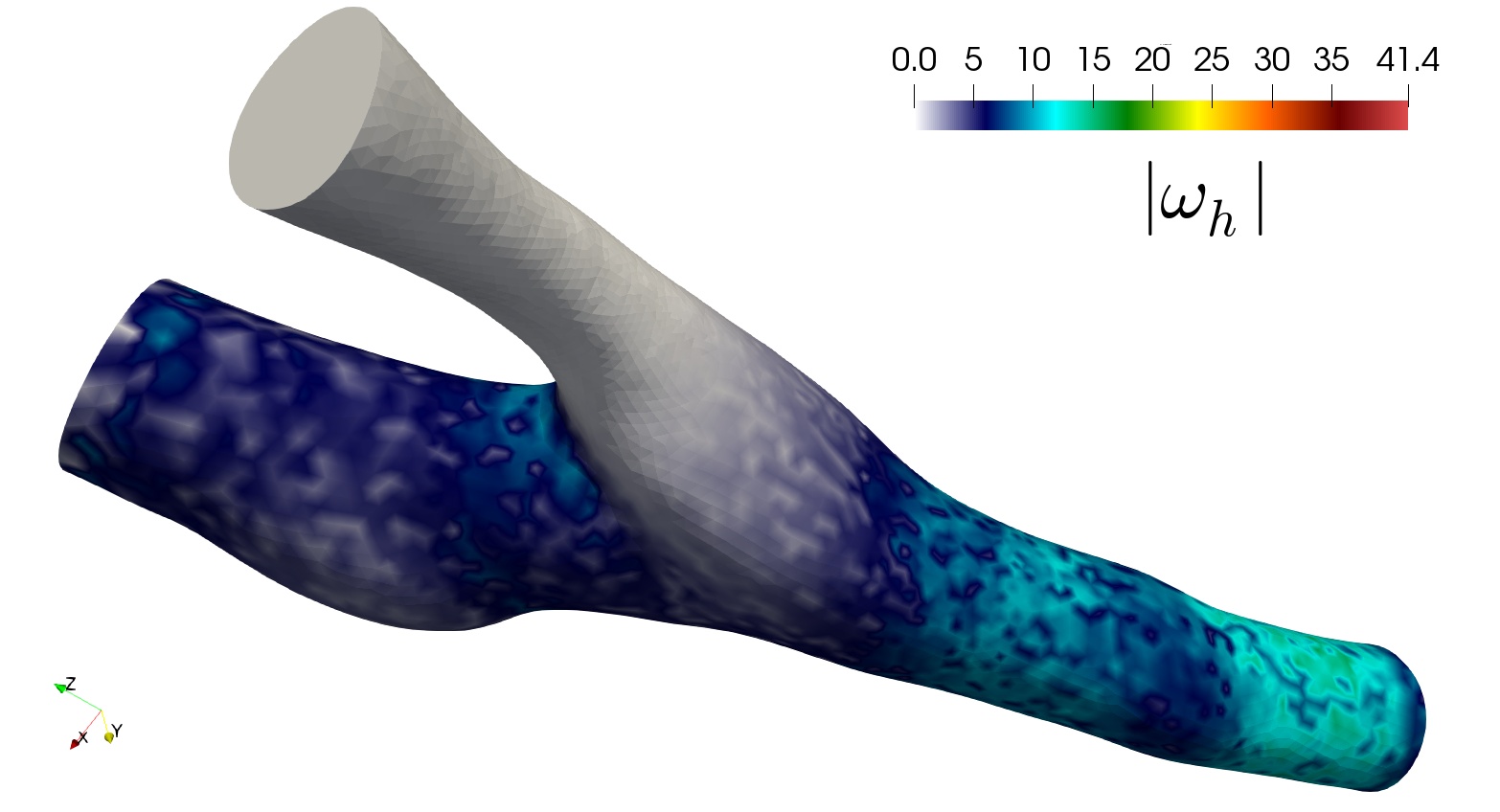}
\includegraphics[width=0.495\textwidth]{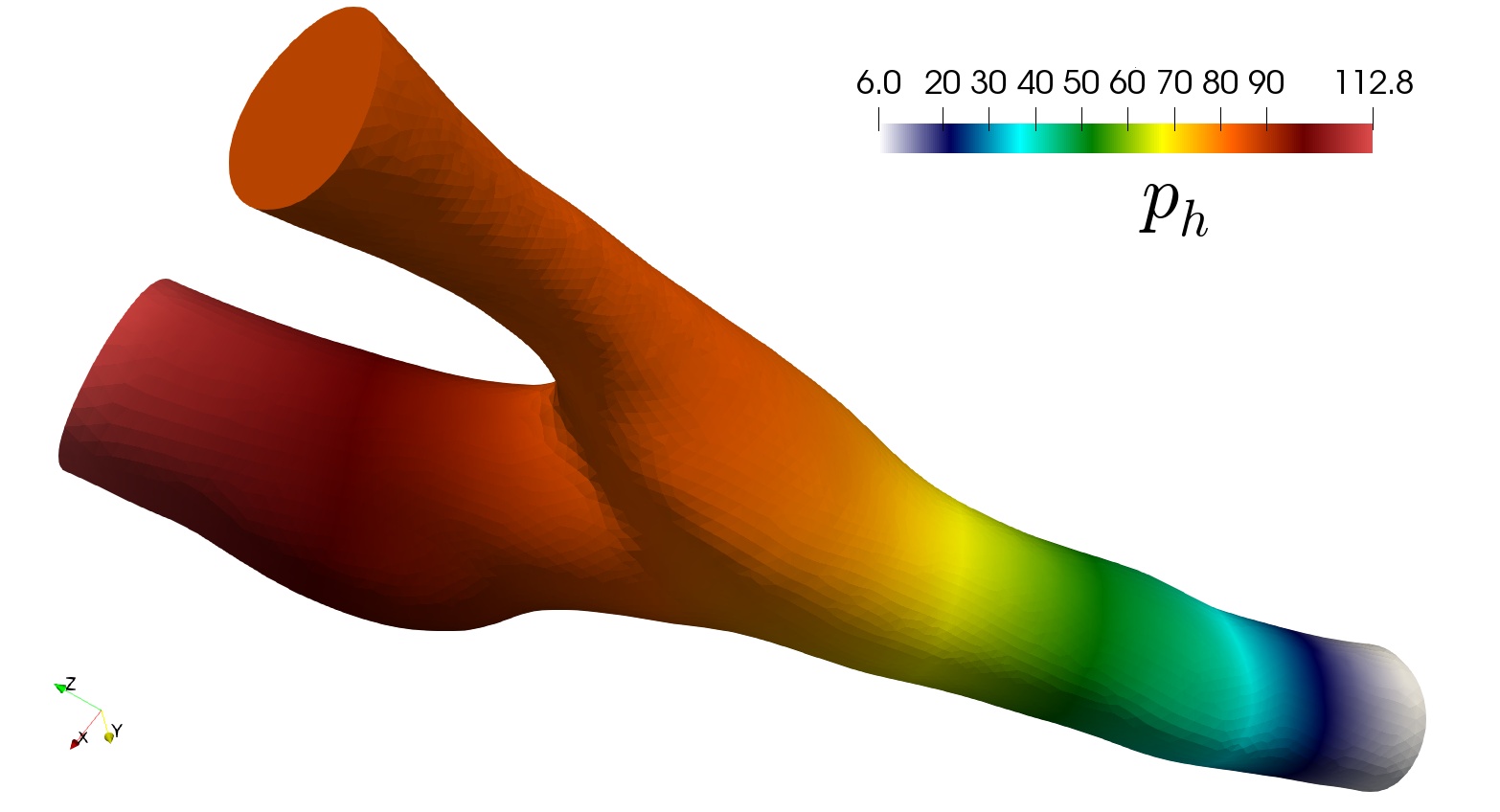}
\includegraphics[width=0.495\textwidth]{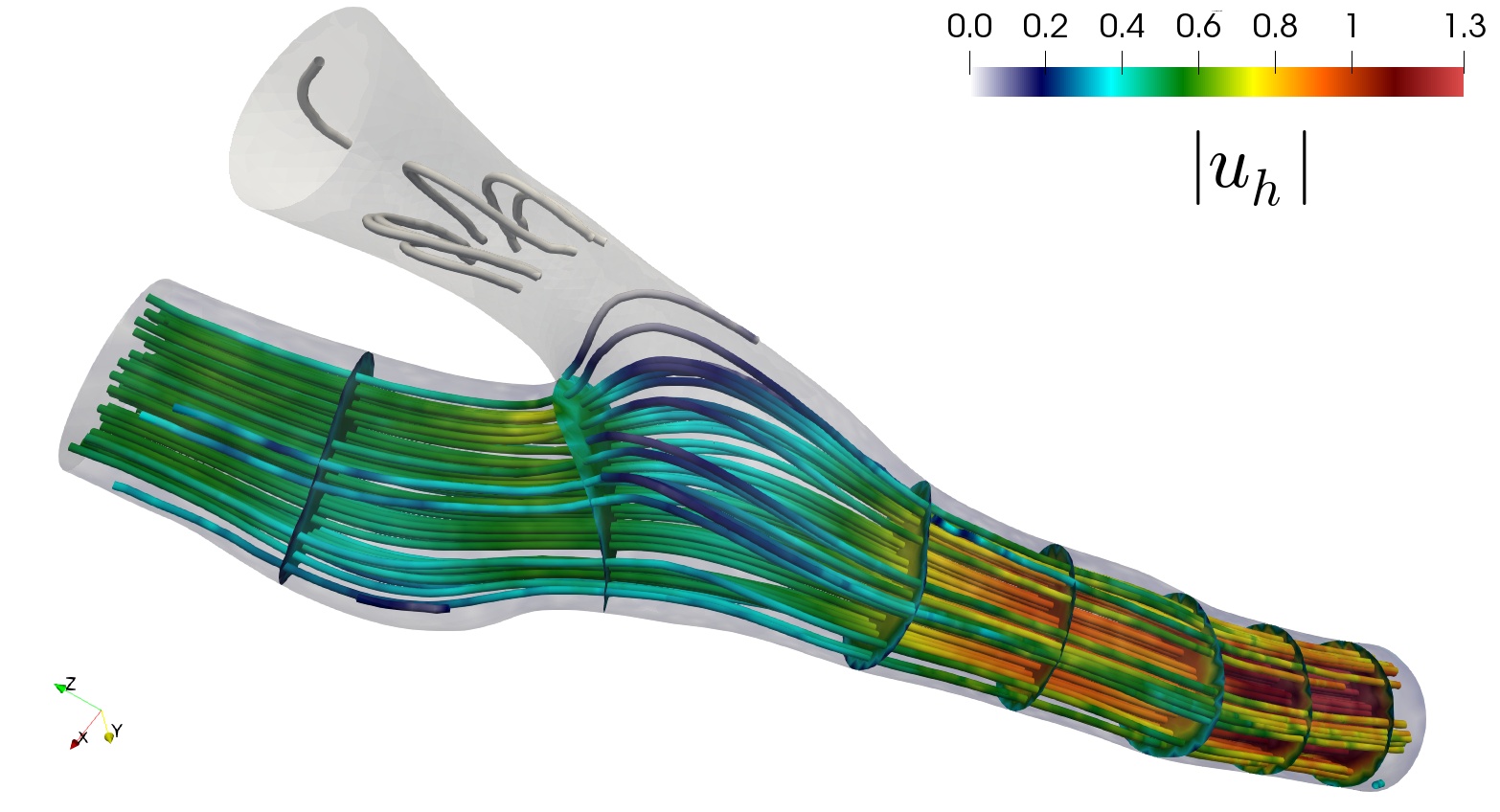}
\end{center}
\caption{Example 5. Bifurcation flow on a femoral bypass geometry. These computations were performed 
using our first-order scheme.}\label{ex04:fig}
\end{figure}

\small\paragraph{Acknowledgments.}
This work has been partially supported by DIUBB through projects 2020127 IF/R and 194608 GI/C, by CONICYT-Chile through the project AFB170001 of the PIA Program: Concurso Apoyo a Centros Cient\'ificos y Tecnol\'ogicos de Excelencia con Financiamiento Basal, and by the HPC-Europa3 Transnational Access programme.   

\end{document}